\documentclass[11pt]{amsart}
\usepackage[pdftex]{graphicx} 
\usepackage{color}
\usepackage[T1]{fontenc}
\usepackage{lmodern}
\usepackage[english]{babel} 
\usepackage[utf8]{inputenc}

\usepackage[centering,margin=2.5cm]{geometry}
\usepackage{upgreek}

\usepackage{amsfonts}
\usepackage{verbatim}
\usepackage{amsmath}
\usepackage[all, cmtip]{xy}\usepackage{amssymb}
\usepackage{amsthm}
\usepackage{extpfeil}
\usepackage{bbm}
\usepackage{tikz-cd}
\usepackage{paralist}
\usepackage[T1]{fontenc}
\usepackage[colorlinks=false,citecolor=magenta,pagebackref=true,urlcolor=magenta,pdftex]{hyperref}

\usepackage{amscd}

\usepackage{nicefrac}
\usepackage{microtype}
\usepackage{epstopdf}

\usepackage{mathrsfs}
\usepackage[font=small,labelfont=bf]{caption}
\usepackage[labelformat=simple]{subcaption}

\usepackage[nodisplayskipstretch]{setspace}
\setstretch{1.2}

\setlength{\oddsidemargin}{0in}
\setlength{\evensidemargin}{0in}
\setlength{\marginparwidth}{0in}
\setlength{\marginparsep}{0in}
\setlength{\marginparpush}{0in}
\setlength{\topmargin}{0.3in}
\setlength{\headsep}{14pt}
\setlength{\footskip}{.3in}
\setlength{\textheight}{8.0in}
\setlength{\textwidth}{6.3in}
\setlength{\parskip}{4pt}

\makeatletter
\newtheorem*{rep@theorem}{\rep@title}
\newcommand{\newreptheorem}[2]{%
\newenvironment{rep#1}[1]{%
 \def\rep@title{#2~\ref{##1}}%
 \begin{rep@theorem}}%
 {\end{rep@theorem}}}
 
\makeatother

\theoremstyle{plain}
\newtheorem*{thm*}{Theorem}
\newtheorem{thm}{Theorem}[section]
\newtheorem{mthm}[thm]{\bf Theorem}

\newtheorem{mcor}[thm]{\bf Corollary}
\newtheorem{mlem}[thm]{\bf Lemma}

\newreptheorem{mthm}{Theorem}
\newreptheorem{mcor}{Corollary}
\newreptheorem{mlem}{Lemma}
\newreptheorem{thm}{Theorem}

\newtheorem{cor}[thm]{Corollary}
\newtheorem{lem}[thm]{Lemma}
\newtheorem*{lem*}{Lemma}
\newtheorem{prp}[thm]{Proposition}

\newtheorem{prb}[thm]{Open Problem}

\theoremstyle{definition}
\newtheorem{dfn}[thm]{Definition}
\newtheorem{ex}[thm]{Example}

\newtheorem{rem}[thm]{Remark}

\newtheorem{rmk}[thm]{Remark}


\newcommand{\supp}{\mathrm{supp}}

\newcommand{\defeq }{\stackrel{\rm def}{=}}

\newcommand{\wh}{\widehat}
\newcommand{\wt}{\widetilde}


 \newcommand{\Z}{\mathbb{Z}}
 \newcommand{\R}{\mathbb{R}}



\newcommand{\BB}{\mathcal{B}}

\newcommand{\FF}{\mathcal{F}}

\newcommand{\LL}{\mathcal{L}}

\newcommand{\rarr}{\rightarrow}

\newcommand{\Lrarr}{\Longleftrightarrow}


\newcommand{\vanish}[1]{}


\newcommand{\De}{\Delta} 
\newcommand{\Ga}{\Gamma} 
 \newcommand{\la}{\lambda}


\def\om{\omega}

\def\si{\sigma}

\def\sbs{\subset}

\def\wh{\widehat}
\def\wt{\widetilde}

\def\langle{\left<}
\def\rangle{\right>}

\def\({\left(}
\def\){\right)}
\def\no={\,{\,|\!\!\!\!\!=\,\,}}
\def\wt{\widetilde}

\def\wh{\widehat}
\def\no={\,{\,|\!\!\!\!\!=\,\,}}

\def\rk{\mathrm{rk}}

\def\sbs{\subset}


\newcommand{\xqedhere}[2]{%
 \rlap{\hbox to#1{\hfil\llap{\ensuremath{#2}}}}}

\newcommand{\cm}[1]{}
\newcommand\mc[1]{\mathcal{#1}}
\newcommand\mbb[1]{\mathbb{#1}}
\newcommand\mbf[1]{\mathbf{#1}}
\newcommand\frk[1]{\mathfrak{#1}}
\newcommand\mr[1]{\mathrm{#1}}
\newcommand\ol[1]{\overline{#1}}

\newcommand{\longhookrightarrow}{\ensuremath{\lhook\joinrel\relbar\joinrel\longrightarrow}}

\newcommand{\bigslant}[2]{{\raisebox{.3em}{$#1$} \Big/ \raisebox{-.3em}{$#2$}}}
\renewcommand\emptyset{\varnothing} 

\newcommand\TR{\mathbb{T}}
\newcommand\TP{\mathbb{TP}}
\newcommand\tp{\, \oplus\, }
\newcommand\tm{\, \odot\, }

\newcommand\B{\frk{B}\, }
\newcommand\Fan{\frk{F}\, }

\newcommand\sed{\frk{s}\, }
\newcommand\Sed{\frk{S}\, }
\newcommand\mo{\frk{m}\, }

\newcommand{\NS}{\mc{NS}}
\newcommand\RR{\mathbb{R}}
\newcommand\NA{\mathbb{N}}

\DeclareMathOperator\TT{T}
\DeclareMathOperator\RN{N}

\newcommand\CO{\mc{C}\,}
\DeclareMathOperator\lin{lin}
\DeclareMathOperator\aff{aff}
\DeclareMathOperator\pos{pos}

\DeclareMathOperator{\Lk}{lk}
\DeclareMathOperator{\St}{st}

\DeclareMathOperator{\tT}{tT}

\newcommand{\RS}[2]{#1_{|#2}}
\newcommand{\rint}[1]{{#1}^\circ}


\title[Tropical Lefschetz Section Theorems]{Filtered geometric lattices and\\ Lefschetz Section Theorems over the tropical semiring}
\author{Karim~Adiprasito}
\author{Anders Bj\"orner}
\address{
Einstein Institute of Mathematics, Hebrew University of Jerusalem, Jerusalem, Israel}
\email{adiprasito@mail.huji.ac.il}
\address{Institut Mittag-Leffler, Djursholm,
Sweden 
\newline
and
\newline
Department of Mathematics, %
Kungliga Tekniska H\"ogskolan, Stockholm, %
Sweden}
\email{bjorner@kth.se}

\keywords{matroid, geometric lattice, poset topology, tropical geometry, smooth tropical variety, Lefschetz section theorem, stratified Morse theory, Hodge theory}
\subjclass[2010]{Primary 32S50, 05B35, 14T05; Secondary 06A11, 32S60, 58A14, 57Q05}
\date{\today}
\thanks{K.~Adiprasito acknowledges support by ISF Grant 1050/16 and ERC StG 716424 - CASe.
}
\thanks{A. Bj\"orner acknowledges support from the Swedish National Research Council (VR)
under Grant No. 2015-05308, from the Fondazione 
San Michele, and from the National 
Science Foundation under Grant No. DMS-1440140 while the author was in residence
 at MSRI, Berkeley, November 2013, and November 2017.}

\parindent=0pt
\parskip=5pt

\begin{document}
\begin{abstract}
The purpose of this paper is to establish analogues of the classical Lefschetz Section Theorem for smooth tropical varieties. 
We attempt to give a comprehensive picture of the Lefschetz Section Theorem in tropical geometry by deriving tropical analogues of many of the classical Lefschetz theorems and associated vanishing theorems. 

We start the paper by resolving a conjecture of Mikhalkin and Ziegler concerning the homotopy type of certain filtrations of geometric lattices, generalizing several known properties of full geometric lattices. This translates to a crucial index estimate for the stratified Morse data at critical points of the tropical variety. Finally, we extend this to prove a Lefschetz theorem for the Itenberg-Mikhalkin-Katzarkov-Zharkov tropical Hodge theory. The tropical varieties that 
we deal with are locally matroidal, and the Lefschetz theorems are shown to apply also in the case of nonrealizable matroids.



\end{abstract}
\maketitle

Tropical geometry is a relatively young field of mathematics, based on early work of, among many others, Bergman \cite{Bergman} and Bieri--Groves \cite{BieriGroves}. It arises as algebraic geometry over the tropical max-plus semiring $\TR=([-\infty,\infty),\max,+)$, and also as a limit of classical complex algebraic geometry. Since tropical varieties are, in essence, polyhedral spaces, tropical geometry naturally connects the fields of algebraic and combinatorial geometry, and combinatorial tools have proven essential for the study of tropical varieties.

Since its origins, tropical geometry has been extensively developed \cite{Gathmann, RGST, Speyer, SpeyerSturmfels}. It has been applied to classical algebraic geometry \cite{Gubler}, enumerative algebraic geometry \cite{KT, MG, MG2, Shustin}, mirror symmetry \cite{Gross, KoSo}, integrable systems \cite{TGaIS}, and to several branches of applied mathematics, cf.\ \cite{MP, TGI}. Several classical results in algebraic geometry have natural analogues in tropical geometry, see e.g.\ \cite{CDPR}.

The purpose of this paper is to provide tropical analogues and extensions of one of the most central results in algebraic geometry, the Lefschetz Section Theorem (or Lefschetz Hyperplane Theorem). This is motivated not so much by a desire to reprove classical algebrao-geometric theorems in a tropical setting, as
by the recent surge of interest in the algebraic geometry and topology of tropical and combinatorial varieties in the context of combinatorial Hodge theory \cite{AHK} 
and nonarchimedean spaces \cite{CLD, HrL}. This in turn is primarily fueled by the discovery that certain combinatorial structures satisfy laws of the Lefschetz and Hodge type, even though this is not predicted by the classical algebraic formalism. The results of the present paper provide prime examples of this phenomenon. 

Our aim is to present a comprehensive picture of the Lefschetz Section Theorem in tropical geometry, and to give tropical analogues of many of the classical Lefschetz theorems and associated vanishing theorems. It should be noticed,
however, that the results we obtain are so general that in some
cases it is a bit tricky to directly compare them to the classical setting, which will then appear as a distant inspiration.
Along the way, we build on and generalize significant results in the topological theory of posets and matroids of Rota, Folkman, Quillen
and others.

{\setcounter{tocdepth}{1}
\tableofcontents
\section*{The Tropical Lefschetz Section Theorems.} The classical Lefschetz Section Theorem comes in many different guises. Intuitively, Lefschetz theorems relate the topology of a complex algebraic variety $X$ to that of the intersection of $X$ with a hyperplane $H$ transversal to $X$ (or, in abstract settings, to an ample divisor $D$ of $X$). In its most classical form, it relates the homology groups of $X$ and $X\cap H$ \cite{Lefschetz, AndreottiFrankel} for smooth complex projective algebraic varieties. 

Since Lefschetz' pioneering work, many different variants of this important theorem were established. There are versions 
for affine varieties and projective varieties, for homology groups and homotopy groups, for Hodge groups and Picard groups, for constructible sheaves, and several more; compare \cite{GM-SMT, Lazarsfeld, Voisin}. In their dual formulations, Lefschetz theorems turn into vanishing theorems, such as the Andreotti--Frankel \cite{AndreottiFrankel}, Akizuki--Kodaira--Nakano \cite{AN} and Grothendieck--Artin \cite{Lazarsfeld} Vanishing Theorems.

In this paper we establish analogues in tropical geometry of several of the classical Lefschetz theorems. More precisely, we shall provide tropical analogues of
\begin{compactitem}[$\bullet$]
\item the Andreotti--Frankel Vanishing Theorem for affine varieties.
\item the classical Lefschetz Section Theorem for homology groups of projective varieties, due to Lefschetz and Andreotti--Frankel \cite{AndreottiFrankel, Lefschetz}.
\item the Bott--Milnor--Thom Lefschetz Section Theorem for homotopy groups and CW models of projective varieties \cite{Bott, Milnor}.
\item the Hamm--L{\^e} Lefschetz Section Theorem for complements of affine varieties \cite{HammLe}.
\item the Akizuki--Kodaira--Nakano Lefschetz Vanishing Theorem for Hodge groups \cite{AN, Voisin}.
\item the Kodaira--Spencer Lefschetz Section Theorem for Hodge groups \cite{KS}.
\end{compactitem}
An interesting feature of our theorems is that they apply even to situations that are not attainable as limits of the classical algebraic situation, to matroids that are not realizable over any field.
Compare this also to the Hard Lefschetz Theorem for matroids proven in \cite{AHK}.

\subsection*{Tropical Lefschetz Section Theorems for CW models, homotopy and homology.} The first section theorem of this paper is an analogue of the Andreotti--Frankel Vanishing Theorem \cite{AndreottiFrankel} for smooth tropical varieties. 
The \emph{tropical affine space} $\TR^d$ of dimension $d$ is the space $[-\infty,\infty)^d$. 
We use $\mathbb{A}^d$ to denote any one of $\TR^d$, $\TP^d$ or $\mathbb{R}^d$. In fact, our theorems extend further to closed polyhedra in $\mathbb{A}^d$, and in particular to the case of $\mathbb{A}^d$ a projective tropical toric variety, see \cite[Section 6]{ms}.

\begin{repmthm}{mthm:lef_t_trop_aff}
Let $X\subset \mathbb{A}^d$ be a smooth $n$-dimensional tropical variety, and let $H$ denote a chamber complex in $\mathbb{A}^d$. Then $X$ is, up to homotopy equivalence, obtained from $X\cap H$ by successively attaching $n$-dimensional cells. 

In particular, the inclusion $X\cap H \hookrightarrow X$ induces isomorphisms of homotopy groups resp.\ integral homology groups up to dimension $n-2$, and a surjection in dimension~$n-1$.
\end{repmthm}

Here, a {chamber complex} is a polyhedral complex that divides its ambient space into pointed polyhedra, that is, polyhedra that do not contain lines; the notion generalizes tropical hypersurfaces and hyperplanes (See Section~\ref{sec:pT}) for a complete definition). Contrary to the original treatment of Andreotti--Frankel, this result does not follow immediately from Lefschetz duality and the affine theorem, but rather from a common generalization of the affine and projective cases (Lemma~\ref{mlem:lef_to_convex_cell}).

\begin{rem}[Stable intersection] Theorem~\ref{mthm:lef_t_trop_aff}, as well as Theorem~\ref{mthm:proj_t_hodge_g} below, hold more generally also for \emph{stable intersections} of $X$ with a chamber complex, that is, limits of intersections $H+tv$ as $t$ tends to $0$, where $H$ is a chamber complex as above, $v$ is any generic vector, \emph{as long as this is well defined independently of $v$}. We denote this by
\[X\cap_{\mathrm{stable}} H \ \defeq \ \lim_{t\rightarrow 0}\ X\cap H+tv.\]
This is the case, for instance, if $H$ is the support of a tropical hypersurface, that is, admits a positive weight balancing with respect to some triangulation \cite{ms}. We shall indicate the proof of these versions when we come to the proof of the respective theorems, see Section~\ref{sec:stable}.
\end{rem}

\subsection*{Tropical Lefschetz Section Theorems for complements of tropical varieties.} Our reasoning extends to the complement of a tropical variety as well. This is analogous to the Hamm--L{\^e} Lefschetz theorems \cite{HammLe} for complements of algebraic hypersurfaces. 

\begin{repmthm}{mthm:left_comp_aff}
Let $X$ denote a smooth $n$-dimensional tropical variety in $\mathbb{A}^d$, and let $C=C(X)$ denote the complement of $X$ in $\mathbb{A}^d$. Let 
furthermore $H$ denote the closure of a real hyperplane in~$\mathbb{A}^d$
such that for any face $\sigma \in X$, $\aff \sigma$ intersects $H$ transversally or not at all. Then $C$ is, up to homotopy equivalence, obtained from $C\cap H$ by successively attaching $(d-n-1)$-dimensional cells. 

\end{repmthm}


The main tool to prove this result is the construction of an efficient Salvetti-type complex for complements of Bergman fans. In this connection we also   characterize homotopically the ``complement'' of a matroid,
see Corollary~\ref{mcor:Comp_Bergman_Fan}.

\subsection*{Tropical Lefschetz Section Theorems for  $(p,q)$-groups.} Finally, we provide a Lefschetz theorem for $(p,q)$-homology,
a concept introduced by Itenberg--Katzarov--Mikhalkin--Zharkov~\cite{IKMZ}, which can be seen as a tropical Hodge theory.

This is nontrivial: While the classical Lefschetz Section Theorem for Hodge groups of smooth algebraic projective varieties (due to Kodaira--Spencer \cite{KS}) does follow from the Lefschetz Section Theorem for complex coefficients and the Hodge Decomposition, this approach does not apply here. Nevertheless, the Lefschetz Section Theorem holds true for $(p,q)$-groups. 


\begin{repmthm}{mthm:proj_t_hodge_g}
Let $X$ denote an $n$-dimensional smooth tropical variety in $\mathbb{A}^d$, and let $H\subset \mathbb{A}^d$ denote an ample {{chamber complex}}. Then the inclusion $X\cap H\hookrightarrow X$ induces an isomorphism of $(p,q)$-homology
\[H_q(X\cap H; \FF_p (X \cap H))\ \longrightarrow\ H_q(X; \FF_p X) \]
for $p+q\le n-2$, and a surjection 
when $p+q=n-1$.
If $H$ has positive balancing (so that the stable intersection is well-defined), then we moreover have
\[H_q(X\cap_{\mathrm{stable}} H; \FF_p (X \cap_{\mathrm{stable}} H))\ \longrightarrow\ H_q(X; \FF_p X).\]
\end{repmthm}

We refer to Section~\ref{sec:pT} for the definition of ampleness in the tropical context. 

For the proof, instead of invoking a tropical Hodge Decomposition Theorem and Theorem~\ref{mthm:lef_t_trop_aff}, we establish a tropical analogue of the Akizuki--Kodaira--Nakano Vanishing Theorem, 
see Theorem~\ref{mthm:AKN_trop}.
The proof of the tropical Kodaira--Spencer Theorem can then be finished in a manner similar to the classical proof via the long exact sequence of Hodge groups, compare also \cite{AN, Voisin}. 

We also provide a counterexample to the integral version of the above theorem (see Section~\ref{ssc:intHodge}).

\section*{Filtered geometric lattices.} For the proofs of the tropical Lefschetz theorems, we shall critically use stratified Morse theory (\cite{GM-SMT}, see also our brief introduction to stratified Morse theory in Section~\ref{sec:polyhedra}). A crucial ingredient of the Morse-theoretic approach to classical Lefschetz Theorems are estimates on Morse indices at critical points, which follow easily from general considerations on Hessians of homogeneous complex polynomials, cf.\ \cite{AndreottiFrankel, Lazarsfeld, Milnor}. 

In our setting, we analogously estimate the topological changes in the sublevel sets with respect to some smooth Morse function, interpreting the tropical variety as a Whitney stratified space. This requires
of  us to verify a conjecture of Mikhalkin and Ziegler \cite{MZ} 
concerning topological properties of {geometric lattices}. The terminology and notation used here is explained in Section~\ref{sec:pos_sum}.

\begin{repmthm}{mthm:pos_sum_geom_latt_cm}
Let $\LL$ denote the lattice of flats of a matroid on ground set $[n]$ and of rank $r\ge 2$, and let $\omega$ denote a generic weight on its atoms. Let $t$ denote any real number with $t\le \min\{0,\omega\cdot [n]\}$. Then $\LL^{>t}$ is homotopy Cohen--Macaulay of dimension $r-2$. In particular, it is $(r-3)$-connected.
\end{repmthm}

For the proof, we rely on lexicographic shellability and a generalization of Quillen's fiber lemma.

Of geometric interest is that this result implies a topological characterization of half-links of a smooth tropical variety at critical points. 

Of combinatorial interest is that Theorem~\ref{mthm:pos_sum_geom_latt_cm} gives deeper information about the structure of geometric lattices,
generalizing earlier characterizations of the topological type of 
the full geometric lattice $\LL$, the homology version of which goes back to work of Folkman
\cite{Folkman}, inspired by work
of Rota \cite{Rota} on the M\"obius function of geometric lattices.
A stronger version concerning shellability, and therefore also homotopy equivalence,
was later proved by Bj\"orner \cite{Bj1}. 
In Section \ref{ssc:crime}, we illustrate the combinatorial content of  Theorem~\ref{mthm:pos_sum_geom_latt_cm} 
by a mock application to graph-connectivity and crime-prevention.

Other than for full geometric lattices, Theorem~\ref{mthm:pos_sum_geom_latt_cm} was previously known for Boolean lattices \cite{Cortona} (equivalently: free matroids), for lattices of rank $3$ \cite{PZ}, and also for the case when the weight $\omega$
has only one negative entry (this is implied by a result of
Wachs and Walker \cite{WachsWalker}).


\section*{Plan for the paper.}

In Section~\ref{sec:pos_sum}
we prove our main theorem on the homotopy Cohen--Macaulayness of filtered geometric lattices, using methods from poset topology. Also, in Remark 
3.2 we sketch an alternative proof based on the combinatorial Morse Theory of Forman \cite{FormanADV}.

In Sections~\ref{sec:cell},~\ref{sec:compl} and~\ref{sec:hodge}
we apply our results to deriving Lefschetz Theorems for tropical varieties. In each section, we first review a classical Lefschetz theorem, then proceed to give a tropical analogue.

In the remaining sections we review and extend required background information,
on combinatorial, cellular and poset topology, geometry and combinatorics of polyhedral complexes, tropical geometry, $(p,q)$-homology theory, and stratified Morse theory. While the main purpose for this material
is to provide a carefully laid foundation for the proofs of the main results, it is also meant to provide a coherent presentation of the material.

\noindent \emph{Acknowledgement.} We are grateful to G. Mikhalkin and G. M. Ziegler for communicating
their conjecture and for pointing out its relevance for tropical geometry, and to 
I.\ Zharkov for valuable comments. We also thank Johannes Rau for reminding us to add the case of stable intersections.

\part{Filtered Geometric lattices}

\label{sec:app} 
\section{Some basic combinatorial topology.}\label{ssc:ct} We recall some basic facts from algebraic topology and the topology of posets. The reader is referred to \cite{munkres}, \cite{Hatcher}, \cite{BTM} and \cite{Whitehead} for more details. All topological spaces have the homotopy type of simplicial complexes and, in particular, always have a CW decomposition.

\subsection{Polyhedral spaces and complexes.}\label{sec:11} A \emph{(closed) polyhedral complex} in $\RR^d$ is a finite collection of polyhedra in $\RR^d$ such that the intersection of any two polyhedra is a face of both, and that is closed under passing to faces of the polyhedra in the collection. The elements of a polyhedral complex are called \emph{faces}, and the inclusion-wise maximal faces are the \emph{facets} of the polyhedral complex. A polyhedral complex is \emph{bounded} if and only if all polyhedra are bounded, i.e.,\ if they are polytopes. Finally, a \emph{polyhedral fan} is a polyhedral complex all whose faces are polyhedra pointed at $\mbf{0}$, that is, their only vertex is the vertex at $\mbf{0}$. A \emph{pointed polyhedron} in general is a polyhedron that does not contain a line.
 We denote by
$\pos X$ the positive span of a subset $X$ of $\RR^d$, and $\lin X$ resp.\ $\aff X$ its linear and affine span, respectively.

The \emph{underlying (polyhedral) space} $|X|$ of a polyhedral complex $X$ is the union of its faces. With abuse of notation, we often speak of the polyhedral complex when we actually mean its underlying space;
for instance, for a subcomplex $Y \sbs X$
we write  simply $X \setminus Y$ for $| X | \setminus | Y |$.

If $\Ga^0$ resp.\ $\De^0$ denotes the vertex set of simplicial 
complexes $\Ga$ resp.\ $\De$, then we write $\De - \Ga$ for the subcomplex of $\De$ induced on
the vertex set $\De^0 \setminus \Ga^0$, and call this operation the \emph{deletion} of $\Ga$ from $\De$.

For polyhedral complexes, we define the \emph{combinatorial restriction} $\RS{X}{M}$ of a polyhedral complex $X$ to a set $M$ to be the inclusion-wise maximal subcomplex $Y$ of $X$ such that $Y\subset M$. We write $X-M$ for the subcomplex of $X$ given by $\RS{X}{X{{{\setminus}}} M}$ and call this operation the \emph{deletion} of $M$ from $X$.

If $X$ and $Y$ are two polyhedral complexes with the same underlying space, then $Y$ is called a \emph{refinement} or \emph{subdivision} of $X$ if every face $y$ of $Y$ is contained in some face $x$ of $X$. Similarly, for polyhedral complexes $X$, $Y$ we define the \emph{common refinement} $X\cdot Y$ as the polyhedral complex $\{x\hspace{0.08 em}\cap \hspace{0.08 em} y: x\in X,\ y\in Y\}$, so that in particular $|X\cdot Y|=|X|\cap |Y|$.

\subsection{Acyclicity, Connectivity and Cohen-Macaulayness.} A topological space $X$ is said to be \emph{$k$-connected} if one of the following equivalent conditions holds:
\begin{compactitem}[$\bullet$]
\item $\pi_i(X)=0$ for all $i\le k$, i.e., every map of 
the sphere $S^i$, $i\le k$, into $X$ is null-homotopic,
\item $X$ is homotopy equivalent to a CW complex that, except for the basepoint, has no cells of dimension~$\le k$.
\end{compactitem}
Similarly, a pair of topological spaces $(X,Y)$ is \emph{$k$-connected} 
if $\pi_i(X,Y)=0$ for all $i\le k$.

A space $X$ is \emph{$k$-acyclic} if $\widetilde{H}_i(X; \mathbb{Z})=0$ for all $i\le k$,
and a pair of spaces $(X,Y)$ is \emph{$k$-acyclic} if $\widetilde{H}_i(X,Y; \mathbb{Z})=0$
for all $i\le k$. 
By elementary cellular homology, every {$k$-connected} space is $k$-acyclic. We will repeatedly make use of the fact that by the theorems of Whitehead and Hurewicz (see for instance \cite[Section~4]{Hatcher}), a {$k$-acyclic} space (or pair of spaces), $k\ge 1$, is {$k$-connected} if and only if it is $1$-connected.


A pure simplicial complex (that is, a simplicial complex all whose facets are of the same dimension) $\Delta$ of dimension $d-1$ is \emph{homotopy Cohen--Macaulay} if any of the following equivalent conditions holds 
\begin{compactitem}[$\bullet$]
\item for all faces $\sigma$ in $\Delta$, the link $\Lk_\sigma \Delta$ is $(d-\dim \sigma-3)$-connected.
\item for all faces $\sigma$ in $\Delta$, the link $\Lk_\sigma \Delta$ is homotopy equivalent to a wedge of $(d-\dim \sigma-2)$-dimensional spheres.
\end{compactitem}
Here the empty set is considered to be a $(-1)$-dimensional 
face, and $\Lk_\emptyset \Delta=\Delta$. 

A pure $(d-1)$-dimensional simplicial complex $\Delta$ is \emph{Cohen--Macaulay over} $\Z$
if $ \wt{H}_{i}(\Lk_\sigma \Delta; \Z)=0$
for all faces $\sigma\in\Delta$ and all $i\le d-\dim \sigma-3$.
Being Cohen--Macaulay over ${\mathit R}$ is similarly defined for other coefficient rings ${\mathit R}$.
See \cite{SCCA} for some of the algebraic ramifications of this concept.

\subsection{Elementary cellular topology.} 

We now recall three well-known results in combinatorial topology.
The first one concerns the relation of the set $\De\setminus\Ga$ 
and the complex $\De-\Ga$ for simplicial complexes $\De$ and $\Ga$,
as defined in Section~\ref{sec:11}.
 
\begin{lem}\label{lem:deletion_in_simplicial}
Let $\De$ denote a simplicial complex, and let $V$ be a subset of its vertex set. If $\Ga$ denotes the subcomplex of $\De$ 
induced on vertex set $V$, then the set $\De\setminus \Ga$ deformation retracts to the subcomplex $\Delta - \Ga$ induced on $\De^0\setminus V$.
 \qed
\end{lem}

We use $A \ast B$ to denote the join of two topological spaces 
$A$ and  $B$ (or CW complexes, or simplicial complexes), and $\CO X \defeq  \{\text{point}\}\ast X$ to denote the (abstract) cone over a topological space $X$.

\begin{lem}\label{lem:join}
Let $\De$, $\Ga$ denote two topological spaces that are $k$-connected and $\ell$-connected, respectively. Then $\De\ast \Ga$ is $(k+\ell+2)$-connected.
\end{lem}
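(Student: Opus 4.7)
The plan is to decompose the join as a homotopy pushout, compute its reduced homology via Mayer--Vietoris and K\"unneth, and then combine acyclicity with simple connectivity (via Seifert--van Kampen) to invoke the Hurewicz/Whitehead criterion already recalled in the excerpt. First, I realize $\Delta \ast \Gamma$ as $(\Delta \times \Gamma \times [0,1])/{\sim}$, with the endpoint slices at $t=0$ and $t=1$ collapsed onto $\Delta$ and $\Gamma$ respectively, and cover it by the open sets $U = \{t < 2/3\}$ and $V = \{t > 1/3\}$. Standard deformation retractions give $U \simeq \Delta$, $V \simeq \Gamma$, and $U \cap V \simeq \Delta \times \Gamma$, with the inclusions $U \cap V \hookrightarrow U$ and $U \cap V \hookrightarrow V$ corresponding up to homotopy to the two coordinate projections $\Delta \times \Gamma \to \Delta$ and $\Delta \times \Gamma \to \Gamma$.

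Next, I feed this cover into the reduced Mayer--Vietoris sequence
\[
\cdots \to \widetilde H_n(\Delta \times \Gamma) \to \widetilde H_n(\Delta) \oplus \widetilde H_n(\Gamma) \to \widetilde H_n(\Delta \ast \Gamma) \to \widetilde H_{n-1}(\Delta \times \Gamma) \to \cdots
\]
and combine it with the K\"unneth splitting $\widetilde H_m(\Delta\times\Gamma) \cong \widetilde H_m(\Delta) \oplus \widetilde H_m(\Gamma) \oplus C_m$, where $C_m$ collects the cross-terms $H_i(\Delta) \otimes H_j(\Gamma)$ with $i+j=m$ together with the Tor terms $\mathrm{Tor}(H_i(\Delta), H_j(\Gamma))$ with $i+j=m-1$, all subject to $i, j \ge 1$. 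Since the two restrictions to $U$ and $V$ are homotopic to the coordinate projections, the Mayer--Vietoris map $\widetilde H_n(\Delta\times\Gamma) \to \widetilde H_n(\Delta) \oplus \widetilde H_n(\Gamma)$ is projection onto the first two K\"unneth summands, hence surjective with kernel exactly $C_n$. The long exact sequence therefore collapses to isomorphisms $\widetilde H_n(\Delta \ast \Gamma) \cong C_{n-1}$. Because $\Delta$ is $k$-connected and $\Gamma$ is $\ell$-connected, $H_i(\Delta) = 0$ for $i \le k$ and $H_j(\Gamma) = 0$ for $j \le \ell$, so $C_m$ vanishes for $m \le k + \ell + 1$; hence $\widetilde H_n(\Delta \ast \Gamma) = 0$ for all $n \le k + \ell + 2$, i.e.\ the join is $(k+\ell+2)$-acyclic.

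To upgrade acyclicity to connectivity, I apply Seifert--van Kampen to the same cover: both coordinate projections $\pi_1(\Delta) \times \pi_1(\Gamma) \to \pi_1(\Delta)$ and $\pi_1(\Delta) \times \pi_1(\Gamma) \to \pi_1(\Gamma)$ are surjective, so the amalgamated product computing $\pi_1(\Delta \ast \Gamma)$ collapses to the trivial group and $\Delta \ast \Gamma$ is simply connected (degenerate cases where one of the factors is a point or empty are handled directly). Simple connectivity together with $(k+\ell+2)$-acyclicity then yields $(k+\ell+2)$-connectivity via the Hurewicz and Whitehead theorems already invoked at the start of this section.

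The only delicate point is the identification of the Mayer--Vietoris map on $\widetilde H_n(\Delta\times\Gamma)$ with the projection onto the first two K\"unneth summands, since this pins down the kernel; the remaining steps are formal manipulations of long exact sequences. A slicker but less self-contained alternative would be to invoke the classical homotopy equivalence $\Delta \ast \Gamma \simeq \Sigma(\Delta \wedge \Gamma)$ valid for well-pointed CW complexes: a minimal CW model of the smash product $\Delta \wedge \Gamma$ has its basepoint as the only cell in dimension below $k + \ell + 2$, exhibiting $\Delta \wedge \Gamma$ as $(k+\ell+1)$-connected, with the extra degree of connectivity supplied by the suspension.
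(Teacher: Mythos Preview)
Your argument is correct and follows essentially the same route as the paper: decompose the join into two open pieces homotopy equivalent to $\Delta$ and $\Gamma$ with intersection $\Delta\times\Gamma$, apply Mayer--Vietoris together with K\"unneth to obtain $(k+\ell+2)$-acyclicity, and finish with Hurewicz/Whitehead. The only difference is one of detail---you spell out the identification of the Mayer--Vietoris map with the K\"unneth projection and establish simple connectivity via Seifert--van Kampen, whereas the paper leaves both steps implicit.
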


\begin{proof}
Let us consider the spaces $\De'\defeq \De\ast \Ga{{\setminus}} \Ga$ and $\Ga'\defeq \De\ast \Ga{{\setminus}} \De$. Then we have homotopy equivalences $\De'\simeq \De$, $\Ga'\simeq \Ga$ and $\De'\cap \Ga'\simeq \De\times \Ga$. By considering the Mayer--Vietoris sequence for $\De'$ and $\Ga'$, together with the K\"unneth formula, we see that $\De\ast \Ga$ is $(k+\ell+2)$-acyclic. The claim follows from the Whitehead and Hurewicz Theorems.
\end{proof}

If $X$ is any topological space, and $Y\subset X$ is any subspace, then we say that $X$ \emph{is obtained from} $Y$ by attaching an $i$-cell if $X$ can, up to homotopy equivalence, be decomposed as the union 
\[\bigslant{Y\cup e}{\alpha(x)\sim x}\]
where $e$ is an $i$-cell and $\alpha$ is a map $\alpha:\partial e\rightarrow Y$.

\begin{lem}\label{lem:lk}
Let $\De$ denote a polytopal complex, and let $\sigma$ be any $\ell$-cell of $\De$. If $\Lk_\sigma \Delta$ is $k$-connected, then $\De$ is, up to homotopy equivalence, obtained from $\De-\sigma$ by successively attaching cells of dimension $\ge k+\ell+2$.
\end{lem}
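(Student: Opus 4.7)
The plan is to realize $\Delta$, up to homotopy, as obtained from $\Delta - \sigma$ by coning off a small neighborhood of the open star of $\sigma$, and then to read off the cellular attachments from the connectivity of that neighborhood.

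First, I would decompose $|\Delta|$ as the union of $|\Delta - \sigma|$ and the closed star $|\St_\sigma \Delta|$. By the polytopal analogue of Lemma~\ref{lem:deletion_in_simplicial}, the intersection $|\St_\sigma\Delta|\cap|\Delta-\sigma|$ deformation retracts onto the ``boundary join'' $\partial\sigma\ast\Lk_\sigma\Delta$, and the closed star itself, being (up to homotopy) the join $\sigma\ast\Lk_\sigma\Delta$ with the contractible cell $\sigma$, is contractible. Consequently $\Delta$ is homotopy equivalent to the mapping cone of the inclusion
\[\partial\sigma\ast\Lk_\sigma\Delta\ \longhookrightarrow\ \Delta-\sigma,\]
and it suffices to show this mapping cone can be built from $\Delta-\sigma$ using cells of dimension $\ge k+\ell+2$.

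To that end, I would apply Lemma~\ref{lem:join}: since $\partial\sigma\cong S^{\ell-1}$ is $(\ell-2)$-connected and $\Lk_\sigma\Delta$ is $k$-connected by hypothesis, the join $\partial\sigma\ast\Lk_\sigma\Delta$ is $(k+\ell)$-connected. By CW approximation it is therefore homotopy equivalent to a CW complex with a single $0$-cell and all further cells of dimension $\ge k+\ell+1$; passing to the mapping cone raises each such dimension by one, so $\Delta$ is indeed obtained from $\Delta-\sigma$ by successively attaching cells of dimension $\ge k+\ell+2$, as required.

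The main obstacle I anticipate is purely technical, namely justifying the homotopy identifications of the closed star with $\sigma\ast\Lk_\sigma\Delta$ and of its intersection with $\Delta-\sigma$ with $\partial\sigma\ast\Lk_\sigma\Delta$ in the polytopal (rather than strictly simplicial) setting; one can either pass to the barycentric subdivision to reduce to the simplicial case, or use a PL normal disk to $\sigma^\circ$. Once this identification is in place, everything else is a formal application of Lemma~\ref{lem:join} together with CW approximation.
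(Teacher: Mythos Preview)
Your argument is correct and follows essentially the same route as the paper's proof. The paper first performs a stellar subdivision at $\sigma$ to reduce to the case of a vertex $v$, whose link in the subdivision is precisely $\partial\sigma\ast\Lk_\sigma\Delta$; it then replaces that link by a CW model with no cells below dimension $k+1$ (using a mapping cylinder to keep the attaching maps honest) and cones it off. You skip the stellar-subdivision step and work directly with the join $\partial\sigma\ast\Lk_\sigma\Delta$, invoking Lemma~\ref{lem:join} to get $(k+\ell)$-connectivity, and then take the mapping cone; this is the same argument, just organized slightly differently.
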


\begin{proof} By a stellar subdivision at $\sigma$ and Lemma~\ref{lem:join}, it suffices to address the 
case $\ell=0$, i.e.,the case when $\sigma=v$ is a vertex. We can furthermore assume that $k\ge 0$, since the claim is trivial otherwise.

Let $K$ denote a CW complex homotopy equivalent to $\Lk_v \Delta$ and constructed so that it has no nontrivial cells of dimension $\le k$.
Let $f: K\rightarrow\Lk_v \Delta$ denote a map realizing the homotopy equivalence 
$K \simeq \Lk_v \Delta$, and let \[M_f=\bigslant{K\times [0,1]\cup\Lk_v \Delta}{(x,0)\sim f(x)}\] denote its mapping cylinder. Then $\De$ is homotopy equivalent to 
\[\bigslant{((\De-v)\cup M_f) \cup \CO (K)}{x\in \partial (\CO (K)) \sim (x,1)}\]
Now, if $c$ is any nontrivial cell of $\partial (\CO (K))$, then $\CO (c)$ is a disk in $\CO (K)$ of dimension $\ge k+2$ (since $c$ is a cell of dimension $\ge k+1$). Since all nontrivial cells are of this form, the claim follows.
\end{proof}

\subsection{Topology of posets.} 
Posets $\mc{P}$ are interpreted topologically via their \emph{order complex} $\Delta(\mc{P})$,
whose faces are the totally ordered subsets (chains) of $\mc{P}$.
Here, $\Delta(\cdot)$ is usually suppressed from the notation. 
For instance, for a $(d-1)$-dimensional homotopy Cohen--Macaulay poset $\mc{P}$
as above, we have $\mc{P} \simeq \bigvee {S}^{d-1}$.
 As a general reference for poset topology, see \cite{BTM}.

A well-known consequence of Lemma~\ref{lem:join} (see e.g.\ \cite{quillen, BTM}) is
that a poset is Cohen--Macaulay (resp.\ homotopy CM) if and only if its intervals of length $k$
are $(k-1)$-acyclic (resp.\ $(k-1)$-connected) for all $k$. 
 
For a poset $\mc{P}$ and two comparable elements $a, b \in \mc{P}$, 
we have the \emph{interval}
$\mc{P}_{[a,b]}\defeq  \{y\in\mc{P} :a\le y\le b\}$ (and similarly for open and half-open
intervals). We also have the \emph{lower} (resp.\ \emph{upper}) \emph{ideal} $\mc{P}_{\le x}\defeq \{y\in\mc{P} :y\le x\}$ (resp.\ $\mc{P}_{\ge x}\defeq \{y\in\mc{P} :y\ge x\}$) of an element $x\in \mc{P}$. Upper ideals are also called \emph{order filters} in the literature.

An order-preserving map $f: \mc{P} \rarr \mc{P} $ is called a \emph{closure operator} if 
$x\le f(x)=f^2 (x)$ for all $x\in \mc{P} $. 
A closure operator is a strong deformation retract, see e.g. \cite[p.~1852]{BTM} or the proof of Lemma~\ref{mlem:quillen} below.
A concrete example of a closure operator that plays a role in this paper is
the closure map of a matroid, sending an arbitrary set of points to 
the smallest closed set containing it. A homotopy inverse 
of this closure operator is
the identity map, sending a closed set to itself.

The following version of Quillen's ``Theorem A'',
slightly more general than what can be found in the literature, cf.\ \cite{quillen, NFH, BWW}, is a central tool for our line of reasoning. 

\begin{lem}\label{mlem:quillen}
Let $\mc{P}$, $\mc{Q}$ be two posets, and $\varphi:\mc{P}\rightarrow \mc{Q}$ an
order-preserving map. 
Assume that for every $x\in \mc{Q}$, the fiber $\varphi^{-1}(\mc{Q}_{\le x})$ is $m_x$-connected and the upper ideal $\mc{Q}_{> x}$ is 
$\ell_x$-connected, and let \[k\defeq  \min_{x\in \mc{Q}} (m_x+\ell_x)+2.\] Then 
$\mc{Q}$ is, up to homotopy equivalence, obtained from $\mc{P}$ by attaching cells of dimension $\ge k+2$.

Consequently,
\begin{compactenum}[\rm (1)]
\item $\varphi$ induces isomorphisms of homotopy groups up to dimension $k$,
and a surjection in dimension $k+1$.
\item $\mc{P}$ is $k$-connected if and only if 
$\mc{Q}$ is $k$-connected.
\end{compactenum}
\end{lem}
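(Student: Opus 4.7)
The approach is to replace $\varphi$ by a simplicial mapping cylinder at the poset level, and then to build $\mc{Q}$ from $\mc{P}$ by attaching the elements of $\mc{Q}$ one at a time in a carefully chosen order; at each step Lemma~\ref{lem:join} will control the connectivity of the relevant link, and Lemma~\ref{lem:lk} will convert this into a bound on cell dimensions.

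Concretely, I would introduce the mapping cylinder poset $\widetilde{\mc{M}}=\widetilde{\mc{M}}(\varphi)$ on the underlying set $\mc{P}\sqcup\mc{Q}$, retaining the given orders on $\mc{P}$ and on $\mc{Q}$ and declaring $p<x$ whenever $p\in\mc{P}$, $x\in\mc{Q}$, and $\varphi(p)\le x$. Because $\varphi$ is order-preserving, this relation is transitive. The order complex $\Delta(\widetilde{\mc{M}})$ contains both $\Delta(\mc{P})$ and $\Delta(\mc{Q})$ as full subcomplexes, and the natural simplicial map $\Delta(\varphi)$ factors up to homotopy as $\Delta(\mc{P})\hookrightarrow \Delta(\widetilde{\mc{M}})\to \Delta(\mc{Q})$, where the second arrow is the strong deformation retract associated with the closure operator $c\colon \widetilde{\mc{M}}\to\widetilde{\mc{M}}$ given by $c|_{\mc{P}}=\varphi$ and $c|_{\mc{Q}}=\mathrm{id}$. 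It therefore suffices to show that $\Delta(\widetilde{\mc{M}})$ is obtained from $\Delta(\mc{P})$ by attaching cells of dimension $\ge k+2$.

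To this end, I would fix a \emph{reverse} linear extension $x_1,\ldots,x_N$ of $\mc{Q}$, so that $x_i<x_j$ in $\mc{Q}$ forces $j<i$, and set $\widetilde{\mc{M}}_i\defeq \mc{P}\cup\{x_1,\ldots,x_i\}$ as a subposet of $\widetilde{\mc{M}}$. When the vertex $x_i$ is adjoined to $\widetilde{\mc{M}}_{i-1}$, every strictly greater element is some $x_j$ with $j<i$ (already present), so the upper half of the vertex link is exactly $\mc{Q}_{>x_i}$; every strictly smaller element must lie in $\mc{P}$, because by the choice of enumeration no $x_j<x_i$ has yet been inserted, so the lower half of the link is exactly $\varphi^{-1}(\mc{Q}_{\le x_i})$. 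Hence the link of $x_i$ in $\Delta(\widetilde{\mc{M}}_i)$ is the join $\Delta(\varphi^{-1}(\mc{Q}_{\le x_i}))\ast \Delta(\mc{Q}_{>x_i})$. By Lemma~\ref{lem:join} this join is $(m_{x_i}+\ell_{x_i}+2)$-connected, and in particular at least $k$-connected, so Lemma~\ref{lem:lk} applied with $\sigma=x_i$ and $\ell=0$ produces $\Delta(\widetilde{\mc{M}}_i)$ from $\Delta(\widetilde{\mc{M}}_{i-1})$ by attaching cells of dimension $\ge k+2$.

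Iterating this for $i=1,\ldots,N$ proves the main assertion, and statements~(1) and~(2) then follow from the standard cellular homotopy fact that attaching cells of dimension $\ge k+2$ leaves $\pi_i$ unchanged for $i\le k$ and merely passes to a quotient on $\pi_{k+1}$, applied to the natural map $\Delta(\mc{P})\to\Delta(\mc{Q})$. The only genuinely delicate point of the argument is the choice of enumeration: without the reverse linear extension, the downset of $x_i$ in $\widetilde{\mc{M}}_i$ would mix elements of $\mc{P}$ and $\mc{Q}$, and the link would no longer split as a clean join, which is precisely what makes Lemma~\ref{lem:join} directly applicable; everything else is routine poset bookkeeping.
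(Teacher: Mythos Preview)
Your proof is correct and essentially identical to the paper's: both construct the poset mapping cylinder $M_\varphi$ on $\mc{P}\sqcup\mc{Q}$, identify it with $\mc{Q}$ up to homotopy, and then pass between $\mc{P}$ and $M_\varphi$ by processing the elements of $\mc{Q}$ one at a time so that at each step the link of the element being added/removed splits as $\varphi^{-1}(\mc{Q}_{\le x})\ast\mc{Q}_{>x}$, to which Lemmas~\ref{lem:join} and~\ref{lem:lk} apply. The only cosmetic difference is that the paper removes minimal elements of $\mc{Q}$ from $M_\varphi$ (top-down deletion) whereas you add maximal elements of $\mc{Q}$ to $\mc{P}$ (bottom-up insertion via a reverse linear extension); the resulting filtrations are literally the same sequence of subposets traversed in opposite directions.
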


\begin{proof}
Let us consider the poset $M_\varphi$ whose ground set is the disjoint union of the elements of $\mc{P}$ and $\mc{Q}$, and where we define
\begin{compactitem}[$\bullet$]
	\item for $q, q'\in \mc{Q}\subset M_\varphi$: we have $q\le q'$ in $M_\varphi$ if and only if $q\le q'$ in $\mc{Q}$, 
	\item for $p, p'\in \mc{P} \subset M_\varphi$: we have $p\le p'$ in $M_\varphi$ if and only if $p\le p'$ in $\mc{P}$, and
	\item for $p \in \mc{P} \subset M_\varphi$ and $q\in \mc{Q}\subset M_\varphi$: we have $p\le q$ in $M_\varphi$ if and only if $\varphi(p)\le q$ in $\mc{Q}$.
\end{compactitem}
The poset 
$M_\varphi$ triangulates the mapping cylinder of $\varphi$, and therefore strongly deformation retracts to $\mc{Q}$. 
Let $\wt{\varphi}$ denote the inclusion map $\mc{P}\hookrightarrow M_\varphi$. Then for every $x\in \mc{Q}\subset M_\varphi$ we have the isomorphisms
\[\varphi^{-1}(\mc{Q}_{\le x})\cong \wt{\varphi}^{-1}(\mc{Q}_{\le x}) \quad \text{and} \quad \mc{Q}_{> x}\cong (M_\varphi)_{> x}.\]

The key observation now is that
we can obtain $\mc{P}$ from $M_\varphi$ by removing the elements of $\mc{Q}\subset M_\varphi$ 
one by one, until only $\mc{P}$ is left. We do so in an increasing fashion, removing the elements from bottom to top. 

To make this precise, let $\mc{I}$ denote any poset $\mc{P}\subsetneq \mc{I}\subset M_\varphi$. Let furthermore $\mu$ denote a minimal element of $\mc{I}{{\setminus}} \mc{P}$, such that $\mc{I}_{\ge \mu}=(M_\varphi)_{\ge \mu}=\mc{Q}_{\ge \mu}$: no element greater than $\mu$ has yet been deleted from $(M_\varphi)_{\ge \mu}$. 

Now $\Lk_\mu \mc{I}\cong \mc{Q}_{>\mu}\ast {\varphi}^{-1}(\mc{Q}_{\le \mu})$ is $k$-connected by assumption and Lemma~\ref{lem:join}. Hence $\mc{I}$ is obtained from $\mc{I}{\setminus}\{\mu\}$ by successively attaching cells of dimension $\ge k+2$, by Lemma~\ref{lem:lk}. By extension, $M_\varphi\simeq \mc{Q}$ is obtained from $\mc{P}$ by successively attaching cells of dimension $\ge k+2$. The first claim follows,
and this implies the other two.
\end{proof}


\section{Filtered geometric lattices}\label{sec:pos_sum}
This section is devoted to proving the conjecture of Mikhalkin and Ziegler
\cite{MZ} about the lattice of flats of a weighted matroid. We assume familiarity with the basic properties of matroids and geometric lattices, see
\cite{Oxley} and for the homological aspects \cite{BjHomShell}.

Let $M$ denote a matroid on the ground set $[n]\defeq \{1,2,\cdots,n\}$. As a general convention, we shall assume that all matroids are loopless. A \emph{weight} $\omega=(\omega_{1},\omega_{2},\cdots,\omega_n)$ on $M$ is any vector in~$\mathbb{R}^{[n]}$. If $\sigma$ is any subset of $[n]$, and $\mbf{1}_\sigma$ is its characteristic vector, then we set 
\[\omega\cdot \sigma\, \defeq \, \omega\cdot \mbf{1}_\sigma\, =\, \sum_{e\in \sigma} \omega_e.\]
A weight is \emph{generic} if $\omega\cdot \sigma\neq 0$ 
for all proper subsets $\emptyset \subsetneq \sigma\subsetneq [n]$. 
If $\LL=\LL[M]\defeq \widehat{\LL}{{\setminus}} \{\widehat{0},\widehat{1}\}$ is the proper part of the lattice of flats $\widehat{\LL}$ of $M$, and $t$ is any real number, then we use $\LL^{>t}$ to denote the 
subset of $\LL$ consisting of elements $\sigma\in \LL$ with $\omega\cdot \sigma>t$.
We will refer to the posets (partially ordered sets) of the form $\LL^{>t}$ as \emph{filtered geometric lattices}. Note that these posets are not lattices in general, let alone geometric lattices.
Nevertheless, we establish the following theorem, which is the main result of this section.

\begin{mthm}\label{mthm:pos_sum_geom_latt_cm}
Let $\LL$ be the lattice of flats of a matroid of rank $r\ge 2$, and let $\omega$ be a generic weight on its atoms. Let $t$ denote any real number with $t\le \min\{0,\omega\cdot [n]\}$. Then $\LL^{>t}$ is homotopy Cohen--Macaulay of dimension $r-2$, and is in particular $(r-3)$-connected.
\end{mthm}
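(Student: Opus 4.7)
I would proceed by induction on the rank $r$ of the matroid. For the base case $r=2$, the poset $\LL^{>t}$ is the antichain of atoms $a$ with $\omega_a>t$, and homotopy Cohen--Macaulayness of dimension $0$ is equivalent to nonemptiness. This is settled by a short case analysis: if $\omega\cdot[n]\ge 0$, some atom must be positive (else $\omega\cdot[n]<0$), so $\omega_a>0\ge t$; if $\omega\cdot[n]<0$, either a positive atom exists (giving $\omega_a>0>t$) or all atoms are negative, in which case $\omega_a-\omega\cdot[n]=-\sum_{b\ne a}\omega_b>0$ yields $\omega_a>\omega\cdot[n]\ge t$.

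For the inductive step, I would split homotopy Cohen--Macaulayness of $\LL^{>t}$ into (i) the $(r-k-3)$-connectivity of the link of every nonempty chain $\sigma=(x_1<\cdots<x_k)$, and (ii) the $(r-3)$-connectivity and purity of $\LL^{>t}$ itself. Claim (i) follows inductively from the join decomposition
\[
\Lk_\sigma \Delta(\LL^{>t}) \;\simeq\; \Delta(\LL^{>t}_{<x_1}) \,\ast\, \Delta((x_1,x_2)_{\LL^{>t}}) \,\ast\, \cdots \,\ast\, \Delta(\LL^{>t}_{>x_k}):
\]
each factor corresponds, via the standard identification of $[x_i,x_{i+1}]$ in $\widehat{\LL}$ with the lattice of flats of the matroid minor $M|x_{i+1}/x_i$, to a filtered geometric lattice of strictly smaller rank, with induced weight $\omega'$ and threshold $t'=t-\omega\cdot x_i$. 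The key verification is that $\omega'$ remains generic (by genericity of $\omega$ on subsets of $x_{i+1}\setminus x_i\subsetneq[n]$) and that the main hypothesis is preserved, i.e., $t'\le\min\{0,\omega'\cdot[\text{ground set}]\}$; the latter follows from $x_i,x_{i+1}\in\LL^{>t}$ and the ambient hypothesis $t\le\min\{0,\omega\cdot[n]\}$ by a short computation. Applying the inductive hypothesis to each factor and invoking the join connectivity formula (Lemma~\ref{lem:join}) then gives the required link connectivity; as a byproduct, every filtered interval of rank $\ge 2$ is nonempty, so that $\Delta(\LL^{>t})$ is pure of dimension $r-2$ and covers in $\widehat{\LL^{>t}}$ coincide with covers in $\widehat{\LL}$.

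Claim (ii), the $(r-3)$-connectivity of $\LL^{>t}$ itself, is the main obstacle, since it does not reduce via a join decomposition. Here I would invoke the generalized Quillen fiber lemma (Lemma~\ref{mlem:quillen}), combining it with a lexicographic shellability argument modeled on Björner's EL-shelling of geometric lattices. The strategy is to order the atoms of $M$ so that atoms of positive weight precede those of negative weight, produce an EL-labeling of $\widehat{\LL}$ from this ordering, and then show that its restriction to the maximal chains of $\widehat{\LL^{>t}}$ is itself a lex shelling whose homology facets are $(r-2)$-dimensional. Alternatively, one might apply Lemma~\ref{mlem:quillen} to the inclusion $\LL^{>t}\hookrightarrow\LL$ (using that the target is $(r-3)$-connected by Folkman--Björner) or to a cleverly constructed order-preserving retraction $\LL\to\LL^{>t}$. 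In either formulation, the delicate step is controlling the behavior over elements $x\in\LL\setminus\LL^{>t}$: the fiber $\LL^{>t}_{<x}$ lies outside the inductive regime (since $\omega\cdot x\le t$ violates $t\le\omega\cdot x$), and establishing the correct connectivity for such fibers appears to require either a broadened induction hypothesis, or a separate argument exploiting the positive atoms outside $x$ to collapse the fiber onto a well-behaved subcomplex.
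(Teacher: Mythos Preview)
Your treatment of claim~(i) (links of nonempty chains via minors and the join connectivity formula) is correct and matches the paper's Lemma~\ref{lem:intervals} and the final assembly of the proof. The gap is entirely in claim~(ii), and you have essentially diagnosed it yourself without finding the way out.

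Neither of your two suggestions for global connectivity works as stated. Directly EL-shelling $\widehat{\LL^{>t}}$ by ordering atoms so that positive weights come first is precisely the open problem recorded in Section~\ref{ssc:rem1}; it is known only in the Boolean case and when $\omega$ has a single negative entry (Wachs--Walker). Applying Lemma~\ref{mlem:quillen} to $\LL^{>t}\hookrightarrow\LL$ fails for the reason you identify: for $x\notin\LL^{>t}$ the lower fiber $\LL^{>t}_{<x}$ is a filtered lattice with total weight $\omega\cdot x<t$, so the inductive hypothesis does not apply to it, and there is no evident collapse onto a well-behaved subcomplex.

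The missing idea is to change the \emph{target} of the Quillen comparison. The paper maps $\LL^{>t}\hookrightarrow\BB^{>t}$, the filtered \emph{Boolean} lattice on the same ground set, rather than into $\LL$. This buys two things. First, the lexicographic shelling you wanted does go through for $\BB^{>t}$ (Theorem~\ref{thm:bc}): label each cover $(\sigma,\tau)$ by $\omega\cdot(\tau\setminus\sigma)$, and the unique decreasing chain lies in $\BB^{>t}$ precisely because $t\le\min\{0,\omega\cdot[n]\}$ forces both endpoints in. Second, one uses the \emph{dual} form of Lemma~\ref{mlem:quillen}, controlling upper fibers $\varphi^{-1}(\BB^{>t}_{\ge x})=\LL^{>t}_{\ge\kappa(x)}$ and lower ideals $\BB^{>t}_{<x}$. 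The lower ideals are handled by the Boolean shelling just established. The upper fibers are cones whenever $\kappa(x)\in\LL^{>t}$; in the remaining case they are filtered contractions $\LL[M/\kappa(x)]^{>t'}$ on strictly fewer elements, so an induction on $n=|M|$ (not on $r$) closes the argument. The point is that the closure map $\kappa$ absorbs the bad behavior: the problematic elements of $\BB^{>t}$ with $\kappa(x)\notin\LL^{>t}$ are exactly those where $x\subsetneq\kappa(x)$, and the slack $|x|\ge\rk(\kappa(x))$ compensates in the dimension count.
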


The proof of this result is articulated in a few steps. We start from homotopy
information available for free matroids,
and from this we deduce information concerning $\LL^{>t}$,
using our generalization of 
Quillen's ``Theorem A'' (Lemma~\ref{mlem:quillen}).
As an immediate corollary, we obtain what can reasonably be called
 a ``Lefschetz Section Theorem for matroids'':

\begin{mcor}\label{mthm:rel_pos_sum_geom_latt_cm}
Let $\LL$ denote the lattice of flats of a matroid of rank $r\ge 2$, and let $\omega$ 
be any generic weight on its atoms. Let $t, t'$ be any pair of real numbers with $t'<t\le \min\{0,\omega\cdot [n]\}$. Then $\LL^{>t'}$ is obtained from $\LL^{>t}$ by attaching cells of dimension $r-2$.
In particular, $(\LL^{>t'}, \LL^{>t})$ is homotopy Cohen--Macaulay of dimension $r-2$. 
\end{mcor}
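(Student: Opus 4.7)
The approach is to derive this corollary directly from Theorem~\ref{mthm:pos_sum_geom_latt_cm} via a standard relative CW approximation argument.

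First I would apply Theorem~\ref{mthm:pos_sum_geom_latt_cm} to both thresholds, obtaining that $\LL^{>t}$ and $\LL^{>t'}$ are each homotopy Cohen--Macaulay of dimension $r-2$, and in particular $(r-3)$-connected. Since chains in $\LL$ have length at most $r-1$, both order complexes have dimension at most $r-2$ as simplicial complexes.

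Next I would show that the pair $(\LL^{>t'}, \LL^{>t})$ is itself $(r-3)$-connected. For $r\ge 4$ this is immediate from the long exact sequence of homotopy groups of the pair, since both spaces are simply connected by the theorem. For the boundary cases $r=2$ and $r=3$ the claim reduces to showing that $\LL^{>t}$ is non-empty and meets every component of $\LL^{>t'}$; non-emptiness follows from the hypothesis $t\le\min\{0,\omega\cdot[n]\}$ combined with genericity of $\omega$ (an atom-by-atom weight-summing argument shows that if every atom had weight $\le t$ then $\omega\cdot[n]$ would violate either the hypothesis or genericity), and every component of $\LL^{>t'}$ is met because $\LL^{>t'}$ is already connected by the theorem.

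Finally, since the pair $(\LL^{>t'},\LL^{>t})$ is $(r-3)$-connected and $\dim\Delta(\LL^{>t'})\le r-2$, standard relative CW approximation (cf.\ \cite[\S 4.1]{Hatcher}) produces a model of the inclusion $\Delta(\LL^{>t})\hookrightarrow\Delta(\LL^{>t'})$ as a relative CW complex in which every added cell has dimension exactly $r-2$; this is the first assertion. The second assertion (homotopy Cohen--Macaulayness of the pair) follows at once, since attaching only top-dimensional cells to a Cohen--Macaulay complex of matching dimension yields a Cohen--Macaulay pair in the sense of this paper. The one delicate point is the dimension-constrained CW approximation in this last step, which hinges precisely on the interplay between the bound $\dim\Delta(\LL^{>t'})\le r-2$ and the $(r-3)$-connectedness of the pair; no further combinatorial input on $\LL$ beyond Theorem~\ref{mthm:pos_sum_geom_latt_cm} is required.
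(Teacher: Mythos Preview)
Your proof is correct and follows essentially the same route as the paper, which likewise cites Theorem~\ref{mthm:pos_sum_geom_latt_cm} together with the long exact sequence of relative homotopy groups; the only difference is that for the cell-attachment step the paper points to Lemma~\ref{lem:lk} (removing the elements of $\LL^{>t'}\setminus\LL^{>t}$ one by one, each link being a wedge of $(r-3)$-spheres by the homotopy Cohen--Macaulay property), whereas you appeal directly to relative CW approximation. One small remark: your final step needs slightly more than what is literally in Hatcher~\S4.1 --- CW approximation alone yields cells of dimension $\ge r-2$, and to land on dimension \emph{exactly} $r-2$ you use (for $r\ge 4$) that $H_{r-2}(\LL^{>t'},\LL^{>t})$ is free abelian (automatic, since it embeds in the top relative chain group) together with relative Hurewicz and Whitehead; this is standard, and you rightly flag it as the delicate point.
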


\begin{proof}
This follows directly from Theorem~\ref{mthm:pos_sum_geom_latt_cm}, the long exact sequence of relative homotopy groups, and Lemma~\ref{lem:lk}. 
\end{proof}

It is instructive to consider the case when $\omega\cdot [n]=0$ and $t=0$.
Then the geometric lattice splits into two parts $\LL^{>0}$ and $\LL^{<0}$,
which are both homotopy Cohen--Macaulay and of the same dimension. Moreover, the union of both $\LL^{>0}$ and $\LL^{<0}$ can be thought of as the complement of an affine hyperplane in $\LL$ via the Bergman fan, compare Section~\ref{ssec:pos}.

For full geometric lattices it follows from Cohen-Macaulayness
and the work of Rota \cite{Rota} on the 
M\"obius function that the number of $(r-2)$-spheres in the wedge is
strictly positive. This is not true for filtered geometric lattices.
For example, if there is exactly one positive weight $\omega_i > 0$
then $\LL^{>0}$ is contractible. However, the following relative
information is immediate in the general case:

\begin{cor}
If $t'<t\le \min\{0,\om\cdot [n]\}$, then \[\dim(H_{r-2}(\LL^{>t})) \le \dim (H_{r-2}(\LL^{>t'})).\]
\end{cor}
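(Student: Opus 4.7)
The plan is to exploit the strong form of Corollary~\ref{mthm:rel_pos_sum_geom_latt_cm}: the pair $(\LL^{>t'}, \LL^{>t})$ is obtained by attaching cells of a single dimension, namely $r-2$. First I would observe that since $t'<t$, we have a poset inclusion $\LL^{>t}\hookrightarrow \LL^{>t'}$, and by the preceding corollary the larger space is, up to homotopy, obtained from the smaller one by successively attaching $(r-2)$-cells. This means that the pair carries a relative CW structure whose cells all live in a single dimension $r-2$; consequently the relative cellular chain complex is concentrated in degree $r-2$ and therefore
\[
H_i(\LL^{>t'}, \LL^{>t}) \;=\; 0 \qquad \text{for all } i\neq r-2,
\]
with $H_{r-2}(\LL^{>t'}, \LL^{>t})$ free.

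Next I would plug this into the long exact sequence of the pair,
\[
\cdots \to H_{r-1}(\LL^{>t'}, \LL^{>t}) \to H_{r-2}(\LL^{>t}) \to H_{r-2}(\LL^{>t'}) \to H_{r-2}(\LL^{>t'}, \LL^{>t}) \to H_{r-3}(\LL^{>t}) \to \cdots ,
\]
taken with coefficients in the chosen field. The left group vanishes by the previous paragraph, so the inclusion-induced map $H_{r-2}(\LL^{>t})\to H_{r-2}(\LL^{>t'})$ is injective. Taking dimensions yields the desired inequality. (One does not even need the right-hand vanishing coming from Theorem~\ref{mthm:pos_sum_geom_latt_cm}; the inequality already follows from the injection.)

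There is no genuine obstacle here: everything reduces to the cell-attaching statement already proved. The only small care needed is to make sure that ``obtained by successively attaching $(r-2)$-cells'' really gives a relative CW structure with cells exclusively in dimension $r-2$, so that the relative homology vanishes in degree $r-1$. This is immediate from the definition used in Section~\ref{ssc:ct} together with cellular homology. An alternative, essentially equivalent, route is to note that by Theorem~\ref{mthm:pos_sum_geom_latt_cm} both $\LL^{>t}$ and $\LL^{>t'}$ are homotopy equivalent to wedges of $(r-2)$-spheres, so their reduced homology is concentrated in degree $r-2$ and free; the injectivity of the map on top homology then suffices to conclude.
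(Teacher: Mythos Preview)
Your argument is correct and is precisely the reasoning the paper has in mind: the statement is declared ``immediate'' without proof, following directly from Corollary~\ref{mthm:rel_pos_sum_geom_latt_cm}, and your long exact sequence computation is the natural way to unpack that word. Nothing further is needed.
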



\subsection{Preliminaries to the proof}

Let us first observe a general heredity property of filtered geometric lattices that we will use repeatedly for purposes
 of induction. 

\begin{lem}\label{lem:intervals}
Let $(\LL, r, n, \om, t)$ be as in Theorem~\ref{mthm:pos_sum_geom_latt_cm}. Let $(\LL^{>t; \, \omega\,})_{(\si, \tau)}$ be any open interval in $\LL^{>t}$.
Then for $t'=t-\omega\cdot{\si}$ and $\omega'=\omega_{|(\tau-\sigma)}$ we have
\[
(\LL^{>t; \, \omega\,})_{(\si, \tau)} = (\LL_{(\si, \tau)})^{>t'; \om'}. 
\]
\end{lem}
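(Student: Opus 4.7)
The plan is to unwind both sides of the claimed equality and match them via a direct computation on the weight. Recall that the open interval $\LL_{(\sigma,\tau)}$ in the geometric lattice is canonically isomorphic to the proper part of the lattice of flats of the minor obtained by contracting $\sigma$ and restricting to $\tau$; under this identification, a flat $\nu$ with $\sigma \subsetneq \nu \subsetneq \tau$ corresponds to the set $\nu \setminus \sigma \subseteq \tau \setminus \sigma$, so that the expression $\omega'\cdot(\nu\setminus\sigma)$ is well-defined with $\omega' = \omega_{|(\tau-\sigma)}$. The partial order on both sides of the claimed equality is the one induced from $\LL$, so the underlying posets agree once I verify that the two filtration conditions cut out the same elements.

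That verification is a one-line calculation. Since $\sigma\subseteq\nu$, the additivity of the pairing $\omega\cdot(\,\cdot\,)$ on disjoint unions gives
\[
\omega\cdot\nu \;=\; \omega\cdot\sigma \,+\, \omega\cdot(\nu\setminus\sigma) \;=\; \omega\cdot\sigma \,+\, \omega'\cdot(\nu\setminus\sigma),
\]
where the last equality uses $\nu\setminus\sigma\subseteq\tau\setminus\sigma$ together with the definition $\omega'=\omega_{|(\tau-\sigma)}$. Subtracting $\omega\cdot\sigma$ from the defining inequality $\omega\cdot\nu > t$ rewrites it as $\omega'\cdot(\nu\setminus\sigma) > t - \omega\cdot\sigma = t'$, which is precisely the filtration condition defining $(\LL_{(\sigma,\tau)})^{>t';\,\omega'}$.

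There is no real obstacle: this is a bookkeeping lemma whose only purpose is to rewrite open intervals of filtered geometric lattices as filtered geometric lattices associated to contraction-restriction minors, setting up the inductive step for the proof of Theorem~\ref{mthm:pos_sum_geom_latt_cm}. The one point worth flagging (for later applications rather than for the lemma itself) is that one will separately need to check, in each inductive invocation, that the shifted threshold $t'$ continues to satisfy the hypothesis $t' \le \min\{0,\,\omega'\cdot(\tau\setminus\sigma)\}$ and that $\omega'$ remains generic on $\tau\setminus\sigma$; but neither of these conditions is part of the present statement.
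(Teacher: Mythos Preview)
Your proof is correct and takes essentially the same approach as the paper: both identify the open interval with the lattice of flats of the appropriate minor and verify the filtration condition by the additivity computation $\omega\cdot\nu = \omega\cdot\sigma + \omega'\cdot(\nu\setminus\sigma)$. The only cosmetic difference is that the paper decomposes the argument into the two special cases $\sigma=\emptyset$ (restriction) and $\tau=[n]$ (contraction) and then composes them via $\LL_{(\sigma,\tau)} = (\LL_{<\tau})_{>\sigma}$, whereas you handle the general interval in one step.
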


\begin{proof}
Consider first the case when $\si=\emptyset$. Then
$\LL_{(\si, \tau)} =\LL_{<\tau}$ is the lattice of flats of the matroid $M'$ of rank $\rk(\tau)$ 
obtained as the restriction of $M$ to ${\tau}$.
Therefore, $\LL^{>t}_{<\tau}\cong \LL[M']^{>t}$, where $M'$ 
is endowed with the weight given by the restriction $\omega_{|\tau}$ of $\omega$ to the set $\tau$.

Next, suppose that $\tau=[n]$. Then $\LL_{(\si, \tau)} = \LL_{> \si}$ is isomorphic to the lattice of flats of the rank $(r-\rk({\si}))$ matroid $M''$ 
obtained as the contraction of ${\si}$ in $M$.
Moreover, if $M''$ is endowed with weight $\omega_{|[n]{{\setminus}} \si}$ then
\[\LL_{> \si}^{>t}\cong \LL[M'']^{>(t-\omega\cdot{\si})}.\]
Since
$(\LL_{<\tau})_{>\si}=\LL_{(\si, \tau)}$ the general
result is obtained from these two special cases.
\end{proof}

The fact that all maximal chains in 
$\LL^{>t}$ have equal length $r-2$ is a direct consequence.

\begin{lem}\label{lem:pure}
$\LL^{>t}$ is pure and $(r-2)$-dimensional.
\end{lem}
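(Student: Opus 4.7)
The plan is to prove the lemma in two steps, with no induction on the rank needed: first establish that $\LL^{>t}$ is non-empty by exhibiting an atom of $\widehat{\LL}$ in it, and then deduce purity of dimension $r-2$ by combining Lemma~\ref{lem:intervals} with this non-emptiness statement applied to strictly smaller minors.

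For non-emptiness, the looplessness of $M$ means the atoms of $\widehat{\LL}$ are the parallel classes $a_1,\ldots,a_k$, which partition $[n]$, and any basis of $M$ witnesses $k \ge r \ge 2$. In particular $\sum_i \omega\cdot a_i = \omega\cdot[n]$. If every atom satisfied $\omega\cdot a_i \le t$, summation would give $\omega\cdot[n] \le kt$; combined with the standing bound $t \le \omega\cdot[n]$ this forces $t \le kt$, which (as $k \ge 2$ and $t \le 0$) is only possible when $t = 0$ and every $\omega\cdot a_i = 0$. The latter contradicts the genericity of $\omega$, since each $a_i \subsetneq [n]$ is a proper non-empty subset.

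For purity, fix any maximal chain $\sigma_1 < \cdots < \sigma_m$ in $\LL^{>t}$, and argue successively that $\sigma_1$ is an atom of $\widehat{\LL}$, that $\sigma_m$ is a coatom, and that each $\sigma_{j+1}$ covers $\sigma_j$ in $\widehat{\LL}$. Any failure would expose an open interval $(\tau',\tau'')$ in $\widehat{\LL}$---with endpoints drawn from $\{\hat{0},\sigma_j\}$ and $\{\hat{1},\sigma_{j+1}\}$---whose underlying matroid minor $M'$ has rank in $\{2,\ldots,r-1\}$. By Lemma~\ref{lem:intervals} the corresponding open interval in $\LL^{>t}$ is $\LL[M']^{>t';\,\omega'}$ for the prescribed shifted weight $\omega'$ and threshold $t' = t - \omega\cdot\tau'$. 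A routine check using $\omega\cdot\sigma_j > t$ (to guarantee $t' \le 0$) together with $\omega\cdot[n] \ge t$ and $\omega\cdot\sigma_{j+1} > t$ (to guarantee $t' \le \omega'\cdot(\text{groundset of }M')$) verifies that the non-emptiness step applies to $(M',\omega',t')$, producing a flat strictly between $\tau'$ and $\tau''$ contained in $\LL^{>t}$ and contradicting maximality. Hence $m = r-1$ and every maximal chain has length $r-2$. The only delicate moment in the whole argument is the corner case $t = 0 = \omega\cdot[n]$ in the non-emptiness step, where the weight inequalities collapse to equalities and one must invoke genericity to rule out the degenerate configuration.
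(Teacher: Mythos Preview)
Your proof is correct and follows essentially the same strategy as the paper: both reduce purity to the non-emptiness of filtered geometric lattices for minors via Lemma~\ref{lem:intervals}, and both establish non-emptiness by the same weight-summation argument (the paper phrases it for singletons $\omega_i$ rather than parallel classes, but this is immaterial). The only organizational difference is that the paper packages the passage from non-emptiness to purity as an induction on rank over the upper intervals $\LL^{>t}_{>\sigma}$, whereas you unwind this into a direct gap-filling argument on an arbitrary maximal chain; your version is somewhat more explicit about why every minimal element of $\LL^{>t}$ must be an atom, a point the paper's terse induction leaves implicit.
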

\begin{proof}
For rank $r=2$ the statement boils down to saying that 
$\LL^{>t}$ is nonempty.
Suppose that this were not the case. Then $\om_i\le t$ for all $i$, implying that
$t\le \om\cdot [n]\le tn$, which is impossible if $t<0$.
The case when $t=0=\om\cdot [n]$ is clear.

A proof by induction on rank now follows easily
from Lemma~\ref{lem:intervals} by considering intervals 
$\LL^{>t}_{> \si}$ where $\si$ is an atom.
\end{proof}

\subsection{Free matroids.} We begin with the following strengthening of Theorem~\ref{mthm:pos_sum_geom_latt_cm}
for the special case of free matroids, that is, matroids where all sets are independent.

We reserve the notation $\BB=\BB[n]$ for the proper part of the lattice of flats of the free matroid on $n$ elements. It coincides with the proper part of the Boolean lattice $\wh{\BB}=2^{[n]}$ of subsets of $[n]=\{1,\cdots,n\}$, that is, $\BB =2^{[n]}{{\setminus}} \{\emptyset, [n]\}$.

\begin{thm}\label{thm:bc}
Let $\omega$ denote any generic weight on $[n]$, and suppose that 
$t\le \min\{0,\omega\cdot [n]\}$.
Then $\BB^{>t}$ is shellable and $(n-2)$-dimensional.
In particular, it is homotopy Cohen--Macaulay. 
\end{thm}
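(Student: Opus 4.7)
The plan is to establish shellability of the bounded poset $\BB^{>t}\cup\{\hz,\ho\}$ obtained by formally adjoining a minimum and a maximum, by constructing an EL-labeling in the sense of Bj\"orner. Once the EL-labeling is in place, Bj\"orner's lexicographic shellability theorem yields shellability of the order complex of $\BB^{>t}$, and shellability of a pure complex is homotopy Cohen--Macaulay, so it suffices to establish the EL-labeling together with purity of dimension $n-2$.

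Purity follows from the specialization of Lemma~\ref{lem:pure} to the free matroid, but it is instructive to record the direct argument. Given $\sigma\in\BB^{>t}$ with $2\le|\sigma|\le n-1$, I seek $i\in\sigma$ with $\sigma{{\setminus}}\{i\}\in\BB^{>t}$: if none existed, then $\om_i\ge\om\cdot\sigma-t$ for every $i\in\sigma$, and summing gives $\om\cdot\sigma\ge|\sigma|(\om\cdot\sigma-t)$, contradicting $|\sigma|\ge 2$ together with $t\le 0$. The upward step, finding $j\notin\sigma$ with $\sigma\cup\{j\}\in\BB^{>t}$, is entirely parallel, using the hypothesis $t\le\om\cdot[n]$.

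For the EL-labeling I order $[n]$ by strictly decreasing weight, $i\prec j$ iff $\om_i>\om_j$ (a total order by genericity), and label each cover $\sigma\lessdot\sigma\cup\{i\}$ of the Boolean lattice by $i$, viewed as an element of $([n],\prec)$. In any Boolean interval the multiset of labels on a maximal chain is determined by the interval, so the $\prec$-increasing chain, obtained by adjoining the elements of $y{{\setminus}} x$ in order of decreasing weight, is simultaneously the unique increasing chain and the lex-least maximal chain. The remaining point, which I expect to be the main obstacle, is to confirm that this greedy chain lies inside $\BB^{>t}$ for every interval $[x,y]$ of $\BB^{>t}\cup\{\hz,\ho\}$. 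The key observation is a unimodality argument: along the greedy chain $\sigma_0\subsetneq\sigma_1\subsetneq\cdots\subsetneq\sigma_k$ the successive increments $\om\cdot\sigma_j-\om\cdot\sigma_{j-1}$ are strictly decreasing, so $j\mapsto\om\cdot\sigma_j$ is concave and hence unimodal, giving $\om\cdot\sigma_j\ge\min(\om\cdot\sigma_0,\om\cdot\sigma_k)$. Each endpoint weight is $\ge t$: strictly so for proper elements of $\BB^{>t}$, while at $\hz$ one invokes $0\ge t$ and at $\ho$ one invokes $\om\cdot[n]\ge t$.

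It remains to upgrade this weak inequality to a strict one at intermediate $\sigma_j$ in the two boundary situations where $\min(\om\cdot\sigma_0,\om\cdot\sigma_k)=t$, namely $t=0$ with $x=\hz$ and $t=\om\cdot[n]$ with $y=\ho$. Here genericity intervenes: any intermediate $\sigma_j$ is a proper nonempty subset of $[n]$, so $\om\cdot\sigma_j\ne 0$ and $\om\cdot([n]{{\setminus}}\sigma_j)\ne 0$, the latter giving $\om\cdot\sigma_j\ne\om\cdot[n]$. In either boundary case this is precisely the ingredient needed to conclude $\om\cdot\sigma_j>t$. Combined with purity of dimension $n-2$, the verified EL-labeling yields the asserted shellability and homotopy Cohen--Macaulayness.
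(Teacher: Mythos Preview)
Your proof is correct and follows essentially the same lexicographic-shellability approach as the paper, with your explicit concavity/unimodality argument supplying the details where the paper merely writes ``as is easy to see.'' One small correction: genericity as defined in the paper ($\omega\cdot\sigma\ne 0$ for all proper nonempty $\sigma$) does \emph{not} force the $\omega_i$ to be pairwise distinct, so $\prec$ need not be a total order; the paper handles this by first perturbing $\omega$ slightly without changing $\BB^{>t}$, and you should do the same (or simply break ties arbitrarily, which leaves your concavity argument intact).
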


\begin{proof} 
We use the method of lexicographic shellability \cite{Bj1, Cortona}. 
We can assume that $\om_i \neq \om_j$ for $i\neq j$. This can always be achieved 
by a small perturbation of the weight vector $\om$ 
that does not change ${\BB}^{>t}$.

To each covering edge $(\si, \tau)$ of $\wh{\BB}$ we assign the real number 
$\la(\si, \tau)\defeq  \om\cdot (\tau{{\setminus}} \si)=\om\cdot \tau-\om \cdot\si)$. This edge labelling induces a 
labelling of the maximal chains of $\wh{\BB}^{\hspace{0.08 em} >t}$. We know from 
Lemma~\ref{lem:pure} that these chains are all of cardinality $n+1$ (including 
the top and bottom elements $\emptyset$ and $[n]$). The label $\la(m)$ of a 
maximal chain $m$ is simply the induced permutation of the coordinates of the 
weight vector $\om$.

There is a unique maximal chain $\ol{m}$ in $\wh{\BB}$ with the property that 
the labels form a decreasing sequence. After relabelling this is 
\[\la(\ol{m})=(\om_1 > \om_2 > \cdots > \om_n ).\]

We have that \[\emptyset \in \wh{\BB}^{\hspace{0.08 em} >t} \;\Lrarr\; t< 0 
\quad \mbox{ and } \quad [n] \in \wh{\BB}^{\hspace{0.08 em} >t} \;\Lrarr\; t< 
\om\cdot [n],\] so the hypothesis $t\le \min \{ 0, \om\cdot [n]\}$ implies that 
both endpoints of the chain $\ol{m}$ belong to $\wh{\BB}^{\hspace{0.08 em} 
>t}$. From this follows that the entire chain $\ol{m}$ is in~$\wh{\BB}^{\hspace{0.08 em} >t}$, as is easy to see. Also, this chain is 
lexicographically first among the maximal chains in $\wh{\BB}$, and so also in 
$\wh{\BB}^{\hspace{0.08 em} >t}$.

Similar reasoning can be performed locally at each interval $(\mu, \nu)$ to 
prove the existence of a unique decreasingly labelled maximal chain in $(\mu, 
\nu)$ which lexicographically precedes all the other maximal chains in that 
interval. This completes the verification of the conditions for lexicographic 
shellability. \end{proof}

\begin{rem} The conclusion of the theorem can be sharpened to state that 
$\BB^{>t}$ is PL-homeomorphic to a ball or a sphere. Some aspects of this 
additional information are discussed in \cite{Cortona}, it will not be needed 
here.\end{rem}

\subsection{Connectivity and Cohen--Macaulayness.} We continue the 
proof of Theorem~\ref{mthm:pos_sum_geom_latt_cm} by establishing the degree of 
connectivity for $\LL^{>t}$.

\begin{thm}\label{thm:filtgl}
Let $(\LL, r, n, \om, t)$ be as in the statement of Theorem 
\ref{mthm:pos_sum_geom_latt_cm}. Then $\LL^{>t}$ is $(r-3)$-connected.
\end{thm}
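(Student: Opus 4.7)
My plan is to prove Theorem~\ref{thm:filtgl} by induction on the rank $r$. The base case $r=2$ is immediate: by Lemma~\ref{lem:pure}, $\LL^{>t}$ is a nonempty antichain of atoms, which is the definition of $(-1)$-connectedness.

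For the inductive step, I would invoke the generalized Quillen Theorem A (Lemma~\ref{mlem:quillen}) applied to the closure map of the matroid, in order to compare $\LL^{>t}$ to a Boolean-type poset whose connectivity is supplied by Theorem~\ref{thm:bc}. Concretely, let $\mc{P}$ be the subposet of $\BB^{>t}[n]$ consisting of those $S$ with $\overline S \in \LL^{>t}$, and define the order-preserving map $\varphi\colon \mc{P} \to \LL^{>t}$ by $\varphi(S) = \overline S$. Two features drive the argument. First, for each $F \in \LL^{>t}$, since $\overline F = F$, the flat $F$ itself lies in the fiber $\varphi^{-1}(\LL^{>t}_{\le F}) = \{S \in \mc{P} : S \subseteq F\}$ as its unique maximum element; hence every fiber is contractible. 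Second, Lemma~\ref{lem:intervals} identifies the upper ideal $\LL^{>t}_{>F}$ with the filtered geometric lattice of the contraction $M/F$ of rank $r - \rk F$, weight $\omega|_{[n]\setminus F}$, and shifted threshold $t' = t - \omega\cdot F$. Since $F \in \LL^{>t}$ gives $t' < 0$, and $t' \le \omega' \cdot ([n]\setminus F)$ follows directly from $t \le \omega \cdot [n]$, the inductive hypothesis (with a boundary convention $\ell_F = -2$ when $F$ is a coatom so that $\LL^{>t}_{>F}=\emptyset$) provides $(r - \rk F - 3)$-connectedness.

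Combining these inputs via Lemma~\ref{mlem:quillen}, the contractibility of all fibers makes the comparison index $k$ infinite, so $\mc{P} \simeq \LL^{>t}$. The proof then reduces to establishing $(r-3)$-connectedness of $\mc{P}$. Since $\BB^{>t}[n]$ is $(n-2)$-connected by Theorem~\ref{thm:bc} and $r\le n$, I would transfer this connectivity to $\mc{P}$ by a second Quillen argument using a closure-type retraction $\BB^{>t}[n]\to \mc{P}$, or, alternatively, by directly extending the lexicographic shelling of Theorem~\ref{thm:bc} to a shelling of $\mc{P}$.

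The main obstacle is precisely the analysis of $\mc{P}$. The closure map is not everywhere defined on $\BB^{>t}[n]$: a subset $S$ with $\omega\cdot S > t$ may have $\omega\cdot \overline S \le t$ (if the extra closure elements carry enough negative weight) or $\overline S = [n]$, so $\mc{P}$ is strictly smaller than $\BB^{>t}[n]$ in general. Showing that this restriction does not lower the connectivity below $r-3$ is the delicate point, and must use in an essential way both the genericity of $\omega$ and the bound $t \le \min\{0,\omega\cdot[n]\}$. A route that avoids this reduction entirely, sketched in Remark~3.2, is a direct discrete Morse-theoretic approach: construct an acyclic matching on $\Delta(\LL^{>t})$ whose critical cells are concentrated in the top dimension $r-2$, immediately yielding $(r-3)$-connectedness.
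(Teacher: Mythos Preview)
Your proposal has a genuine gap at exactly the point you identify as ``the delicate point'': you reduce everything to the $(r-3)$-connectedness of the intermediate poset $\mc{P}=\{S\in\BB^{>t}:\overline S\in\LL^{>t}\}$, and then offer no argument for it. The suggested remedies (a ``second Quillen argument'', extending the shelling, or an unspecified discrete Morse matching) are not proofs; in particular, the closure map $\BB^{>t}\to\wh\LL$ does not land in $\mc{P}$, as you note, so there is no obvious retraction available. Your first Quillen step, with contractible fibers, only tells you $\mc{P}\simeq\LL^{>t}$, which is correct but pushes the entire difficulty into $\mc{P}$; the inductive hypothesis on $r$ is never actually used.

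The paper's proof avoids the intermediate poset entirely by running the fiber argument in the \emph{opposite} direction. Take $\varphi$ to be the inclusion $\LL^{>t}\hookrightarrow\BB^{>t}$ and apply Lemma~\ref{mlem:quillen} in its dual (upper-set) form. The target $\BB^{>t}$ is already $(n-3)$-connected by Theorem~\ref{thm:bc}, so one only has to control the fibers and the complementary ideals. For $x\in\BB^{>t}$ the ideal $\BB^{>t}_{<x}$ is $(|x|-3)$-connected by Theorem~\ref{thm:bc} again, and the fiber $\varphi^{-1}(\BB^{>t}_{\ge x})=\LL^{>t}_{\ge\kappa(x)}$ is either a cone (when $\kappa(x)\in\LL^{>t}$) or, when $\kappa(x)\notin\LL^{>t}$, a filtered geometric lattice of a contraction on a \emph{strictly smaller ground set}, handled by induction on $n$ (not on $r$). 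The quantitative version of the fiber lemma then gives $k\ge r-3$ directly from the estimate $\dim\LL^{>t}_{\ge\kappa(x)}+|x|-2\ge r-3$. The point you were missing is that Lemma~\ref{mlem:quillen} does not require contractible fibers: sufficiently connected fibers suffice, and this is precisely where the induction hypothesis enters.
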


\begin{proof}
We prove the theorem by induction on the cardinality $n=|M|$, the case $n=1$ being trivial.
With the intent of applying a Quillen-type fiber argument, let us consider the inclusion map $\varphi:\LL^{>t}\hookrightarrow \BB^{>t}$. We must  analyse the fibers $\varphi^{-1} (\BB^{>t}_{\ge x})$ and the lower ideals 
$\BB_{<x}^{>t}$ for all $x\in \BB^{>t}$. 

We have that $t< \om \cdot x$, since $x\in\BB^{>t}$, and 
$t\le \min\{0,\omega\cdot [n]\}$. Hence, $t\le \min\{0,\omega\cdot x\}$, and it follows from
Lemma~\ref{lem:intervals} and Theorem~\ref{thm:bc} that the posets 
$\BB_{<x}^{>t} \cong \BB[x]^{>t}$ are $(|x|-3)$-connected. 

It remains to consider the fibers $\varphi^{-1} (\BB^{>t}_{\ge x})$.
Let ${\kappa}:\BB\rightarrow \LL$ denote the matroid closure map 
${S}\mapsto \bigvee_{e\in {S}} e$, and let $x$ be any element in $\BB^{>t}$.
Then,
 \[\varphi^{-1} (\BB^{>t}_{\ge x}\ )=\ \LL^{>t}_{\ge x}\ =\ \LL^{>t}_{\ge \kappa(x)}.\] 
If $\kappa(x)\in \LL^{>t}$, the fiber is a cone, and hence contractible. 

If $\kappa(x)\notin \LL^{>t}$, then by the induction assumption and Lemma~\ref{lem:intervals}, $\LL^{>t}_{\ge x}$ is 
$(\dim \LL_{\ge \kappa(x) }^{>t}-1)$-connected.

We have shown that the fiber
$\varphi^{-1} (\BB^{>t}_{\ge x})$ is 
$(\dim \LL_{\ge \kappa(x) }^{>t}-1)$-connected and the ideal 
$\BB_{<x}^{>t}$ 
is  $(|x|-3)$-connected,
for all $x\in \BB^{>t}$.
Hence, by the Fiber Lemma~\ref{mlem:quillen}, the inclusion map $\varphi$ yields an isomorphism of homotopy groups up to dimension $k$, and a surjection in dimension $k+1$, where
\begin{align*}
k\ \defeq \ & \min_{\substack{x\in \BB^{>t}\\ \kappa(x)\notin \LL^{>t}} } (\dim (\LL^{>t}_{\ge \kappa(x)})+|x|)-2 
\end{align*}
Now, for $x\in \BB^{>t}$ with $\kappa(x)\notin \LL^{>t}$, we have
\begin{align*}
 \dim \LL^{>t}_{\ge\kappa(x)}+|x|-2 \ 
\ge&\ \dim \LL^{>t}_{\ge_\kappa(x)}+\dim(\LL^{>t}_{\le \kappa(x)})-1\\
=&\ \dim \LL^{>t} -1\\
=&\ r-3
\end{align*}
Hence, $k\ge r-3$, and since $\BB^{>t}$ is $(r-3)$-connected, so is $\LL^{>t}$.
\end{proof}

We can now finish and prove the homotopy Cohen--Macaulay property, 
which demands that we show the purity of $\LL^{>t}$ and that each interval is connected up to 
its dimension minus one.

\begin{proof}[\textbf{Proof of Theorem~\ref{mthm:pos_sum_geom_latt_cm}}]
Let $(\LL^{>t})_{(\si, \tau)}$ be an open interval.
We know from Lemma~\ref{lem:pure} that its
order complex has dimension $\rk(\tau)-\rk(\si)-2$ and from 
Lemma~\ref{lem:intervals} and 
Theorem~\ref{thm:filtgl} that 
it is $(\rk(\tau)-\rk(\si)-3)$-connected.
\end{proof}


\section{Remarks, examples and open problems.}
\label{ssc:rem1}

\subsection{Connectivity.}\label{ssc:crime}

Theorem~\ref{mthm:pos_sum_geom_latt_cm}
has some content of a combinatorial nature which deserves explicit statement.

Let $\LL$ be the lattice of flats of a matroid of rank $r\ge 3$, 
and let $\LL_a =\{x\in\LL\mid \text{rank(x)}=a\}$. Then, for integers 
$0<a<b<r$ let $G_{a,b}$ be the bipartite graph with vertex set
$\LL_a \cup \LL_b$ and edges induced by set containment, that is, $G_{a,b}$
is the Hasse diagram of $\LL$ restricted to ranks $a$ and $b$.

Now, let $\omega$ be any generic weight on the atoms
$\LL_1$ of $G_{a,b}$, summing to $0$. 
As before, via summation this induces a weight on each flat in $\LL$, 
thus splitting each rank level $\LL_a$ into a positive and a negative 
part $\LL_a = \LL_{a}^{+} \cup \LL_a^-$. In particular, 
the vertices of our bipartite graph  $G_{a,b}$ are either positive or
negative. Let  $G_{a,b}^+$ denote the subgraph induced on the
positive vertices, and similarly for $G_{a,b}^-$.

\begin{mcor}\label{mthm:crime}
Under the stated circumstances, both graphs
$G_{a,b}^+$ and $G_{a,b}^-$ are connected.
\end{mcor}
\begin{proof}
As a one-dimensional subcomplex of
the full order complex of $\LL$,  
the bipartite graph $G_{a,b}^+$ 
is obtained by rank-selection. Rank-selection 
preserves Cohen-Macaulayness, see e.g. 
\cite[Theorem 11.13]{BTM}
for references to several original sources 
for this fact. Since for one-dimensional
complexes Cohen-Macaulayness is equivalent to being connected,
the proof is complete.
\end{proof}

It is worth noticing that we may initially place the weight  on the elements of any 
rank level $\LL_a$. As an instance of the {\it finite Radon transform} 
(see \cite{Kung})
this determines the weight function on the set of atoms
$\LL_1$, and from there the weight function on all of $\LL$.

\vspace{3mm}

The 
$(a,b)=(1,2)$ case of Corollary~\ref{mthm:crime}
can be illustrated by the following mock application to crime prevention.

Suppose we are dealing with a crime-ridden city
in which every two streets cross once and only once. 
Each street  
has  been given a  ranking number,
reflecting how safe it is to walk along that street. To normalize the grading,
the average rank has been set to be zero. Streets with
positive rank are considered safe, those with negative rank are not.
Furthermore, a street corner in the city is considered
safe to stop and turn at if the average rank of all streets intersecting in that corner is positive,
otherwise it is dangerous.
The question is: Is it possible to walk from any safe street 
to any other safe street without ever turning at a dangerous corner or walking along 
a dangerous street? 

The answer is yes (see Figure~\ref{fig:crime}), since 
we are dealing with a rank $3$ matroid whose atoms are given by the streets, and whose rank $2$ flats are given by the corners, and hence the associated graph  $G_{1,2}$, restricted to positive elements to obtain $G_{1,2}^+$, is connected. 

\begin{figure}[htb] 
\centering 
 \includegraphics[width=0.7\linewidth]{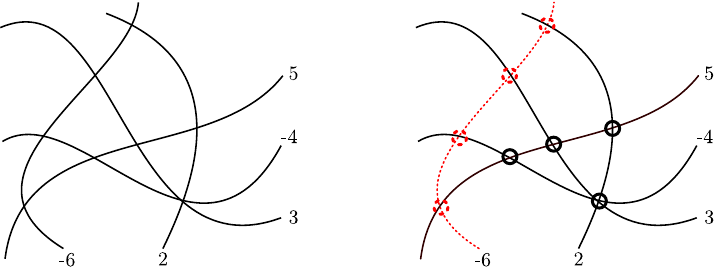}
\caption{\small A crime-ridden town, and the safe and unsafe corners inside it.} 
 \label{fig:crime}
\end{figure}

Note that also
the crooks have an advantage: they can access any unsafe street from their 
hideout without turning at a safe corner, or passing along a safe street, 
as also  $G_{1,2}^-$ is connected.

\subsection{Shellability.}
It remains to be seen whether our understanding of the combinatorial structure of filtered geometric lattices can be improved further. One natural question is whether a stronger statement can be made about the geometry of filtered lattices.

\begin{prb}
Let $(\LL, r, n, \om , t)$ be as in Theorem~\ref{mthm:pos_sum_geom_latt_cm}. Is it true that $\LL^{>t}$ is shellable?
\end{prb}

A positive answer would generalize earlier work of Bj\"orner \cite{Bj1} showing that every full geometric lattice is shellable, and by Wachs and Walker \cite{WachsWalker} dealing with the case when 
the weight $\omega$ has exactly one negative entry.

\subsection{Combinatorial Morse Theory.}\label{ssc:Forman} For an alternative approach to the conjecture of Mikhalkin and Ziegler one can use the combinatorial Morse theory of Forman \cite{FormanADV}. Intuitively speaking, combinatorial Morse theory is an incremental way to decompose a simplicial complex.
It enriches Whitehead's notion of cell collapses \cite{Whitehead} by the notion of \emph{critical cells}, which behave analogously to critical points in classical Morse Theory. The result is:

\begin{mthm}\label{mthm:pos_sum_geom_latt_dm}
Let $(\LL, r, n, \om)$ be as in Theorem~\ref{mthm:pos_sum_geom_latt_cm}, with $\omega\cdot[n]=0$. 
Then, there is a collection $\mr{C}$ of critical $(r-2)$-cells such that $\LL^{>0}-\mr{C}$ simplicially collapses to a point.
In particular, $\LL^{>0}$ is $(r-3)$-connected.
\end{mthm}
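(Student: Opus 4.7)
The plan is to build an explicit acyclic matching on the face poset of the order complex $\Delta(\LL^{>0})$ whose critical (i.e., unmatched) cells consist of a single $0$-cell together with a collection $\mr{C}$ of $(r-2)$-dimensional cells. By Forman's fundamental theorem of discrete Morse theory, any such matching induces a sequence of elementary simplicial collapses that reduces $\Delta(\LL^{>0})$ to a CW model with one vertex and $|\mr{C}|$ top cells attached. This immediately gives both conclusions of the theorem: removing $\mr{C}$ leaves a subcomplex that simplicially collapses to a point, and attaching cells of dimension $r-2$ to a vertex cannot destroy connectivity below $r-3$.

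To construct the matching, I would adapt the weight-based labeling used in the proof of Theorem~\ref{thm:bc}. After perturbing $\om$ so that its entries are pairwise distinct without changing $\LL^{>0}$, label each cover relation $F\lessdot F'$ in the full lattice of flats $\wh{\LL}$ by $\la(F,F')\defeq\om\cdot(F'\ssm F)$. Applying Chari's conversion of shellings into discrete Morse matchings to the resulting lexicographic order on maximal chains yields a matching on $\wh{\LL}$ whose critical cells are exactly the maximal chains whose label sequence is lexicographically first on every subinterval. The hypothesis $\om\cdot[n]=0$ is then invoked exactly as in Theorem~\ref{thm:bc}: the unique decreasing maximal chain of each relevant interval has both endpoints in $\wh{\LL}^{>0}$ and hence lies entirely in $\LL^{>0}$, so the matching restricts to $\LL^{>0}$ with all critical cells in the top dimension $r-2$. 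The induction on rank is driven by Lemma~\ref{lem:intervals}, which shows that each open interval in $\LL^{>0}$ is itself a filtered geometric lattice of smaller rank with an appropriately shifted weight, and the base case $r=2$ is the trivial $0$-dimensional complex of positive-weight atoms.

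The main obstacle is verifying that the weight labeling $\la$ is actually an EL-labeling of all of $\wh{\LL}$, rather than only of its Boolean intervals as used in Theorem~\ref{thm:bc}. In a general geometric lattice an interval need not be Boolean, and the candidate lexicographically-first chain of a non-Boolean subinterval may fail to exist or fail to be unique; this is precisely the content of the shellability question posed in Section~\ref{ssc:rem1}. A pragmatic workaround is to substitute Bj\"orner's classical atom-order EL-labeling of geometric lattices~\cite{Bj1}, which yields a Chari matching on the whole of $\wh{\LL}$, and then to verify that its restriction to $\LL^{>0}$ remains acyclic with all critical cells in dimension $r-2$---using the weight hypothesis $\om\cdot[n]=0$ combined with the inductive fiber argument of Theorem~\ref{mthm:pos_sum_geom_latt_cm} to control how chains leave and enter the filtered piece.
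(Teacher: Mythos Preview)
Your proposal has a genuine gap, and the paper's (sketched) approach is quite different from yours.

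You correctly identify the central obstacle yourself: the weight labeling $\la(F,F')=\om\cdot(F'\ssm F)$ is \emph{not known} to be an EL-labeling of a general geometric lattice $\wh{\LL}$. This is precisely the open problem the paper records in Section~\ref{ssc:rem1}. So your first approach assumes what is conjectured but unproven. Your proposed workaround---substitute Bj\"orner's atom-order EL-labeling and ``verify'' that its restriction to $\LL^{>0}$ behaves---does not close the gap either. Bj\"orner's labeling is built from an arbitrary linear order on atoms and carries no information about the weight $\om$; there is no reason matched pairs of chains under the resulting Chari matching should both lie in $\LL^{>0}$ or both lie outside it. The sentence ``verify that its restriction \ldots\ remains acyclic with all critical cells in dimension $r-2$'' is not a verification step: it is the entire content of the theorem. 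Finally, invoking the fiber argument of Theorem~\ref{thm:filtgl} cannot rescue this, because that argument (via Lemma~\ref{mlem:quillen}) produces only homotopy equivalences, not the elementary collapses the present theorem demands.

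The paper's own argument is only sketched, but it proceeds along a different axis. It uses Alexander duality for discrete Morse functions (as in \cite{A}): a discrete Morse function on a subcomplex of a sphere induces a dual discrete Morse function on the complementary complex, with critical cells in complementary dimensions. The point is to prove Theorem~\ref{mthm:pos_sum_geom_latt_dm} and the analogous collapse statement for the complement $\BB-\LL$ (filtered by $\om$) by a \emph{common} induction, so that at each inductive step the Alexander-dual pair of Morse functions is constructed simultaneously. This sidesteps the need for an EL-labeling of $\LL^{>0}$ altogether.
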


In comparison with Theorem~\ref{mthm:pos_sum_geom_latt_cm}, this result requires a stronger assumption (the total weight of $\omega$ is $0$).
But it also has a stronger conclusion, since it describes the combinatorial structure of $\LL^{>0}$ and not only its topological type. For the proof, one uses Alexander duality of combinatorial Morse functions as introduced in \cite{A}. This can be exploited to prove Theorem~\ref{mthm:pos_sum_geom_latt_dm} and the analogous theorem for the complement of $\LL^{>0}$ in $\BB$ by a common induction.

\subsection{General filtered geometric lattices.} Towards a more complete understanding of filtered geometric lattices, it remains to consider the case when the filtration parameter does not satisfy $t\le\min\{0,\om\cdot [n]\}$.

\begin{prb}
Characterize the topology of $\LL^{>t}$ for general $t$.\end{prb}

It would seem natural to expect that $\LL^{>t}$ is always {\em sequentially Cohen--Macaulay}, a notion introduced by Stanley 
to generalize Cohen--Macaulayness to nonpure complexes, cf.\ \cite{SCCA, BWW2}. This, however, is not the case.

\begin{ex}
Let us consider the matroid $M$ on ground set $[7]$, endowed with lattice of flats \[\LL\defeq  \{\{1\},\{2\},\{3\},\{4\}, \{5\},\{6\},\{7\}, \{1,2\}, \{6,7\}, \{1,3,6\}, \{1,4,7\}, \{2,4,6\}, \{2,5,7\}, \{3,4,5\}\},\]
see Figure~\ref{fig:matroid}.
Let us furthermore consider the weight $\om=(1,1,-3,-3,-3,1,1)$. Then 
\[\LL^{>0}=\{\{1\},\{2\},\{6\},\{7\}, \{1,2\}, \{6,7\}\},\]
which consists of two disjoint 1-dimensional complexes. Hence, $\LL^{>0}$ is not sequentially connected, and in particular not sequentially Cohen--Macaulay.
\begin{figure}[htb] 
\centering 
 \includegraphics[width=0.28\linewidth]{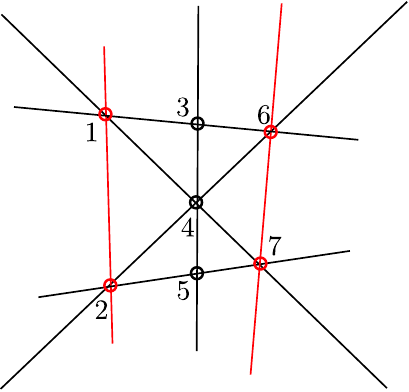} 
\caption{\small The matroid $M$, its proper flats and the filtered geometric lattice $\LL^{>0}$ in red.} 
 \label{fig:matroid}
\end{figure}
\end{ex}

We have established that $\BB$ is obtained from $\BB^{>t}$ (Theorem~\ref{thm:bc}), and that $\BB^{>t}$ is obtained from $\LL^{>t}$ (Theorem~\ref{thm:filtgl}), by successively attaching cells of dimension $\ge r-2$. One can reverse the reasoning to prove the following theorem:

\begin{thm}\label{thm:attaching_cells_complement}
Let $(\LL, r, n, \om, t)$ be as in Theorem~\ref{mthm:pos_sum_geom_latt_cm}. 
Then $\BB-\LL$ is obtained from $\BB^{\le t}-\LL^{\le t}$ by attaching cells of dimension $\le n-r-1$.
\end{thm}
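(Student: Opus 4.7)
The plan is to deduce Theorem~\ref{thm:attaching_cells_complement} from Corollary~\ref{mthm:rel_pos_sum_geom_latt_cm} via Alexander--Lefschetz duality inside the Boolean sphere $\Delta(\BB)\simeq S^{n-2}$, combined with a Fiber Lemma analysis. The idea is to ``reverse'' the cell-attaching reasoning from Theorems~\ref{thm:bc} and~\ref{thm:filtgl} by passing to complements.

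First I identify the relevant spaces with complements: by Lemma~\ref{lem:deletion_in_simplicial} we have $\BB-\LL\simeq\Delta(\BB)\setminus\Delta(\LL)$, $\BB-\LL^{\le t}\simeq\Delta(\BB)\setminus\Delta(\LL^{\le t})$, and $\BB^{\le t}-\LL^{\le t}\simeq\Delta(\BB^{\le t})\setminus\Delta(\LL^{\le t})$, fitting into a nested triple of subcomplexes of $\Delta(\BB)\simeq S^{n-2}$. By Corollary~\ref{mthm:rel_pos_sum_geom_latt_cm}, the pair $(\Delta(\LL),\Delta(\LL^{\le t}))$ is built by attaching cells of dimension exactly $r-2$, so its relative (co)homology is concentrated in degree $r-2$. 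Alexander--Lefschetz duality for pairs in $S^{n-2}$ then yields
\[
\widetilde{H}^{j}\bigl(\BB-\LL^{\le t},\,\BB-\LL\bigr)\ \cong\ \widetilde{H}_{n-2-j}\bigl(\LL,\LL^{\le t}\bigr),
\]
so the outermost pair $(\BB-\LL^{\le t},\BB-\LL)$ has (co)homology concentrated in degree $n-r$; equivalently, $\BB-\LL^{\le t}$ is obtained from $\BB-\LL$ by attaching $(n-r)$-cells.

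Second, I would analyze the inner inclusion $\BB^{\le t}-\LL^{\le t}\hookrightarrow\BB-\LL^{\le t}$ via Quillen's Fiber Lemma~\ref{mlem:quillen}, inducting on $|M|$. The elements of $\BB-\LL^{\le t}$ missing from $\BB^{\le t}-\LL^{\le t}$ are precisely those of $\BB^{>t}$. For each such $x$, Lemma~\ref{lem:intervals} identifies the lower fiber $(\BB^{\le t}-\LL^{\le t})_{<x}$ with a smaller filtered complement for the restricted matroid $M|_{\kappa(x)}$ (with inherited weight and shifted filtration parameter), to which the inductive hypothesis applies, while the upper ideal $(\BB-\LL^{\le t})_{>x}$ is controlled by the shellability of Theorem~\ref{thm:bc}. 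Combining via Lemma~\ref{mlem:quillen} yields $\widetilde{H}_i(\BB-\LL^{\le t},\BB^{\le t}-\LL^{\le t})=0$ for $i>n-r$.

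Third, I combine the two estimates via the long exact sequence of the triple $\BB^{\le t}-\LL^{\le t}\subset\BB-\LL\subset\BB-\LL^{\le t}$. For $i>n-r-1$, the middle relative group $H_i(\BB-\LL,\BB^{\le t}-\LL^{\le t})$ is sandwiched between vanishing groups, except at the critical degree $i=n-r$, where the connecting homomorphism $H_{n-r}(\BB-\LL^{\le t},\BB^{\le t}-\LL^{\le t})\to H_{n-r}(\BB-\LL^{\le t},\BB-\LL)$ can be shown injective by matching, through the Alexander dual of the lex shelling of Theorem~\ref{thm:bc}, the cells attached in Step~1 with cells attached in Step~2. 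Hence $H_i(\BB-\LL,\BB^{\le t}-\LL^{\le t})=0$ for $i>n-r-1$, and the Hurewicz and Whitehead theorems together with Lemma~\ref{lem:lk} then yield that $\BB-\LL$ is indeed obtained from $\BB^{\le t}-\LL^{\le t}$ by attaching cells of dimension $\le n-r-1$. The main obstacle will be the inductive Fiber Lemma analysis of Step~2, especially when $x\in\BB^{>t}$ is itself a flat of $M$, so that $\kappa(x)=x$ and the standard closure argument degenerates; an alternative Morse-theoretic route in the spirit of Theorem~\ref{mthm:pos_sum_geom_latt_dm} might provide a cleaner combinatorial matching directly.
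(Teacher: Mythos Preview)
Your route diverges from the paper's and has a genuine gap. The paper does not spell out the argument but states that it is ``entirely analogous to that of Theorem~\ref{thm:filtgl}'': one reruns the same induction on $n$ together with the Quillen fiber lemma (Lemma~\ref{mlem:quillen}), this time for the pair of complements rather than for $\LL^{>t}\hookrightarrow\BB^{>t}$. The paper then notes separately, as remark~(1) following the theorem, that Alexander--Lefschetz duality gives directly
\[
H_i\bigl(\BB-\LL,\ \BB^{\le t}-\LL^{\le t}\bigr)\ \cong\ H^{\,n-i-3}\bigl(\LL^{>t}\bigr),
\]
so the homology version is equivalent to Theorem~\ref{thm:filtgl} already. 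In particular, the correct dual object is $\LL^{>t}$ itself, not the pair $(\LL,\LL^{\le t})$ that you use; there is no need for the intermediate space $\BB-\LL^{\le t}$ at all.

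The gap is in your Step~1. You invoke Corollary~\ref{mthm:rel_pos_sum_geom_latt_cm} for the pair $(\LL,\LL^{\le t})$, but that corollary concerns pairs of the form $(\LL^{>t'},\LL^{>t})$ with $t'<t$; it gives no information about $\LL^{\le t}$. Under the hypothesis $t\le\min\{0,\omega\cdot[n]\}$ the lower filtration $\LL^{\le t}$ is \emph{not} covered by Theorem~\ref{mthm:pos_sum_geom_latt_cm}: rewriting $\LL^{\le t;\omega}$ as $\LL^{>-t;-\omega}$ would require $-t\le\min\{0,(-\omega)\cdot[n]\}$, i.e.\ $t\ge\max\{0,\omega\cdot[n]\}$, the opposite inequality. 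Indeed $\LL^{\le t}$ need not even be pure of dimension $r-2$ (e.g.\ for $U_{3,5}$ with weight close to $(-6,-6,-6,-6,14)$ and $t=\omega\cdot[5]$, the poset $\LL^{\le t}$ consists of isolated rank-$2$ flats). So your duality input is unjustified, and your Step~2---whose obstacle you already flag---would in any case be doing essentially the same work as the direct fiber-lemma proof the paper intends. If you want a duality proof, use the displayed isomorphism above and skip the triple.
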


The proof is entirely analogous to that of Theorem~\ref{thm:filtgl}, and will be left out here. We notice, however, several facts: 

(1) By Lemma~\ref{lem:deletion_in_simplicial} and Alexander/Poincar\'e-Lefschetz duality, we have an isomorphism
\[H_i(\BB-\LL,\BB^{\le t}-\LL^{\le t})\cong H^{n-i-3}(\LL^{> t}).\]
Theorem~\ref{thm:attaching_cells_complement} therefore provides an alternative proof for the homology version of Theorem~\ref{thm:filtgl} (and vice versa). 

(2) Furthermore, if $n-r\neq 2$, Theorems~\ref{thm:filtgl} and~\ref{thm:attaching_cells_complement} are equivalent by well-known general position arguments together with the aforementioned duality.

(3) It is possible to give a common proof of Theorem~\ref{thm:filtgl} and Theorem~\ref{thm:attaching_cells_complement}, using combinatorial Morse theory and Alexander duality of combinatorial Morse functions, cf.\ Section~\ref{ssc:Forman}.

(4) The pair $(\BB-\LL,\BB^{\le t}-\LL^{\le t})$, 
being the complement of an $(r-2)$-dimensional complex $\LL$ in the $(n-3)$-connected, $(n-2)$-dimensional pair $(\BB,\BB^{\le t})$, is $(n-r-2)$-connected by classical general position arguments. Together with the information that the pair is of dimension $\le n-r-1$, we immediately obtain the following
more precise version of Theorem~\ref{thm:attaching_cells_complement}:

\begin{cor}\label{cor:attaching_cells_complement}
Let $(\LL, r, n, \om, t)$ be as in Theorem~\ref{mthm:pos_sum_geom_latt_cm}. 
Then $\BB-\LL$ is obtained from $\BB^{\le t}-\LL^{\le t}$ by attaching $(n-r-1)$-dimensional cells.
\end{cor}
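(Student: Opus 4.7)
The strategy is to combine two complementary inputs about the pair $(\BB-\LL,\, \BB^{\le t}-\LL^{\le t})$: a dimension bound supplied by Theorem~\ref{thm:attaching_cells_complement}, and a connectivity bound obtained via general position inside the ambient pair $(\BB,\, \BB^{\le t})$. Once both are in hand, the corollary follows from the standard fact that a relative CW pair $(X,A)$ of relative dimension at most $d$ which is $(d-1)$-connected is, relative to $A$, homotopy equivalent to one built from $A$ by attaching only $d$-cells: cellular approximation lets one cancel all cells of dimension strictly below the connectivity bound, while no cells above dimension $d$ are present to begin with.

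The dimension input is exactly Theorem~\ref{thm:attaching_cells_complement}: the pair $(\BB-\LL,\, \BB^{\le t}-\LL^{\le t})$ admits a relative CW structure whose attached cells all have dimension at most $n-r-1$.

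For the connectivity input, note first that the order complex of $\BB$ is the barycentric subdivision of $\partial\Delta^{n-1}$, so $\BB\simeq S^{n-2}$. By Theorem~\ref{thm:bc}, $\BB^{>t}$ is $(n-3)$-connected. Using Lemma~\ref{lem:deletion_in_simplicial} to identify $\BB\smallsetminus \BB^{\le t}$ up to homotopy with $\BB^{>t}$, combinatorial Alexander duality on the $(n-2)$-sphere $\BB$ yields $\widetilde H_i(\BB^{\le t})=0$ for all $i$, so $\BB^{\le t}$ is acyclic and (being at least $1$-connected) contractible; hence $(\BB,\BB^{\le t})$ is $(n-3)$-connected. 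Now $\LL$ is a subcomplex of $\BB$ of dimension $r-2$. PL general position then allows us to deform any map of pairs $(D^i,\partial D^i)\to (\BB, \BB^{\le t})$, together with a null-homotopy into $\BB^{\le t}$, off the set $\LL$, provided $(i+1)+(r-2)<n-2$, i.e., $i\le n-r-2$. This is exactly the desired $(n-r-2)$-connectivity of $(\BB-\LL,\, \BB^{\le t}-\LL^{\le t})$.

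The main technical subtlety I anticipate lies in the general position step, since the deforming homotopy must avoid $\LL$ while keeping its terminal slice inside $\BB^{\le t}$. This is handled by a two-stage perturbation: first perturb the terminal slice within $\BB^{\le t}$ to avoid $\LL^{\le t}$ (using that $\dim\LL^{\le t}\le r-2$ supplies the requisite codimension inside $\BB^{\le t}$), then perform the ambient deformation in $\BB$ relative to this now-fixed slice. Once the connectivity bound is established, the minimal-CW argument from the opening paragraph concludes the proof.
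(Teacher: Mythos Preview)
Your proposal is correct and follows essentially the same approach as the paper, which derives the corollary by combining the dimension bound of Theorem~\ref{thm:attaching_cells_complement} with the observation that $(\BB-\LL,\BB^{\le t}-\LL^{\le t})$, being the complement of the $(r-2)$-dimensional complex $\LL$ in the $(n-3)$-connected, $(n-2)$-dimensional pair $(\BB,\BB^{\le t})$, is $(n-r-2)$-connected by classical general position. One small point: your parenthetical assertion that $\BB^{\le t}$ is $1$-connected is not justified by Alexander duality alone; it follows most cleanly from the remark after Theorem~\ref{thm:bc} that $\BB^{>t}$ is PL homeomorphic to a ball or sphere, so that its complement in $S^{n-2}$ is a ball (or empty).
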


In particular, we can extend the results on the homotopy type of geometric lattices to their complements.

\begin{mcor}\label{mcor:Comp_Bergman_Fan}
Let $\LL$ denote the lattice of flats of a matroid of rank $r\ge 2$, and let $\BB$ denote the proper part of the Boolean lattice on the ground set of $M$. Then $\BB-\LL$ is homotopy equivalent to a wedge of spheres of dimension $|M|-r-1$.
\end{mcor}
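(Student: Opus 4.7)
My strategy is to apply Corollary~\ref{cor:attaching_cells_complement} with a generic weight $\omega$ and filtration level $t$ chosen so that the base $\BB^{\le t}-\LL^{\le t}$ consists of a single vertex. Corollary~\ref{cor:attaching_cells_complement} then presents $\BB-\LL$ as a CW complex obtained from a point by attaching $(|M|-r-1)$-dimensional cells, and elementary CW-topology identifies such a complex with a wedge of $(|M|-r-1)$-spheres.

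To execute the plan, I would first dispose of the trivial case $r=|M|$, where the matroid is free, $\LL=\BB$, and $\BB-\LL=\emptyset$ is compatible with the empty wedge of $(-1)$-spheres. For $r<|M|$ there must exist a non-flat proper subset $\tau_0\subsetneq [|M|]$, necessarily of size $\ge 2$: if every proper subset were a flat, then the matroid closure operator would be the identity, forcing the matroid to be free of rank $|M|$ and contradicting $r<|M|$. I would then set $\omega_i=-1$ for $i\in\tau_0$ and $\omega_i=N$ for $i\notin\tau_0$, with $N$ chosen irrational and so large that $\omega\cdot [|M|]>0$. The irrationality of $N$ ensures genericity: the expression $\omega\cdot \sigma=-|\sigma\cap\tau_0|+N|\sigma\setminus\tau_0|$ vanishes only for $\sigma=\emptyset$. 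For any $t\in(-|\tau_0|,\,-|\tau_0|+1)$ one has $t\le 0=\min\{0,\omega\cdot [|M|]\}$ and $\tau_0$ is the unique proper subset with $\omega\cdot \sigma\le t$; since $\tau_0\notin\LL$, it follows that $\LL^{\le t}=\emptyset$ and $\BB^{\le t}-\LL^{\le t}=\{\tau_0\}$, a single vertex.

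With this setup, Corollary~\ref{cor:attaching_cells_complement} yields that $\BB-\LL$ is, up to homotopy, obtained from the one-point space $\{\tau_0\}$ by attaching cells of dimension $k:=|M|-r-1$. After attaching the first such cell one has either a $k$-sphere (for $k\ge 1$) or two discrete points (for $k=0$); each further cell is attached along $S^{k-1}$ to a wedge of $k$-spheres, which is $(k-1)$-connected, so its attaching map is null-homotopic and its addition produces an additional $S^k$-wedge summand. Consequently $\BB-\LL\simeq \bigvee S^{|M|-r-1}$. The main obstacle is the combinatorial step of producing a weight together with a parameter $t$ for which $\BB^{\le t}-\LL^{\le t}$ reduces to a single point; once this reduction is secured, the conclusion follows entirely from Corollary~\ref{cor:attaching_cells_complement} together with standard CW-topology.
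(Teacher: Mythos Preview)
Your proof is correct and follows the paper's approach: the corollary is stated there as an immediate consequence of Corollary~\ref{cor:attaching_cells_complement}, and your explicit choice of $(\omega,t)$ making $\BB^{\le t}-\LL^{\le t}$ a single vertex is a clean way to make that ``In particular'' precise. One minor slip: the assertion that a non-flat $\tau_0$ must satisfy $|\tau_0|\ge 2$ fails for matroids with parallel elements (a singleton need not be closed), but your construction and the inequality $t<0$ go through verbatim for any $|\tau_0|\ge 1$, so the argument is unaffected.
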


\subsection{An efficient model for complements of geometric lattices.} While we now understand, from a homotopical point of view, the complement of a geometric lattice in the Boolean lattice on the same support, it might be desirable to have a more explicit model available. For this purpose, we can use an idea similar to Salvetti \cite{Salvetti} and Bj\"orner--Ziegler~\cite{BjZie}, who described models for the complement of subspace arrangements. Throughout this remark, we use $(M,\LL, \BB, r, n)$ as in the previous sections.

A naive model for the complement $\BB {{\setminus}} \LL$ of $\LL$ in $\BB$ is clearly given by the complex $\BB-\LL$,
as shown by Lemma 
\ref{lem:deletion_in_simplicial}.
However, the complex $\BB-\LL$ can be of dimension up to $n-2$, while $\BB-\LL$ only has the homotopy type of a complex of dimension $\le n-r-1$, so that this model can be considered quite wasteful.

To obtain a more efficient model for a matroid $M$ on the ground set $[n]$, let $\NS$ denote the poset of 
\emph{non-spanning} proper subsets of $M$ ordered by inclusion. In other words, $\NS$ consists of the subsets $\sigma$ of $[n]$ with 
matroid rank $\rk (\sigma)<\rk (M)$. 
Now, as mentioned in Section~\ref{ssc:ct}, the matroid closure map 
\begin{align*}
\kappa:\NS\ \ &\rightarrow\ \ \LL,\\                                                                    
 x\ \ &\mapsto \ \ \bigvee x                                                                   
\end{align*}
 deformation retracts $\NS$ to the geometric lattice $\LL$ in $\BB$. We obtain:
\begin{thm}\label{thm:salvetti_efficient}
With $(\LL, \BB, r, n)$ as above, we have
\[\BB {{\setminus}} \LL\ \simeq\ \BB - \LL\ \simeq \ \BB - \NS.\]
Moreover, $\BB - \NS$ is an efficient model, in the sense that $\dim (\BB - \NS)\le n-r-1$.
\end{thm}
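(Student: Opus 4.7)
The plan is to verify the two homotopy equivalences and the dimension bound separately. The first equivalence $\BB\setminus\LL\simeq \BB-\LL$ is immediate from Lemma~\ref{lem:deletion_in_simplicial} applied to the simplicial pair $(\Delta(\BB),\Delta(\LL))$: the set-theoretic complement of $\Delta(\LL)$ in $\Delta(\BB)$ deformation retracts onto the combinatorial deletion.

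For the second equivalence $\BB-\LL\simeq \BB-\NS$, the starting point is that the matroid closure map $\kappa:\NS\rightarrow \LL$ is a closure operator in the sense of Section~\ref{ssc:ct}: it is order-preserving with $x\le \kappa(x)=\kappa^2(x)$ and image $\LL$, hence it provides a strong deformation retract of $\Delta(\NS)$ onto $\Delta(\LL)$. Alternatively one may verify this via Lemma~\ref{mlem:quillen}, since for each $F\in \LL$ the fiber $\kappa^{-1}(\LL_{\le F})=\{x\in \NS: x\subseteq F\}$ is a cone with apex $F$, hence contractible. A second application of Lemma~\ref{lem:deletion_in_simplicial} to the pair $(\Delta(\BB),\Delta(\NS))$ gives $\BB-\NS\simeq \Delta(\BB)\setminus \Delta(\NS)$, so the remaining task is to show that the inclusion $\Delta(\BB)\setminus \Delta(\NS)\hookrightarrow \Delta(\BB)\setminus \Delta(\LL)$ is a homotopy equivalence. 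I would deduce this by extending the strong deformation retract of $\Delta(\NS)$ onto $\Delta(\LL)$ to an ambient deformation retract of $\Delta(\BB)\setminus \Delta(\LL)$ onto $\Delta(\BB)\setminus \Delta(\NS)$, invoking PL regular neighborhood theory for the subcomplex pair $(\Delta(\NS),\Delta(\LL))$ sitting inside $\Delta(\BB)$.

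For the dimension bound, any chain of spanning proper subsets of $[n]$ has strictly increasing cardinalities in $\{r,r+1,\ldots,n-1\}$, and so contains at most $n-r$ elements, yielding simplex dimension at most $n-r-1$. The main obstacle is the ambient extension in the second step: the closure operator $\kappa$ does not extend to an order-preserving self-map of $\BB$, since for a non-spanning non-flat $x$ and a spanning proper superset $y\supseteq x$ one need not have $\kappa(x)\subseteq y$; likewise a naive Quillen-fiber argument on the poset inclusion $\BB-\NS\hookrightarrow \BB-\LL$ does not go through cleanly, because the upper fibers $\{y\in \BB-\NS : y\supseteq x\}$ for $x$ a non-spanning non-flat are homotopy equivalent to independence complexes of contracted auxiliary matroids and need not be contractible. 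One is therefore driven either to invoke regular neighborhood theory for the ambient deformation, or alternatively to replace the Quillen argument by a direct combinatorial construction in the spirit of Section~\ref{ssc:Forman}.
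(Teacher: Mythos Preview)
Your treatment of the first equivalence $\BB\setminus\LL\simeq\BB-\LL$ via Lemma~\ref{lem:deletion_in_simplicial} and of the dimension bound is correct and matches the paper exactly; the paper's written proof in fact only addresses the dimension bound, taking the homotopy equivalences as established by the preceding discussion (closure operator plus Lemma~\ref{lem:deletion_in_simplicial}).

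Where you go wrong is in dismissing the Quillen-fiber route for the second equivalence. The upper fibers \emph{are} contractible, and the reason is precisely that every $x\in\BB-\LL$ is a non-flat. Fix $x$ a non-spanning non-flat and choose $e\in\kappa(x)\setminus x$; such $e$ exists by non-flatness and is automatically not a coloop of $M$ (a coloop lies in the closure of a set only if it lies in the set itself). Then $[n]\setminus\{e\}$ is a proper spanning superset of $x$, and the map $y\mapsto y\setminus\{e\}$ is a well-defined interior operator on the fiber $\{y\in\BB-\NS:y\supseteq x\}$: indeed $y\setminus\{e\}\supseteq x$ since $e\notin x$, and $y\setminus\{e\}$ is still spanning because $x\subseteq y\setminus\{e\}$ forces $e\in\kappa(x)\subseteq\kappa(y\setminus\{e\})$. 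The image of this interior operator has maximum $[n]\setminus\{e\}$, so the fiber is contractible. (Equivalently, in your independence-complex description, $e$ is a loop of $M/x$, hence a coloop of $M^*|_{[n]\setminus x}$, hence a cone point of its independence complex.) Thus Quillen's Theorem~A applied to the inclusion $\BB-\NS\hookrightarrow\BB-\LL$ gives the homotopy equivalence directly, and there is no need to invoke PL regular neighborhood theory or discrete Morse theory.
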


\begin{proof} It remains only to verify the claim on the dimension; this follows immediately once we notice that every element of $\BB$ of cardinality $\le r-1$ is non-spanning.\end{proof}

It follows from the work of Rota \cite{Rota}
that the dimension bound of Theorem~\ref{thm:salvetti_efficient} is tight.

\subsection{Matroid duality is Alexander duality.}
The dimension of $\BB - \NS$ is bounded above by $n-r-1$, which coincides with the rank of the dual matroid $M^\ast$ of $M$. This suggests a connection between $\BB - \NS\simeq \BB - \LL$ and $\LL[M^{\ast}]$. Indeed, as was 
pointed out in \cite[Exercise 7.43, p. 278]{BjHomShell}, 
such a connection is provided by combinatorial 
Alexander duality (see e.g. \cite{BjT}). We have,
$$ \BB - \NS\\
\simeq\ \ \{[n]{{\setminus}} \sigma: \sigma\ \text{spanning in}\ M\}
\cong\ \ \{\tau: \tau\ \text{independent in}\ M^\ast\}
$$
The second complex is precisely the combinatorial Alexander dual of $\NS$, and the last isomorphism follows from standard matroid duality.

It is known that also the poset $\mc{I}$ of independent sets
of a matroid $M$ is shellable, and in particular $(r-2)$-connected, cf.\ \cite{BjHomShell}. 
Combined with the previous remark, this provides an alternative proof of Corollary~\ref{mcor:Comp_Bergman_Fan}.

\part{Lefschetz theorems for smooth tropical varieties}


\section{Geometry and combinatorics of polyhedral complexes and smooth tropical varieties.}\label{sec:polyhedra}

In this and the following section we review and develop the foundations of the geometry of polyhedral and tropical spaces, including their study via stratified Morse Theory.

\subsection{Open polyhedra and topology of restrictions}\label{ssec:restref} Let $A$ and $B$, $B \subset A$, denote two polyhedral complexes, such that for every face $b$ of $B$ there exists a unique minimal face $a$ with the property that $a\in A,\ a\notin B,\ b\subset a$. Then ${a}$ is the \emph{cofacet} of $b$ in $A$, and $O=A{{\setminus}} B$ is an \emph{open polyhedral complex}.
This condition implies in particular that a regular neighborhood of $B$ in $A$ is PL homeomorphic to $B\times [0,1]$, and that $A$ collapses (in the sense of Whitehead) onto $A-B$, cf.\ \cite{Whitehead}. The faces of $O$ are the faces of $A$, minus the faces of~$B$.

In general, there is little relation between a polyhedral complex and its restrictions. However, the following observation for restrictions of polyhedral complexes is useful to keep in mind for applications of stratified Morse theory.

\begin{prp}\label{prp:restriction_gives_def_retract}
Let $X$ denote a compact polyhedral complex in $\RR^d$, and let $C$ denote the complement of some open, convex set $K$ in $\RR^d$. Then $X\cap C=X{{\setminus}} K$ deformation retracts onto $\RS{X}{C}$.
\end{prp}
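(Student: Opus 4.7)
My plan is to construct the deformation retraction face by face via radial projection, and then glue the pieces by induction on dimension. Call a face $\sigma$ of $X$ \emph{bad} if $\sigma\not\subseteq C$, equivalently $\sigma\cap K\neq\emptyset$; the remaining (good) faces are exactly those comprising $\RS{X}{C}$. For each bad $\sigma$, the set $\sigma\cap K$ is a nonempty open subset of $\sigma$ and so meets $\rint{\sigma}$; fix a basepoint $p_\sigma\in\rint{\sigma}\cap K$. For $x\in\sigma\cap C$ set $\rho_x(t):=p_\sigma+t(x-p_\sigma)$, $t\ge 0$. Since $K$ is open and convex with $p_\sigma\in K$ and $x\notin K$, the set $\rho_x^{-1}(K)$ is a half-open interval $[0,t^*_x)$ with $t^*_x\in(0,1]$, and $\rho_x(t)\in C$ for every $t\ge t^*_x$. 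The compactness of $X\setminus K$, and hence of the closed subset $\sigma\cap C\subseteq X\setminus K$, precludes $x-p_\sigma$ from being a recession direction of $\sigma$, so $\rho_x$ exits $\sigma$ at some finite parameter $t^{(\sigma)}_x\ge 1$, at a point $r_\sigma(x):=\rho_x(t^{(\sigma)}_x)\in\partial\sigma\cap C$; the segment from $x$ to $r_\sigma(x)$ lies entirely in $\sigma\cap C$.

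This produces a face-wise deformation retraction $H_\sigma\colon(\sigma\cap C)\times[0,1]\to\sigma\cap C$ by $H_\sigma(x,s):=\rho_x\bigl(1+s(t^{(\sigma)}_x-1)\bigr)$, which slides $x$ along $\rho_x$ to $r_\sigma(x)$. For $x\in\partial\sigma\cap C$, the accessibility property of convex sets (no point of $\rint{\sigma}$ can lie strictly between $p_\sigma\in\rint{\sigma}$ and a segment endpoint $x\in\partial\sigma\cap\sigma$) forces $t^{(\sigma)}_x=1$, so $H_\sigma$ fixes $\partial\sigma\cap C$ pointwise. Continuity of $H_\sigma$ reduces to continuity of the exit-parameter function on $\sigma$ with respect to the ray direction, which is standard for closed convex polyhedra.

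To glue the per-face retractions I induct on the dimension $k$ of the skeleton $X^{(k)}$ of $X$. The base $k=0$ is trivial since $X^{(0)}\cap C=\RS{X^{(0)}}{C}$. For the inductive step, first apply all $H_\sigma$ simultaneously over bad $k$-faces $\sigma$, with the identity on the rest of $X^{(k)}\cap C$. The relative interiors of distinct $k$-faces of $X$ are pairwise disjoint open subsets of $X^{(k)}\setminus X^{(k-1)}$, and each $H_\sigma$ agrees with the identity on $\partial\sigma\cap C\subseteq X^{(k-1)}\cap C$, so the patched map is continuous and yields a retraction of $X^{(k)}\cap C$ onto $X^{(k-1)}\cap C\cup\{\,\sigma:\sigma\text{ a good }k\text{-face}\,\}$. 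Next, extend the inductive retraction of $X^{(k-1)}\cap C$ onto $\RS{X^{(k-1)}}{C}$ by the identity on each good $k$-face $\sigma$; since $\partial\sigma\subseteq\RS{X^{(k-1)}}{C}$ is pointwise fixed, the extension is continuous and produces a retraction onto $\RS{X^{(k)}}{C}$. Composing the two stages and iterating to $k=\dim X$ completes the proof. The essential role of the compactness hypothesis is to guarantee that the exit parameters $t^{(\sigma)}_x$ are finite, so that $r_\sigma(x)$ is defined on possibly unbounded faces; the principal bookkeeping concern is the continuity of the globally patched homotopy, which is handled precisely by the disjointness of face interiors and the boundary-fixing property of each $H_\sigma$.
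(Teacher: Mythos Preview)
Your proof is correct and follows essentially the same approach as the paper: radial projection from a point $p_\sigma\in\rint\sigma\cap K$ to retract each bad face $\sigma\cap C$ onto $\partial\sigma\cap C$, then an induction over the face structure to glue. The only organizational difference is that the paper peels off bad facets one at a time (top-down) while you push all bad $k$-cells down simultaneously and induct on skeleta; your treatment is in fact more careful about the compactness hypothesis and the boundary-fixing property than the paper's sketch.
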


\begin{proof}
If $A$, $B$ are convex sets (closed and open, respectively) in $\RR^d$ with a point of intersection $x$, then $A{{\setminus}} B$ deformation retracts onto $\partial A{{\setminus}} B$ via restriction of the radial projection 
\begin{align*}
A{{\setminus}} \{x\}\ \ &\longrightarrow\ \ \partial A\\
y\ \ &\longmapsto\ \ (x + \pos (y-x)	)\cap \partial A.
\end{align*}
We can now argue by induction on the dimension of faces of $X$: We claim that if $\sigma$ is any facet of $X$ that intersects $K$, then $\sigma$ deformation retracts onto $\partial \sigma{{\setminus}} K$.
 Therefore \[X\cap C=((X-\sigma) \cap C) \cup (\sigma \cap C)\] deformation retracts onto $(X-\sigma) \cap C$. With this procedure we can iteratively remove all faces of $X$ not in $C$ by deformation retractions. The claim follows. 
\end{proof}

 \subsection{Tangent fan, stars and links.}\label{sec:tangent} Let $X\subset \RR^d$ be a polyhedral space, and let $p\in X$ be any point. We use $\TT_p X$ to 
denote the or tangent space of $X$ at~$p$, and $\TT^1_p X$ is the restriction of $\TT_p X$ to unit vectors. As for polyhedra the tangent space is a fan, we shall simply refer to it as a tangent fan going forward. If $Y$ is any subspace of $X$, then $\RN_{(p,Y)} X$ denotes the subspace of the tangent fan spanned by vectors orthogonal to $\TT_p Y \subset \TT_p X$, and we define $\RN^1_{(p,Y)} X\defeq  \RN_{(p,Y)} X \cap \TT^1_p Y$.

If $X$ is polyhedral, and $\sigma$ is any face, then $\TT_p X$, $\TT^1_p X$, $\RN_{(p,\sigma)} X$ and $\RN^1_{(p,\sigma)} X$ are, up to ambient isometry, independent of the choice of $p$ in the relative interior of $\sigma$; we therefore omit $p$
whenever feasible and write simply $\TT_\sigma X$ etc.

Now, let $X$ be a polyhedral complex and let $\sigma$ be any face of $X$. The \emph{star} of $\sigma$ in $X$, denoted by $\St_\sigma X$, is the minimal subcomplex of $X$ that contains all faces of $X$ containing $\sigma$.

Let $\tau$ be a face of a polyhedral complex or fan $X$ containing a face $\sigma$, and assume that $\sigma$ is nonempty. Then the set $\RN^1_{\sigma} \tau$ of unit tangent vectors in $\RN^1_{\sigma} X$ pointing towards $\tau$ forms a spherical polytope in~$\RN^1_{\sigma} X$. 
The collection of all polytopes in $\RN^1_{\sigma} X$ obtained this way forms a polyhedral complex, denoted by 
$\Lk_\sigma X$, the \emph{(combinatorial) link} of $\sigma$ in $X$. We set $\Lk_\emptyset X\defeq  X$. Motivated by this, we shall also sometimes call $\RN^1_{\sigma} X$ the \emph{(geometric) link} of $\sigma$ in $X$; both 
types of links have the same underlying space, but $\Lk_\sigma X$ enjoys additionally a combinatorial structure.

\subsection{Morse functions on polyhedral and stratified spaces.}
If $\sigma$ is any face of a polyhedral complex, then we call its relative interior $\sigma^\circ$ a \emph{stratum} (or \emph{cell}).
Let now $\wt{f}:S\rightarrow \RR$ denote a function whose domain $S\subset \RR^d$ is open, and let $X$ denote any polyhedral space in $S$. A \emph{critical point} of $f\defeq \wt{f}_{|X}$ is a critical point of $f_{|\sigma^\circ}$ where $\sigma\in X$ is any face. In other words, $x$ is a critical point of $f$ in $X$ if for the unique stratum $\sigma^\circ$
of $X$ containing it, we have $\nabla{\wt{f}}\perp \TT_x \sigma^\circ$. \emph{Critical values} are the values of critical points under~$f$. 
We call $f$ a \emph{Morse function} on $X$ if 
\begin{compactenum}[\rm (1)]
\item $\wt{f}$ is smooth on $S$ and every stratum of $X$.
\item $f$ is proper, and the critical points of $f$ are finitely many and have distinct critical values.
\item All critical points are nondegenerate, i.e.,\ for every face $\sigma\in X$, and every critical point $x\in \sigma^\circ$, the Hessian of $f_{\sigma}$ at $x$ is non-singular.
\item Every critical point is the critical point in a unique stratum, i.e.,\ for every critical point $x$ of $f$ in a stratum $\sigma^\circ$, and for every proper coface $\tau$ of $\sigma$ in $X$, we have $\nabla{\wt{f}}\not\perp \TT_x \tau$.
\end{compactenum}
For open polyhedral complexes, we simply require that the gradient field is uniformly outwardly oriented at the boundary. Specifically, if $O=A{{\setminus}} B$ is an open polyhedral complex, then the restriction $f$ of $\wt{f}$ to $O$ is a \emph{Morse function} on $O$ if
\begin{compactenum}[\rm (1)]
\item $\wt{f}_{|A}$ is a Morse function on $A$, and 
\item for every $b\in B$, and the unique cofacet $a\in A{\setminus} B$ of $b$ in $A$,
\[ \langle \nu, \nabla \wt{f}(x)\rangle\ <\ 0\]
for the interior normal $\nu$ to $b$ in $a$.
\end{compactenum}
We shall call the last property \emph{uniform outward orientation}.

\subsection{The main lemma of stratified Morse Theory.}\label{ssc:strat} With this, we can state the main lemma of stratified Morse Theory, specialized to polyhedral spaces (i.e.\ underlying spaces of polyhedral complexes). 
\begin{thm}[Goresky--MacPherson {\cite[Part~I]{GM-SMT}}]\label{thm:smt_main_theorem}
Let $X$ denote a polyhedral space, and let $f=\wt{f}_{|X}:X\rightarrow \RR$ be a Morse function on $X$ as above. Then
\begin{compactenum}[\rm (1)]
\item If $(s, t]\subset \R$ is an interval containing no critical values of $f$, then $X_{\le s}\defeq  	f^{-1}(-\infty, s]$ is a deformation retract of $X_{\le t}$.
\item Suppose that $t$ is a critical value of $f$, $x$ the associated critical point and $s< t$ is chosen so that $(s, t]$ contains no further critical values of $f$. Then, the Morse data at $x$ (and therefore the change in topology from $X_{\le s}$ to $X_{\le t}$) is given by the product of tangential and normal Morse data of $f$ at~$x$. 
\end{compactenum}
\end{thm}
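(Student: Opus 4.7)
The plan is to follow the standard Goresky--MacPherson strategy, specialized to the polyhedral setting where regular neighborhoods of faces have explicit product structure. For Part~(1), the key tool is a stratum-preserving continuous vector field $V$ on $X_{\le t}\cap f^{-1}[s,t]$ with $V\cdot\nabla\wt f<0$, so that integrating $-V$ deformation retracts $X_{\le t}$ onto $X_{\le s}$. On each stratum $\sigma^\circ$ the restriction $f_{|\sigma^\circ}$ has nowhere-vanishing gradient in $f^{-1}(s,t]$ by hypothesis, so I set $V_\sigma=-\nabla (f_{|\sigma^\circ})$. To patch the $V_\sigma$ across cells of different dimensions, I would use the fact that a regular neighborhood of $\sigma$ in $X$ is PL-homeomorphic to $\sigma\times\mathrm{cone}(\Lk_\sigma X)$, and on this product neighborhood define $V$ as $V_\sigma$ in the first factor, extended smoothly via a partition of unity subordinate to the open stars of the faces of $X$. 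After rescaling so that orbits reach $\{f=s\}$ in bounded time, integration produces the desired retraction.

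For Part~(2), at a critical point $x\in\sigma^\circ$ the tangent fan splits as $\TT_x X=\TT_x\sigma\oplus\RN_{(x,\sigma)}X$. Condition~(4) of the Morse function definition guarantees $\nabla\wt f(x)\perp\TT_x\sigma$ and non-perpendicular to $\TT_x\tau$ for every proper coface $\tau$, so a smooth change of coordinates adapted to this splitting makes $\wt f$ decompose locally as a sum of a function of the tangential variables plus a function of the normal ones. The tangential summand has a nondegenerate critical point of index $\lambda=\mathrm{ind}_x f_{|\sigma^\circ}$, giving via the classical Morse lemma the tangential Morse data pair $(D^\lambda\times D^{\dim\sigma-\lambda},\,S^{\lambda-1}\times D^{\dim\sigma-\lambda})$. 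The normal summand has nonzero gradient at $x$, so restricting to the cone $\mathrm{cone}(\RN^1_{(x,\sigma)}X)$ and applying Part~(1) to the induced Morse function yields the normal Morse data as a pair involving the half-link above $f(x)$. A product tubular chart at $x$, coming from the polyhedral neighborhood structure, then identifies the local change in sublevel sets as the product of these two pairs.

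The hard step is the patching in Part~(1): the stratum-wise gradients $V_\sigma$ must blend into a single continuous, stratum-tangent field that is still gradient-like for $f$. This is essentially Mather's construction of control data, and the combinatorial bookkeeping near low-dimensional faces, where many cofaces of different dimensions meet, is the substantive content of the argument. The polyhedral setting tames the difficulty, since regular neighborhoods are honest metric products and one can prescribe the normal component of $V$ to vanish identically along $\sigma^\circ$, thereby sidestepping $C^\infty$-compatibility subtleties; once this controlled flow is in hand, the product-chart identification in Part~(2) follows by applying the same retraction machinery inside a cylindrical neighborhood of the critical point.
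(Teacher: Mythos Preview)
The paper does not prove this theorem: it is stated with the attribution ``Goresky--MacPherson \cite[Part~I]{GM-SMT}'' and no proof is given; it is invoked as background from the literature. There is therefore no argument in the paper to compare your sketch against.

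That said, your outline is a reasonable high-level summary of the Goresky--MacPherson approach in the polyhedral setting: Part~(1) via a stratum-tangent gradient-like vector field assembled from the $-\nabla(f_{|\sigma^\circ})$ using a partition of unity and product-structure regular neighborhoods, and Part~(2) via the tangential/normal splitting $\TT_x X = \TT_x\sigma \oplus \RN_{(x,\sigma)}X$ together with a local product chart. You correctly flag the construction of controlled vector fields (Mather's control data) as the substantive point. One small quibble: in your Part~(2) you write ``$\nabla\wt f(x)\perp \TT_x\sigma$'', which is the criticality condition~(3), not condition~(4); condition~(4) is the nondepravity condition ensuring the normal gradient is nonzero on cofaces, which is what drives the normal Morse data computation. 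Also, in the polyhedral case used here the superlevel sets are convex (Lemma~\ref{lem:smt_observation_on_level sets}), so the tangential index $\lambda$ is always $0$ and the tangential data reduces to $(\sigma,\partial\sigma)$; your more general index-$\lambda$ formulation is fine but not needed in this paper.
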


\subsection{Convex superlevel sets.} If $\wt{f}$ has convex superlevel sets, we can easily work out the normal and tangential Morse data.
\begin{lem}\label{lem:smt_observation_on_level sets}
With the notation as in Theorem~\ref{thm:smt_main_theorem}, let us assume that for every critical value $t$ of $f$, $\wt{f}^{-1}[t,\infty)$ is closed and convex. 
Let $x$ be a critical point of $f$, let $\sigma^\circ$ denote its stratum and 
assume that $s< t=f(x)$ is chosen so that $(s, t]$ contains no further critical values of $f$. Then 
\begin{compactenum}[\rm (1)]
\item the tangential Morse data at $x$ is given by $(\sigma,\partial \sigma)$, and
\item the normal Morse data at $x$ is given by \[\big(\CO(\RN^1_\sigma (X\cap f^{-1}(-\infty,t])),\RN^1_\sigma (X\cap f^{-1}(-\infty,t])\big).\]
\end{compactenum}
\end{lem}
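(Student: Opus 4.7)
The plan is to compute the tangential and normal Morse data separately, using the quasi-concavity of $\wt{f}$ encoded in the convexity of its superlevel sets, and then to assemble them via Theorem~\ref{thm:smt_main_theorem}(2). The key observation is that because $\wt{f}^{-1}[t,\infty)$ is convex and $x$ is a nondegenerate critical point of $f|_{\sigma^\circ}$, the point $x$ lies on the boundary of this convex set, and its supporting hyperplane there is normal to $\nabla\wt{f}(x)$; since $\nabla\wt{f}(x)\perp \TT_x\sigma$, this hyperplane contains the affine span of $\sigma$ through $x$.

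For part~(1), quasi-concavity combined with this supporting hyperplane forces $f|_\sigma\le t$ globally on $\sigma$, with equality only at $x$ by nondegeneracy, so $x$ is a strict global maximum. Consequently $\sigma\cap f^{-1}(-\infty,t]=\sigma$, while for $s$ just below $t$ the set $\sigma\cap f^{-1}(-\infty,s]$ is $\sigma$ with a small open Euclidean disk around $x$ removed, and as such deformation retracts onto $\partial\sigma$. This identifies the tangential Morse data with $(\sigma,\partial\sigma)$.

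For part~(2), I would work in the affine normal slice $N_x=x+\RN_{(x,\sigma)}\RR^d$. Unless $\sigma$ is top-dimensional (a degenerate case in which $\RN^1_\sigma X=\emptyset$ and the normal data trivializes to $(\text{point},\emptyset)$), condition~(4) in the definition of a Morse function forces $\nabla\wt{f}(x)\ne 0$, and since this gradient lies in $N_x$, the restriction $\wt{f}|_{N_x}$ has nonzero gradient at $x$. By the convexity and tangency noted above, $\wt{f}^{-1}(-\infty,t]\cap N_x$ agrees near $x$ with the affine half-space cut out by the tangent hyperplane to $\wt{f}^{-1}(t)$ at $x$. Locally $X\cap N_x$ is the cone over $\RN^1_\sigma X$ with apex $x$, and the intersection of a cone with a half-space through its apex is itself a cone, whose link is the truncation $\RN^1_\sigma X\cap \wt{f}^{-1}(-\infty,t]=\RN^1_\sigma(X\cap f^{-1}(-\infty,t])$; the subset corresponding to $f\le s$ loses the apex and deformation retracts onto the same link. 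Thus the normal Morse data is $(\CO(L),L)$ for $L=\RN^1_\sigma(X\cap f^{-1}(-\infty,t])$.

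The main technical subtlety, I expect, will be to ensure that these local descriptions hold \emph{on the nose} rather than only up to homotopy, so that the descending link in the normal direction is literally the set $\RN^1_\sigma(X\cap f^{-1}(-\infty,t])$ appearing in the statement. This I would handle by exploiting the exact tangency between the boundary of $\wt{f}^{-1}[t,\infty)$ and the affine span of $\sigma$: radial rescaling toward $x$ preserves the truncation, so the identification of the truncated normal cone with the cone over the truncated unit-sphere link is canonical rather than approximate. With both factors in hand, Theorem~\ref{thm:smt_main_theorem}(2) assembles them into the full Morse data at $x$.
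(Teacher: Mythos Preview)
Your approach is correct and follows the same two-step structure as the paper's proof: determine the nature of the critical point of $f|_{\sigma^\circ}$ for part~(1), and identify the normal Morse data as the cone pair over the lower half-link for part~(2).

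In fact, for part~(1) your argument is more accurate than the paper's. The paper asserts that $f|_{\sigma^\circ}$ takes a \emph{minimum} at $x$; this is a slip, since a minimum would give tangential Morse data $(\sigma,\emptyset)$ rather than $(\sigma,\partial\sigma)$. Your observation that convexity of $\wt{f}^{-1}[t,\infty)$, together with the supporting hyperplane containing $\aff\sigma$, forces $x$ to be a (strict) \emph{maximum} of $f|_\sigma$ is precisely what is needed, and your argument that $K\cap\aff\sigma=\{x\}$ via nondegeneracy is sound. For part~(2), the paper simply cites \cite[Part~I, Section~3.9]{GM-SMT} and notes that the form of the normal Morse data holds without the convexity hypothesis; your unpacking of that citation (normal slice, conical structure, half-space truncation) is correct and adds clarity. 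One small point: the term ``quasi-concavity'' in your argument is a slight overstatement, since the hypothesis only gives convexity of superlevel sets at \emph{critical} values; but your actual reasoning uses only the single superlevel set $\wt f^{-1}[t,\infty)$, so nothing is lost.
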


\begin{proof}
Since $\wt{f}^{-1}[t,\infty)$ is closed, smooth and convex for every critical value $t$, the Morse function $f_{|\sigma^\circ}$ takes a minimum at $x\in \sigma^\circ$. Claim $(1)$ follows. Claim $(2)$ holds regardless of the requirement on superlevel sets, cf.\ \cite[Part~I, Section~3.9]{GM-SMT}.
\end{proof}

\section{Basic notions in tropical geometry.}\label{ssc:trop} Here we give a brief overview of the essentials of tropical geometry; for more information, we refer to \cite{Gathmann, ms, RGST, SpeyerSturmfels}.

Let $\TR\defeq  [-\infty,\infty)=\RR\cup\{-\infty\}$, the tropical numbers. $\TR$ is a semiring endowed with the \emph{(tropical) addition} $\tp:\TR\times \TR \rightarrow\TR$ and \emph{(tropical) multiplication} $\tm:\TR\times \TR \rightarrow\TR$ defined as \[a\tp b\defeq  \max\{a,\, b\}\qquad \text{ and } \qquad a\tm b\defeq  a\, +\, b.\]

The \emph{tropical affine space} $\TR^d$ of dimension $d$ is the space $[-\infty,\infty)^d$. The \emph{fine sedentarity} $\Sed:\TR^d\rightarrow 2^{[d]}$ of a point $x\in \TR^d$ is the set $\{i\in [d]: x_i= -\infty\}$. The \emph{sedentarity} $\sed:\TR^d\rightarrow \NA$ is defined as $\sed(x)\defeq |\Sed(x)|$. A point 
or set of sedentarity $0$ is also called \emph{mobile}. The \emph{mobile part} of a subset $A$ in $\TR^d$ is also denoted by $A_{|\mo}$, and 
we write $A_{|\Sed=I}$ to restrict to the subset of fine sedentarity~$I$.
In particular, we have a decomposition
\[\TR^d=\bigcup_{I\subset [d]} \RR^I\times (-\infty)^{[d]{{\setminus}} I}.\]
We define \emph{tropical projective $d$-space} as
\[\TP^d\defeq  \bigslant{\TR^{d+1}{{\setminus}} (-\infty)^{d+1}}{x\sim \lambda \tm x}.\]
Tropical projective space $\mathbb{TP}^{d}$ can be obtained as a union of $d+1$ copies of tropical affine space $\TR^{d}$, restricted to nonpositive coordinates: If $i\in [d+1]$ is any element, then the set $\wt{S}_i\defeq  \{x\in \TR^{d+1}: x_i=\max_{j\in [d+1]} x_j\}$ projects to the copy of $\TR^{d}_{\le 0}$ spanned by $\{x_j: j\in [d+1]{{\setminus}} \{i\}\}$. The notions of sedentarity and mobility therefore naturally extend to tropical projective space. 

\begin{rem}
We extend notions from affine tropical geometry to projective tropical geometry using this decomposition. Therefore, we restrict the discussion to the affine definitions for the remainder of this introduction to tropical geometry.
\end{rem}

\subsection{Polyhedral spaces in $\TR^d$.}\label{sec:pT} Recall that tropical space $\TR^d$ is stratified into copies $\RR^I\times (-\infty)^{[d]{{\setminus}} I}$, $I\subset [d]$ of euclidean vector spaces.
A $d$-\emph{polyhedron} $P$ in $\TR^d$ is the closure of a polyhedron in $\RR^d$. To ensure that the polyhedra are sufficiently nice at infinity, 
we require that for every face $Q=P\cap (\TR^J\times (-\infty)^{[d]{{\setminus}} J})$, $J\subset [d]$, and every $I\subsetneq J$, we have
\begin{equation}\label{eq:reg}
Q\cap (\RR^I\times (-\infty)^{[d]{{\setminus}} I})\ =\ \varnothing \quad \text{or} \quad \dim (Q\cap (\RR^I\times (-\infty)^{[d]{{\setminus}} I}))\ =\ \dim Q - |J| + |I|.
\end{equation}
A \emph{chamber complex} is a polyhedral space that divides the complement into pointed polyhedra.

We say a chamber complex $H$ is \emph{ample} if for every cell $Q$ (the cell being the relative interior of a face) in the natural stratification of $\mathbb{A}^d$, we have
\begin{compactitem}[$\circ$]
\item $H\cap Q$ is a chamber complex for $Q$, that is, it parts $Q$ into pointed polyhedra, and
\item every such component in the closure of $Q$ (that is, in the closed face corresponding component) is combinatorially equivalent to an orthant.
\end{compactitem} 
Note that in the case of $\mathbb{A}^d\cong\R^d$, $\TR^d$ or $\TP^d$, chamber complexes are automatically ample.
 Moreover, note that the first condition may equivalently be restated as: Every tropical curve intersects $H$.

As this boils down to a condition on the individual components of the complement of $H$, we call the individual components ample as well, that is, 
we say a polyhedron $P$ in $\mathbb{A}^d$ is \emph{ample} if, for every cell $Q$ as above it intersects, the restriction to the cell is a pointed polyhedron, and the relative interior of $P$ intersected with the closure of $Q$ is combinatorially an orthant.

In particular, if $P$ contains a point of sedentarity $[d]{{\setminus}} J$, then its affine span contains rays in direction of the basis vectors $(e_j)_{j\in [d]{{\setminus}} J}$. The affine span of $P$ is therefore generated by the intersection $P\cap (\TR^J\times (-\infty)^{[d]{{\setminus}} J})$ and the vectors $(e_j)_{j\in [d]{{\setminus}} J}$. 

A \emph{polyhedral complex} $\Sigma$ in $\TR^d$ is a collection of polyhedra in $\TR^d$ with the property that the intersection of any two polyhedra is a face of both. The notions of restriction, link, star etc.\ for polyhedral complexes in $\RR^d$ naturally extend to the tropical case.

\subsection{Bergman fans of matroids.} Associated to every matroid $M$ (which, we remind here, we assume to be loopless throughout) is the 
{\em Bergman fan} \cite{ArdilaKlivans, Bergman, SturmfelsBergman}. We identify the elements 
of the ground set $[n]$ of $M$ with a generating circuit of lattice vectors $e_1,\cdots,e_n$ in $\mathbb{Z}^{n-1}\subset\RR^{n-1}$ such that $\sum_{i=1}^n e_i=0$. If $F$ is any subset of $[n]$, we define
$e_F\defeq \sum_{e_i\in F} e_i$. Moreover, if $\mbf{C}=F<G<H<\cdots$ is any chain in $\LL$ (the lattice of flats  of $M$), then \[\pos(\mbf{C})\defeq  \pos\{e_{F},e_{G},e_{H},\cdots\}.\]
The \emph{Bergman fan} $\B(M)$ of $M$ is the fan
\[\B(M)\defeq  \{\pos(\mbf{C}):\mbf{C}<[n] \text{ increasing chain in } \LL\}.\]
We observe a useful property for later:

\begin{lem}[Balancing property]\label{lem:balance}
Let $\mbf{C}$ be a maximal chain in $\LL$, and let $F$ denote any element of $\mbf{C}$. Then $\B$ is balanced with unit (and in particular positive) weights at $\pos(\wt{\mbf{C}})$, where $\wt{\mbf{C}}\defeq  \mbf{C}{\setminus} F$, i.e.,
\[\sum_{\substack{G\in \LL\\ G\cup \wt{\mbf{C}}\ \text{maximal chain}}} e_G\ \in\ \lin(\wt{\mbf{C}}).\]
\end{lem}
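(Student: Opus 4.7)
The plan is to reduce this to a simple calculation by parametrizing the maximal chain and exploiting the fact that in a geometric lattice, every rank-$2$ interval is a "projective line".

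Let me write the maximal chain as $\widehat{0} = F_0 \lessdot F_1 \lessdot \cdots \lessdot F_{r-1} \lessdot F_r = \widehat{1}$ (with $F_0, F_r$ the improper bounds of $\widehat{\mathcal{L}}$, corresponding to $e_\emptyset = 0$ and $e_{[n]} = 0$ by the circuit condition). Up to relabeling, $F = F_i$ for some $1 \le i \le r-1$, so $\widetilde{\mathbf{C}} = \{F_1, \ldots, F_{r-1}\} \setminus \{F_i\}$. The flats $G$ that can replace $F$ to complete a maximal chain are precisely the elements of the open interval $(F_{i-1}, F_{i+1})$, i.e., the \emph{atoms} of the rank-$2$ interval $[F_{i-1}, F_{i+1}]$. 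Call these atoms $G_1, \ldots, G_k$.

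The first key observation is the partition property: the interval $[F_{i-1}, F_{i+1}]$, being a rank-$2$ interval in a geometric lattice, is isomorphic to the lattice of flats of the simple rank-$2$ matroid $(M/F_{i-1})|_{F_{i+1}\setminus F_{i-1}}$, whose ground set $F_{i+1} \setminus F_{i-1}$ is partitioned by the parallel classes $G_1 \setminus F_{i-1}, \ldots, G_k \setminus F_{i-1}$. Since $F_{i-1} \subset G_j$ as subsets of $[n]$, we may split
\[e_{G_j}\ =\ e_{F_{i-1}} + e_{G_j \setminus F_{i-1}}.\]

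Summing and using the partition:
\begin{align*}
\sum_{j=1}^{k} e_{G_j}\ &=\ k\, e_{F_{i-1}}\ +\ \sum_{j=1}^k e_{G_j \setminus F_{i-1}}\\
&=\ k\, e_{F_{i-1}}\ +\ e_{F_{i+1} \setminus F_{i-1}}\\
&=\ (k-1)\, e_{F_{i-1}}\ +\ e_{F_{i+1}}.
\end{align*}
It remains to verify that the right-hand side lies in $\lin(\widetilde{\mathbf{C}}) = \lin\{e_{F_j} : j \neq i,\ 1 \le j \le r-1\}$. For $2 \le i \le r-2$, both $F_{i-1}$ and $F_{i+1}$ are proper flats distinct from $F_i$, so both $e_{F_{i-1}}$ and $e_{F_{i+1}}$ appear in $\widetilde{\mathbf{C}}$. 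In the boundary cases $i = 1$ (so $F_{i-1} = \widehat{0} = \emptyset$, giving $e_{F_{i-1}} = 0$) or $i = r-1$ (so $F_{i+1} = \widehat{1} = [n]$, giving $e_{F_{i+1}} = 0$ by the circuit condition $\sum e_\ell = 0$), the offending term vanishes and the remaining term lies in $\lin(\widetilde{\mathbf{C}})$.

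The main (minor) obstacle is simply the bookkeeping around the boundary cases where $F$ is the bottom or top element of $\mathbf{C}$; but these are exactly where the circuit relation $\sum_{\ell=1}^n e_\ell = 0$ is used to make the unused term vanish. The conceptual content is entirely in the partition of $F_{i+1} \setminus F_{i-1}$ by the parallel classes in the rank-$2$ contraction, which is standard geometric lattice combinatorics.
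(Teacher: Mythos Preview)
Your proof is correct and is precisely an unpacking of the paper's one-line argument: the ``lattice partitioning axiom'' invoked there is exactly the fact that the atoms $G_1,\dots,G_k$ of the rank-$2$ interval $[F_{i-1},F_{i+1}]$ partition $F_{i+1}\setminus F_{i-1}$, which is what drives your computation $\sum_j e_{G_j}=(k-1)e_{F_{i-1}}+e_{F_{i+1}}$. The boundary cases and the use of the circuit relation $e_{[n]}=0$ are handled correctly.
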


\begin{proof} 
This follows at once from the lattice partitioning axiom for the lattice of flats. 
\end{proof}

A second useful observation is to note that every Bergman fan is also locally a Bergman fan:
\begin{lem}\label{lem:intersections}
Let $\B(M)$ denote a Bergman fan, and let $\rho=\rho_F$ denote the ray of a flat $F$ of $M$. Then both $\RN_{\rho} \B$ and $\TT_{{\rho}} \B$ are themselves Bergman fans of matroids.
\end{lem}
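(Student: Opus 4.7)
The strategy is to identify, combinatorially and then linearly, the cones of $\B(M)$ that contain the ray $\mr{r}=\mr{r}_F$ with the cones of a Bergman fan of a suitable direct-sum matroid. Concretely, I claim
\[
\RN_{\mr{r}}\B(M)\ \cong\ \B(M|F\,\oplus\, M/F),\qquad \TT_{\mr{r}}\B(M)\ \cong\ \B\bigl((M|F\,\oplus\, M/F)\,\oplus\, U_{1,1}\bigr),
\]
where $M|F$ is the restriction of $M$ to $F$, $M/F$ is the contraction of $F$ in $M$, and $U_{1,1}$ is a single coloop accounting for the direction of $\mr{r}$ itself.

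First I would unwind the combinatorial side. The cones of $\B(M)$ containing $\mr{r}_F$ are exactly $\pos(\mbf{C})$ for chains $\mbf{C}\subset\LL$ with $F\in\mbf{C}$. Every such chain decomposes uniquely as $\mbf{C}=\mbf{C}_1\cup\{F\}\cup\mbf{C}_2$, with $\mbf{C}_1$ a chain in $\LL_{<F}$ and $\mbf{C}_2$ a chain in $\LL_{>F}$. These two intervals are precisely the (proper parts of the) lattices of flats of $M|F$ and $M/F$ respectively. Hence at the level of indexing chains we get the product lattice underlying $\B(M|F\oplus M/F)$, and the extra single element $\{F\}$ accounts for the coloop.

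Next I would verify this bijection at the linear level. Quotienting $\lin\B(M)=\{x\in\R^{n-1}\}$ by $\lin(\mr{r}_F)=\R e_F$, the images $\bar e_i$ for $i\in F$ satisfy the single relation $\sum_{i\in F}\bar e_i=\bar e_F=0$, so they form a circuit of the type defining $\B(M|F)$; and the images $\bar e_j$ for $j\notin F$ inherit the circuit relations giving $\B(M/F)$, with no relations mixing the two blocks. This is exactly the defining linear setup for the direct-sum Bergman fan $\B(M|F\oplus M/F)$, matching the combinatorial bijection cone by cone. Reintroducing $\R\cdot e_F$ gives $\TT_{\mr{r}}\B(M)$ as the product of this with a line, i.e.\ the Bergman fan after adjoining a coloop.

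I expect the main obstacle to be purely bookkeeping: checking that the natural identification of $\bar e_i$ with the circuit vectors prescribed for $M|F$ (resp.\ $M/F$) is consistent, and that the splitting into ``directions from flats below $F$'' and ``directions from flats above $F$'' really gives the direct sum rather than some twisted product. Once that linear-algebra verification is in place, the identification of posets of chains is immediate, and the lemma follows.
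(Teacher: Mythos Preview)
Your strategy coincides with the paper's: decompose chains through $F$ as $\mbf{C}_1\cup\{F\}\cup\mbf{C}_2$ and identify $\RN_{\mr{r}}\B(M)$ with $\B(M|F)\times\B(M/F)$, then recover $\TT_{\mr{r}}\B(M)$ as that product times a line. The matroid labels, however, are off by a line factor. Under the paper's convention (the $e_i$ form a \emph{single} circuit in $\R^{m-1}$), the underlying space of $\B(N\oplus L)$ is $\B(N)\times\B(L)\times\R$, so $\B(M|F\oplus M/F)$ sits in $\R^{n-1}$ and is actually $\TT_{\mr{r}}\B(M)$, not $\RN_{\mr{r}}\B(M)$; your second formula then overshoots by another factor of~$\R$. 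Your own linear-algebra check (two independent circuits in $\R^{n-2}$, one on $F$ and one on $[n]\setminus F$) is not the defining data for the direct sum but precisely that of the product $\B(M|F)\times\B(M/F)$; the paper realizes this product as the Bergman fan of the \emph{parallel connection} of $M|F$ and $M/F$, and obtains $\TT_{\mr{r}}\B$ by further multiplying with $\B(\BB[2])\cong\R$. With parallel connection in place of direct sum, your argument is the paper's.
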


Note that this does not hold with respect to the combinatorial product structure on the product of Bergman fans; it is a statement about their underlying spaces only.
We shall more generally observe:

\begin{lem}\label{lem:Bergmanp}
The (underlying space of the) product of two Bergman fans $\B(N),\ \B(L)$ is a Bergman fan.
\end{lem}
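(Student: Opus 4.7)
The plan is to realize $\B(N)\times\B(L)$ as a normal cone inside a larger Bergman fan, and then to invoke the preceding lemma. Concretely, set $M := N \oplus L$, the direct sum of matroids, with ground set $E_N \sqcup E_L$ and rank $\rk N + \rk L$. One may assume both $N$ and $L$ are nontrivial, since the degenerate cases are immediate.

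The subset $E_N \subset E_M$ is a proper flat of $M$, and moreover $M_{|E_N} = N$ and $M/E_N = L$. Accordingly, proper flats of $M$ strictly below $E_N$ are exactly the elements of $\LL(N)$, while proper flats strictly above $E_N$ are precisely $\{E_N \cup F_L : F_L \in \LL(L)\}$. A chain of $\LL(M)$ passing through $E_N$ therefore decomposes uniquely as $\mbf{C}_N < E_N < E_N \cup \mbf{C}_L$, for chains $\mbf{C}_N$ in $\LL(N)$ and $\mbf{C}_L$ in $\LL(L)$, with every such pair arising this way.

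Next, I would analyze the quotient of $\St_{\mr{r}_{E_N}}\B(M)$ by the lineality $\RR \cdot e_{E_N}$. Using the ambient relation $\sum_{i \in E_M} e_i = 0$ together with $e_{E_N \cup F_L} = e_{E_N} + e_{F_L}$ (since $E_N$ and $F_L$ are disjoint), the quotient $\RR^{|E_M|-1}/\RR e_{E_N}$ is canonically isomorphic to $\RR^{|E_N|-1} \oplus \RR^{|E_L|-1}$, where the two summands are spanned by the images of $\{e_i : i \in E_N\}$ and $\{e_j : j \in E_L\}$, respectively. Under this identification, the cone $\pos\{e_{F^1_N}, \ldots, e_{F^{k_N}_N}, e_{E_N}, e_{E_N \cup F^1_L}, \ldots, e_{E_N \cup F^{k_L}_L}\}$ projects to $\pos(\mbf{C}_N) \times \pos(\mbf{C}_L)$, and taking the union over all chains through $E_N$ yields $\RN_{\mr{r}_{E_N}}\B(M) = \B(N) \times \B(L)$ as underlying spaces.

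By the preceding lemma, $\RN_{\mr{r}_{E_N}}\B(M)$ is (the underlying space of) a Bergman fan, and hence so is $\B(N) \times \B(L)$. The main bookkeeping obstacle is verifying the quotient isomorphism: one must check that the two sum-to-zero relations on the target factors are correctly induced from the single sum-to-zero relation in $\RR^{|E_M|-1}$ after reducing modulo $e_{E_N}$, but this is routine once the candidate identification is written down.
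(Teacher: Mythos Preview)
Your computation that $\RN_{\mr{r}_{E_N}}\B(N\oplus L)=\B(N)\times\B(L)$ as underlying spaces is correct; this is exactly the identity the paper exploits in the proof of Lemma~\ref{lem:intersections}. The problem is the final step. You invoke ``the preceding lemma'' (Lemma~\ref{lem:intersections}) to conclude that $\RN_{\mr{r}_{E_N}}\B(N\oplus L)$ is a Bergman fan, but in the paper's logical order that lemma is only \emph{stated} before the product lemma; its \emph{proof} comes afterwards and depends precisely on the product lemma you are trying to establish. So as written your argument is circular.

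To break the circle you would have to exhibit, directly, a matroid whose Bergman fan has underlying space $\RN_{\mr{r}_{E_N}}\B(N\oplus L)$. Tracing through what that matroid must be, one arrives at the parallel connection of $N$ and $L$ --- which is exactly the paper's one-line proof. In other words, the detour through the direct sum $N\oplus L$ does not save any work: once you remove the circular appeal to Lemma~\ref{lem:intersections}, the remaining task is the same construction the paper gives. The paper then turns around and uses the product lemma to deduce Lemma~\ref{lem:intersections} via $\RN_{\mr{r}_F}\B(M)=\B(M_{|F})\times\B(M/F)$, reversing the direction of implication you attempted.
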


\begin{proof}
The desired matroid is obtained as the parallel connection $M$ of $N$ and $L$, cf.\ \cite{BT}. It is obvious that $\B(N) \times \B(L)$ is contained in $\B(M)$. To see the converse, note that both fans are of the same dimension $\rk(N)+\rk(L)-2$, and admit a unique Minkowski weight in degree $\rk(N)+\rk(L)-2$ that is nowhere zero.
\end{proof}

\begin{proof}[\textbf{Proof of Lemma~\ref{lem:intersections}}]
Let $M_{|F}$ denote the restriction of $M$ to $F$, and $M/F$ the contraction of $F$ in $M$. Then $\RN_{\rho} \B$ is obtained as the product of $\B(M_{|F})$ and $\B(M/{F})$.
Finally, $\TT_{{\rho}} \B$ is obtained as the product of $\B(\BB[2])$ with $\RN_{\rho} \B$.\end{proof}

\subsection{Smooth tropical varieties.} Smooth tropical varieties were introduced by Mikhalkin, although it took time for them to appear in writing (see for instance \cite{Shaw} for a thorough discussion). An \emph{integral affine map} $\varphi: \TR^{n}\rightarrow\TR^{m}$ is a map that arises from a well-defined extension of an {integral affine map} $\wt{\varphi}: \RR^{n}\rightarrow\RR^{m}$, which in turn is defined as the composition of an integral linear map and an {arbitrary} translation. 

An \emph{abstract smooth tropical variety} of dimension $n$ is an abstract polyhedral complex $X$ with charts $(U_\alpha, \Phi_\alpha)$; $\Phi_\alpha:U_\alpha\rightarrow V_\alpha \subset \TR^{N_\alpha}$ such that
\begin{compactenum}[(1)]
\item for all $\alpha$, $V_\alpha$ is an open subset of $\B(M)\times \TR^{\Sed(Y_\alpha)}$, where $M$ is a loopless matroid with $\rk(M)-1+\sed(Y_\alpha)=n$, and the map $\Phi_\alpha$ is a homeomorphism.
\item for all $\alpha, \alpha'$, \[\Phi_\alpha\circ\Phi_{\alpha'}^{-1}:\Phi_{\alpha'}(U_\alpha\cap U_\alpha')\longrightarrow \Phi_{\alpha}(U_\alpha\cap U_\alpha')\subset V_\alpha\] can be extended to an integral affine map $\varphi: \TR^{N_{\alpha'}}\rightarrow\TR^{N_{\alpha}}$.
\item the charts are of finite type, i.e.,\ there exists a finite number of open sets $(Q_i)$ such that $\bigcup Q_i=X$, and such that for every $Q_i$ there is an $\alpha$ such that $Q_i\subset U_\alpha$ and $\Phi_\alpha(Q_i) \subset V_\alpha$.
\end{compactenum}
We refer to the above varieties as ``abstract'', and reserve the term \emph{smooth tropical variety} only for those varieties 
that are embedded as polyhedral complexes in $\mathbb{A}^d$ (such that the $U_\alpha=V_\alpha$ are subsets of $\mathbb{A}^d$ and the maps are identities). Of special importance is the \emph{(tropical) hyperplane}, a hypersurface that arises as the smooth tropical variety of a single, tropical (affine) linear function, none of whose coefficients are degenerate. Combinatorially, a tropical hyperplane arises as the Bergman fan of a uniform matroid of rank $r$ on $r+1$ elements.

\begin{rem}[Necessity of the integral structure]\label{rem:real}
This is the standard definition of smooth tropical varieties following Mikhalkin, but for our purposes, the integrality assumption in (2) can be dropped. Indeed, the only case of a ``tropical Lefschetz Theorem'' that depends on the integrality assumption considered here, will turn out to fail regardless of the integral structure (Proposition~\ref{prp:pushing_direction}).
\end{rem}

\subsection{Tropical link and tangent fan.} 
To understand the notion of an embedded smooth tropical variety, it is useful to consider tangent fan. For instance, a smooth tropical variety in $\mathbb{R}^d$ is a polyhedral complex such that at every point the tangent fan is a Bergman fan. For tropical varieties in tropical affine or projective space the answer is not quite so simple.
We need a notion of tangent fan at infinity:

Let $X$ be a polyhedral complex in affine tropical space $\TR^d$, and let $\sigma$ denote a face of $X$ of fine sedentarity $S$. Then $\tT_\sigma X\defeq  \TT_\sigma \big(X\cap (\RR^{[d]{{\setminus}} S} \times \{-\infty\}^S)\big)$ is the \emph{tropical tangent fan} of $\sigma$ in $X$. Let $\tau$ be any facet of $\sigma \in X$ in $\TR^d$.
 Then there is a natural map \[\mr{d}_{\sigma\rightarrow\tau}: \tT_\sigma X \longrightarrow \tT_\tau X.\] If $\Sed(\sigma)=\Sed(\tau)$, then $\mr{d}_{\tau\rightarrow\sigma}$ is given by natural inclusion of tangent fans. If $\Sed(\sigma)\neq \Sed(\tau)$, then $\Sed(\sigma)\subsetneq \Sed(\tau)$ and $\mr{d}_{\tau\rightarrow\sigma}$ is given by restriction of the projection along coordinate directions
\[\RR^{[d]{{\setminus}} \Sed(\sigma)}\longrightarrow \RR^{[d]{{\setminus}} \Sed(\tau)}.\]

With this notion, we can rephrase again: 
\begin{prp}
An embedded smooth tropical variety in $\mathbb{A}^d$ is a polyhedral complex such that the tropical tangent fan at each point is a Bergman fan.
\end{prp}

\begin{proof}
It is clear that the notions coincide at mobile points. For points of positive sedentarity the notion of tropical tangent fan and condition (1) for smooth tropical varieties reduce the proof to checking the claim only at mobile points. This is done by restricting the variety to the appropriate subspace of positive sedentarity.
\end{proof}
%

\section{The positive side of the Bergman fan.}\label{ssec:pos} 

For the purpose of proving the Lefschetz theorems we need to first recast Theorem~\ref{mthm:pos_sum_geom_latt_cm} in the language of Bergman fans. For the tropical Lefschetz theorems, we are interested in the topology of the restriction
$\Lk_{\mbf{0}} \RS{\B}{{H}^+}\simeq \TT^1_{\mbf{0}} (\B\cap H^+)$ of a Bergman fan $\B$ to a closed halfspace ${{H}^+}$, cf.\ Section~\ref{ssec:restref}.
We have the following corollary of Theorem~\ref{mthm:pos_sum_geom_latt_cm}. 
\begin{mlem}\label{mlem:positive-side-bergman}
Let $M$ be a finite matroid, let $\B=\B(M)$ in $\RR^{|M|-1}$ be its Bergman fan, and let $H^+$ be a general position closed halfspace with $\mbf{0}\in\partial H^+$. Then the link $\Lk_{\mbf{0}} \RS{\B}{{H}^+}$ is homotopy Cohen--Macaulay of dimension $r-2$.
\end{mlem}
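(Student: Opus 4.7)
The approach is to recast the link $\Lk_{\mbf{0}} \RS{\B}{H^+}$ as the order complex of a filtered geometric lattice and then invoke Theorem~\ref{mthm:pos_sum_geom_latt_cm}. Let $\omega$ be the inward normal to $H^+$; using the embedding of the ground set as vectors $e_1,\ldots,e_n \in \RR^{n-1}$ with $\sum_i e_i = 0$, I would define weights $\omega_i \defeq \omega(e_i)$. Because $\sum_i e_i = 0$, one automatically has $\omega \cdot [n] = \sum_i \omega_i = 0$; this built-in normalization is precisely what situates the problem at the boundary case $t = \omega \cdot [n]$ of Theorem~\ref{mthm:pos_sum_geom_latt_cm}.

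The rays of $\B$ are spanned by the vectors $e_F$ indexed by proper flats $F$ of $M$, and a cone $\pos(F_1 < \cdots < F_k)$ lies inside the closed halfspace $H^+$ if and only if $\omega \cdot F_i \geq 0$ for every $i$. General position of $H^+$ implies $\omega \cdot F \neq 0$ for every proper flat, and a further arbitrarily small perturbation of $\omega$---one small enough not to alter which strict inequalities $\omega \cdot F > 0$ hold, and therefore not to alter $\RS{\B}{H^+}$---can be chosen so that $\omega \cdot \sigma \neq 0$ for every proper subset $\emptyset \subsetneq \sigma \subsetneq [n]$. This yields full genericity in the sense required by Theorem~\ref{mthm:pos_sum_geom_latt_cm}. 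Under this identification, $\Lk_{\mbf{0}} \RS{\B}{H^+}$ is combinatorially the order complex of the subposet $\LL^{>0}$ of flats with $\omega \cdot F > 0$.

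Applying Theorem~\ref{mthm:pos_sum_geom_latt_cm} with $t = 0$, the hypothesis $t \leq \min\{0, \omega \cdot [n]\} = 0$ is satisfied as an equality, and the theorem delivers the homotopy Cohen--Macaulay property for $\LL^{>0}$ in dimension $r - 2$. The step requiring the most care is the translation itself: verifying that $\Lk_{\mbf{0}} \RS{\B}{H^+}$ records precisely the chains of flats $F$ with $\omega \cdot F > 0$ (with no spurious faces appearing along $\partial H^+$), and that the genericity perturbation is harmless. Once this translation is in place, the main theorem of Part~\textbf{A} does essentially all the work.
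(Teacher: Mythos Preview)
Your proposal is correct and follows essentially the same route as the paper: define the weight $\omega_i = \mbf{n}\cdot e_i$ from the interior normal, observe that $\sum e_i = 0$ forces $\omega\cdot[n]=0$, identify $\Lk_{\mbf{0}}\RS{\B}{H^+}$ with the order complex of $\LL^{>0}$, and invoke Theorem~\ref{mthm:pos_sum_geom_latt_cm} at $t=0$. Your added remark about perturbing $\omega$ to achieve genericity on \emph{all} proper subsets (not just flats) is a point the paper glosses over by simply declaring $\omega$ generic; your treatment is slightly more careful there, but the argument is otherwise identical.
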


\begin{proof}
Let $\mbf{n}$ denote the interior normal vector to $H$. Then \[\omega=(\omega_1,\cdots,\omega_n)=(\mbf{n}\cdot e_1,\cdots,\mbf{n}\cdot e_n)\]
is a generic weight on the elements $[n]$ of $M$ with $\omega\cdot [n]=0$. 
With this we have, for every subset $\sigma$ of $[n]$, that
\[\sigma\in \LL^{>0}\Longleftrightarrow \omega\cdot \sigma>0\Longleftrightarrow\mbf{n}\cdot e_{\sigma}>0 \Longleftrightarrow e_{\sigma}\in {H}^+\]
so that 
\[\Lk_{\mbf{0}} \RS{\B}{{H}^+}\cong \LL^{>0}.\]
The claim hence follows from Theorem~\ref{mthm:pos_sum_geom_latt_cm}.
\end{proof}

Similarly, we also have the following topological characterization of the full Bergman fan using shellability \cite{Bj1},
or Theorem~\ref{mthm:pos_sum_geom_latt_cm} for $t << 0$, to conclude Cohen-Macaulayness for the whole lattice $\LL$.

\begin{mlem}\label{mlem:bergman}
Let $M$ denote a finite matroid and $\B=\B(M)$ in $\RR^{|M|-1}$
 its Bergman fan. Then the link $\Lk_{\mbf{0}} \B$ is homotopy Cohen--Macaulay of dimension $r-2$.
\end{mlem}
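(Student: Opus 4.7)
The plan is to reduce the statement to Theorem~\ref{mthm:pos_sum_geom_latt_cm} by identifying $\Lk_{\mbf{0}} \B(M)$ combinatorially with the order complex of the proper part $\LL$ of the lattice of flats of $M$. Once this identification is in hand, the result follows from Theorem~\ref{mthm:pos_sum_geom_latt_cm} applied with a filtration threshold chosen small enough that $\LL^{>t}=\LL$.

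First I would unpack the definition of the Bergman fan. By construction, every cone of $\B(M)$ is of the form $\pos(\mbf{C})$, where $\mbf{C}=(F_1<F_2<\cdots<F_k)$ is an increasing chain of proper flats in $\LL$, and its rays are indexed by the flats in $\mbf{C}$. Distinct chains give distinct cones by the linear independence of the integer vectors $\{e_F\}$ along a flag. Consequently, the combinatorial structure of $\Lk_{\mbf{0}} \B(M)$ is exactly that of the order complex $\Delta(\LL)$: $(k-1)$-simplices correspond to chains of length $k$ in $\LL$, and facets correspond to maximal flags, which have exactly $r-1$ elements. In particular $\Lk_{\mbf{0}} \B(M)$ is pure of dimension $r-2$, and any two of its realizations (as a spherical polyhedral complex and as an abstract order complex) have the same homotopy type.

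Next I would choose any generic weight $\omega$ on the atoms of $M$ with $\omega\cdot [n]>0$; this is possible by an arbitrarily small perturbation of, say, the all-ones vector. Then pick
\[t\ <\ \min\bigl\{0,\ \min_{\emptyset\subsetneq \sigma\subsetneq [n]} \omega\cdot\sigma\bigr\},\]
which is a finite real number. By construction $t\le\min\{0,\omega\cdot[n]\}$ and $\omega\cdot\sigma>t$ for every proper nonempty $\sigma\subset[n]$, so in particular for every flat $\sigma\in\LL$. Hence $\LL^{>t}=\LL$, and Theorem~\ref{mthm:pos_sum_geom_latt_cm} yields that $\LL$ is homotopy Cohen--Macaulay of dimension $r-2$. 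Transporting this along the combinatorial isomorphism $\Lk_{\mbf{0}} \B(M)\cong \Delta(\LL)$ concludes the proof.

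The only delicate step is the combinatorial identification in the first paragraph: one must verify both that the cones $\pos(\mbf{C})$ are genuinely simplicial (so that the link at $\mbf{0}$ is a simplicial complex on the rays, not just a polyhedral one) and that the face poset matches chains in $\LL$. Both follow from the circuit-independence of the vectors $e_1,\ldots,e_n$ assumed in the definition of the Bergman fan, together with the lattice-partition axiom recorded in Lemma~\ref{lem:balance}. Alternatively, and more directly, one could bypass Theorem~\ref{mthm:pos_sum_geom_latt_cm} entirely and invoke Bj\"orner's classical shellability of geometric lattices \cite{Bj1}, which is exactly the parenthetical route indicated in the statement.
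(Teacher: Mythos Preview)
Your proposal is correct and follows exactly the route the paper indicates: identify $\Lk_{\mbf{0}}\B(M)$ with the order complex $\Delta(\LL)$ and then either specialize Theorem~\ref{mthm:pos_sum_geom_latt_cm} to the case $\LL^{>t}=\LL$ via a sufficiently small $t$, or invoke Bj\"orner's shellability of geometric lattices directly. The only superfluous remark is the appeal to Lemma~\ref{lem:balance} for the face-poset identification; that identification comes straight from the definition of $\B(M)$ as the fan over chains in $\LL$, with simpliciality guaranteed by the linear independence of $\{e_{F_1},\dots,e_{F_k}\}$ for any chain $F_1<\cdots<F_k$.
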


This fact is quite central, since together with Lemma~\ref{lem:balance} it implies that the toric variety over a Bergman fan has
a Gorenstein Chow ring, even though the fan is non-complete, which in turn is a central ingredient in~\cite{AHK}.

\section{Lefschetz Section Theorems for cell decompositions of tropical varieties.} \label{sec:cell}

We are now ready to prove several Lefschetz theorems for tropical varieties. In each section we first recall 
the classical Lefschetz theorems, and then proceed to prove analogues for tropical varieties. It should be noted, however,  that the tropical versions are often so much stronger than the classical "analogues" that we do not even know how to formulate the proper analogues classically. They should therefore be seen as inspirations rather than true analogues.

A crucial ingredient of the Lefschetz Section Theorem of Andreotti--Frankel \cite{AndreottiFrankel} is a vanishing theorem for CW models and Betti numbers of affine varieties. 

\begin{thm}[Andreotti--Frankel, \cite{AndreottiFrankel}]\label{thm:Andreotti}
Let $X$ denote a smooth $n$-dimensional variety in~$\mbb{C}^d$. Then $X$ is homotopy equivalent to a CW complex of dimension $\le n$.
In particular, the integral homology groups of $X$ vanish above dimension $n$.
\end{thm}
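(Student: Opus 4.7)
The plan is to apply real Morse theory to a suitably chosen exhaustion function on $X$, following the approach of Andreotti--Frankel. Since $X$ is a closed complex submanifold of $\mathbb{C}^d$, I would consider the squared distance function $f_p \colon X \to \mathbb{R}$, $f_p(z) \defeq \|z - p\|^2$, from a generic basepoint $p \in \mathbb{C}^d$. Because $X$ is closed in $\mathbb{C}^d$, this function is automatically proper and bounded below. A standard transversality argument (Sard's theorem applied to the parametrized family $\{f_p\}_{p\in \mathbb{C}^d}$) shows that for generic $p$, $f_p$ is a Morse function on $X$ in the classical smooth sense, with isolated nondegenerate critical points.

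The central step, and the real obstacle, is the bound on the Morse indices at the critical points. I would argue as follows. The function $f_p$ is strictly plurisubharmonic on $\mathbb{C}^d$, and hence its restriction to the complex submanifold $X$ is strictly plurisubharmonic as well. Fix a critical point $q$ and let $H$ denote the real Hessian of $f_p|_X$ on $T_q X \cong \mathbb{R}^{2n}$, and let $J$ be the complex structure on $T_q X$. Strict plurisubharmonicity translates to the positivity of the Levi form, i.e.\ $H(v,v) + H(Jv, Jv) > 0$ for every nonzero $v \in T_q X$. If $V^- \subseteq T_q X$ is the negative definite subspace of $H$, of real dimension equal to the Morse index $k$, then the Levi form inequality forces $H$ to be strictly positive on the subspace $JV^-$. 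Consequently $V^- \cap JV^- = 0$, so $V^- + JV^- \subseteq T_q X$ has real dimension $2k$, which forces $k \le n$.

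Once the index bound is in hand, the conclusion follows from classical Morse theory: since $f_p$ is proper, bounded below and Morse on $X$, the sublevel sets $X_{\le t}$ provide a filtration and the passage through each critical value attaches a cell of dimension equal to the Morse index at that critical point. As every index is at most $n$, iterating this procedure produces a CW complex of dimension $\le n$ homotopy equivalent to $X$. The homological statement $H_i(X;\mathbb{Z}) = 0$ for $i > n$ is then immediate from cellular homology.

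The main subtlety in the argument is the Morse index estimate; all other steps are fairly routine applications of properness, genericity, and classical Morse theory. I would expect this to be the only place where the complex analytic structure of $X$, rather than just its smooth structure and dimension, plays an essential role, and it is precisely this step that has no direct analogue over $\mathbb{R}$ (where affine varieties need not have half-dimensional CW models) and that the present paper is replacing, in the tropical setting, by the Cohen--Macaulayness estimates of Theorem~\ref{mthm:pos_sum_geom_latt_cm} via Lemma~\ref{mlem:positive-side-bergman}.
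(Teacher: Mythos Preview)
Your proposal is correct and follows precisely the approach the paper sketches: the paper does not give a full proof of this classical result but merely indicates that one takes the distance from a generic point as a Morse function and applies an index estimate, referring to Milnor for details. Your write-up fills in exactly those details (the plurisubharmonicity/Levi-form argument for the index bound) and even correctly identifies this index estimate as the step whose tropical analogue is supplied by Theorem~\ref{mthm:pos_sum_geom_latt_cm} and Lemma~\ref{mlem:positive-side-bergman}.
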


The idea for the proof is to use classical Morse theory; the Morse function is given by the distance $f$ from a generic point in $\mbb{C}^d$. The theorem then follows from the main lemma of Morse theory, together with a simple index estimate for the critical points of $f$. See Milnor's book \cite{Milnor} for an excellent exposition. 

From this affine theorem, one can deduce the classical Lefschetz Section Theorem. For smooth algebraic varieties and homology groups this theorem was proven first by Lefschetz \cite{Lefschetz}, and later by Andreotti--Frankel~\cite{AndreottiFrankel}.

\begin{thm}[Lefschetz, Andreotti--Frankel]
\label{thm:clef}
Let $X$ denote a smooth algebraic $n$-dimensional variety in $\mbb{CP}^d$, and let $H$ denote a hyperplane in $\mbb{CP}^d$. Then the inclusion of $X\cap H$ into $	X$ induces an isomorphism of integral homology groups up to dimension $n-2$, and a surjection in dimension $n-1$.
\end{thm}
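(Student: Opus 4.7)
The plan is to deduce the projective statement from the affine Andreotti--Frankel theorem (Theorem~\ref{thm:Andreotti}) just recalled, by passing to the complement and invoking Lefschetz duality. Since $H$ is generic, $X \cap H$ is a transverse intersection, so the open complement $U := X \setminus (X \cap H)$ is a smooth affine algebraic variety of complex dimension $n$, realized inside the affine chart $\mathbb{C}^d = \mathbb{CP}^d \setminus H$. Hence by Theorem~\ref{thm:Andreotti}, $U$ has the homotopy type of a CW complex of real dimension at most $n$, so
\begin{equation*}
H^{j}(U; \mathbb{Z}) \ =\ 0 \qquad \text{for all } j > n.
\end{equation*}

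Next, I would translate this vanishing statement into a vanishing of the relative homology of the pair $(X, X \cap H)$ via Poincar\'e--Lefschetz duality. The variety $X$ is a closed oriented manifold of real dimension $2n$; since $X \cap H$ is a smooth closed submanifold (by genericity of $H$), Lefschetz duality gives a natural isomorphism
\begin{equation*}
H_k(X, X \cap H; \mathbb{Z})\ \cong\ H^{2n-k}(U; \mathbb{Z})
\end{equation*}
for every $k$. The cohomological vanishing above then forces $H_k(X, X \cap H; \mathbb{Z}) = 0$ whenever $2n - k > n$, that is, for all $k \le n - 1$.

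Finally, I would plug this into the long exact sequence of the pair,
\begin{equation*}
\cdots \ \to\  H_{k+1}(X, X\cap H)\ \to\ H_k(X \cap H)\ \to\ H_k(X)\ \to\ H_k(X, X \cap H)\ \to\ \cdots ,
\end{equation*}
and read off the conclusion: the vanishing of the bracketing relative groups for $k \le n-2$ yields an isomorphism $H_k(X \cap H; \mathbb{Z}) \xrightarrow{\sim} H_k(X; \mathbb{Z})$, and the vanishing of $H_{n-1}(X, X \cap H)$ alone gives surjectivity in degree $n-1$.

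The only delicate point is setting up Lefschetz duality cleanly: one needs the identification $H_k(X, X \cap H) \cong H_k(X, N)$ for a closed regular neighborhood $N$ of $X \cap H$, together with the duality $H_k(X, N) \cong H^{2n-k}(X \setminus \mathrm{int}(N))$, and a homotopy equivalence $X \setminus \mathrm{int}(N) \simeq U$. Genericity of $H$ is precisely what permits the tubular neighborhood theorem and hence this reduction; beyond this, the argument is purely formal, the essential input being the affine dimension bound of Theorem~\ref{thm:Andreotti}.
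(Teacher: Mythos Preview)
Your argument is correct and is precisely the approach the paper indicates: immediately after the statement it remarks that the theorem ``follows directly from the fact that $X\setminus H$ is an affine variety, Theorem~\ref{thm:Andreotti}, and Lefschetz duality.'' You have simply unpacked that sentence, including the tubular-neighborhood justification of the duality isomorphism $H_k(X,X\cap H)\cong H^{2n-k}(U)$, so there is nothing to add.
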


This follows directly from the fact that $X{{\setminus}} H$ is an affine variety, Theorem~\ref{thm:Andreotti}, and Lefschetz duality. Bott, Thom and Milnor \cite{Bott, Milnor} then observed that this theorem extends to homotopy groups, and more generally to cell decompositions of the variety.

\begin{thm}[Bott, Milnor, Thom]\label{thm:clef2}
Let $X$ denote a smooth algebraic $n$-dimensional variety in $\mbb{CP}^d$, and let $H$ denote a hyperplane in $\mbb{CP}^d$. Then
$X$ is, up to homotopy equivalence, obtained from $X\cap H$ by successively attaching cells of dimension~$\ge n$. 

In particular, the inclusion $X\cap H \hookrightarrow X$ induces an isomorphism of homotopy and integral homology groups up to dimension $n-2$, and a surjection in dimension $n-1$.
\end{thm}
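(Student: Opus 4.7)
The plan is to derive Theorem~\ref{thm:clef2} from the affine Theorem~\ref{thm:Andreotti} through Morse-theoretic duality, in the manner originally sketched by Bott, Milnor and Thom. Since $\mbb{CP}^d \setminus H \cong \mbb{C}^d$, the complement $X \setminus H$ is a smooth affine algebraic variety of complex dimension $n$, so Theorem~\ref{thm:Andreotti} applies. More precisely, the underlying proof produces a proper Morse function $f\colon X \setminus H \to \mbb{R}$ (for instance the squared distance to a generic point, after identifying $\mbb{CP}^d \setminus H$ with $\mbb{C}^d$) whose critical points all have Morse index at most $n$.

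I would then convert this affine Morse picture into a relative handle decomposition of the closed manifold $X$ with respect to $X \cap H$. Let $U$ be a small closed tubular neighborhood of $X \cap H$ in $X$, so that $U$ deformation retracts onto $X \cap H$ and $V := \cl{X \setminus U}$ is a compact smooth $2n$-manifold with boundary $\partial V = \partial U$. Choosing $U$ small enough, and modifying $f$ near $\partial V$ if necessary, one arranges that $\nabla f$ is transverse to $\partial V$ and points out of $V$ (this is essentially automatic since $f$ blows up along $H$). The restriction of $f$ to $V$ is then a Morse function on the cobordism $(V;\, \emptyset,\, \partial V)$ whose critical points still have index at most $n$, so $V$ acquires a handle decomposition starting from the empty end using only handles of index $\le n$. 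Flipping the cobordism upside down dualizes each handle of index $k \le n$ into one of index $2n - k \ge n$ attached at the $\partial V$ end. Since $X = U \cup V$ is obtained by gluing $V$ to $U$ along $\partial V = \partial U$, this exhibits $X$, up to homotopy, as built from $U \simeq X \cap H$ by successively attaching cells of dimension $\ge n$.

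The homotopy and integral homology statements follow at once from cellular approximation: attaching cells of dimension $\ge n$ does not change $\pi_i$ or $H_i$ for $i \le n-2$ and can only yield a surjection in dimension $n-1$. The main obstacle is the Morse-theoretic core: verifying the index bound $\le n$ on critical points of $f$. This rests on the complex structure of $X \setminus H$---specifically, that the Hessian of a squared distance function, restricted to a complex $n$-submanifold of $\mbb{C}^d$, has at most $n$ positive eigenvalues at any critical point, because the almost complex structure pairs positive eigendirections with non-positive ones. Setting this up so that the modification of $f$ near $\partial V$ introduces no extra critical points is the technical heart of the classical argument (see Milnor~\cite{Milnor}).
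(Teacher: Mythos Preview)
The paper does not actually prove Theorem~\ref{thm:clef2}; it is stated as classical background with references to \cite{Bott, Milnor}, and the only hint at an argument is the sentence preceding it (that Theorem~\ref{thm:clef} follows from Theorem~\ref{thm:Andreotti} and Lefschetz duality, and that Bott, Thom and Milnor observed the extension to homotopy groups and cell decompositions). Your sketch is precisely the standard Morse-theoretic argument behind that reference: pass to the affine complement $X\setminus H$, use the Andreotti--Frankel index bound to get a handle decomposition of the compact piece $V$ with handles of index $\le n$, and dualize to obtain cells of dimension $\ge n$ attached to a tubular neighborhood of $X\cap H$. This is correct and is exactly what Milnor's exposition~\cite{Milnor} does, so there is nothing to compare --- your proposal \emph{is} the argument the paper is citing.

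One small comment: the paper's one-line justification of Theorem~\ref{thm:clef} invokes Lefschetz (i.e.\ Poincar\'e--Lefschetz) duality, which only yields the homology statement. Your handle-duality argument is the genuine upgrade needed for Theorem~\ref{thm:clef2}, since homological duality alone cannot produce the CW/homotopy conclusion. So your write-up is in fact more informative on this point than the paper's sketch.
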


\subsection*{The tropical case.} Similarly to the Vanishing Theorem for classical projective varieties, the Lefschetz type theorem for affine tropical varieties proved in this section is crucial for deriving Lefschetz theorems for projective varieties. In the tropical realm, the Andreotti--Frankel Vanishing Theorem for affine varieties takes a form similar to the Lefschetz Theorem for projective varieties. The theorem can be stated as follows:

\begin{mthm}\label{mthm:lef_t_trop_aff}
Let $X\subset \mathbb{A}^d$ be a smooth $n$-dimensional tropical variety, and let $H$ denote a chamber complex. Then $X$ is, up to homotopy equivalence, obtained from $X\cap H$ by successively attaching $n$-dimensional cells.
\end{mthm}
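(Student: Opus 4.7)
The plan is to reduce Theorem~\ref{mthm:lef_t_trop_aff} to a local ``convex cell'' statement, namely Lemma~\ref{mlem:lef_to_convex_cell}: for any pointed convex polyhedron $P \subset \TR^d$ in sufficiently general position with respect to $X$, the space $X \cap \overline{P}$ is, up to homotopy equivalence, obtained from $X \cap \partial P$ by attaching $n$-dimensional cells. Since by the standing hypothesis $\TR^d \setminus H$ decomposes as a disjoint union of open convex pointed regions $P_1, \ldots, P_m$, the full theorem then follows by iteratively attaching the closures $\overline{P_j}$ to $X \cap H$ one at a time, with each such attachment contributing only cells of dimension $n$.

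To establish the convex cell lemma, I would proceed by stratified Morse theory. Choose a generic linear function $\ell$ on $\RR^d$ that is bounded above on $\overline{P}$ --- such an $\ell$ exists precisely because $P$ is pointed --- and set $f = -\ell$. After a small generic perturbation, $f|_X$ satisfies the conditions of a stratified Morse function from Section~\ref{ssc:strat}, and its superlevel sets are closed halfspaces intersected with $\overline{P}$, hence convex. Thus Lemma~\ref{lem:smt_observation_on_level sets} applies, and Theorem~\ref{thm:smt_main_theorem} concentrates the entire homotopy change between $X \cap \partial P$ and $X \cap \overline{P}$ at the finitely many critical points of $f|_X$.

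At a critical point $x$ in a stratum $\sigma^\circ$, Lemma~\ref{lem:smt_observation_on_level sets} identifies the local Morse data with the product $(\sigma, \partial \sigma) \times (\CO(L), L)$, where $L = \RN^1_\sigma(X \cap f^{-1}(-\infty, t])$. The chart structure of the smooth tropical variety $X$, combined with Lemma~\ref{lem:intersections} and the fact that products of Bergman fans are Bergman fans, identifies $\RN_\sigma X$ (up to integral affine equivalence) with the underlying space of a Bergman fan $\B(N)$ of some matroid $N$ of rank $n - \dim \sigma + 1$. Since the linearization of $\{f \le t\}$ at $x$ is a generic closed halfspace $H^-$ through the origin of $\RN_\sigma X$, the main combinatorial input Lemma~\ref{mlem:positive-side-bergman} shows $L = \Lk_x \RS{\B(N)}{H^-}$ to be homotopy Cohen--Macaulay of dimension $n - \dim \sigma - 1$, and in particular $(n - \dim \sigma - 2)$-connected. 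Consequently $\CO(L)$ attached along $L$ is equivalent to attaching cells of dimension $n - \dim \sigma$, and taking the product with $(\sigma, \partial \sigma)$ yields cells of total dimension exactly $\dim \sigma + (n - \dim \sigma) = n$, as required.

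The main obstacle I anticipate is not the dimension estimate --- which is delivered cleanly by Lemma~\ref{mlem:positive-side-bergman} --- but rather the careful construction of the Morse function on the stratified tropical space $X$. In particular, one must arrange that the gradient of $f$ is uniformly transverse to $\partial P$ along every stratum touching $\partial P$ (so that the open polyhedral structure from Section~\ref{ssec:restref} is respected), that the critical points are isolated and nondegenerate in each stratum, and that the local identification of $\RN_\sigma X$ with a Bergman fan is compatible with the halfspace coming from $f$ on the sedentary strata of $X$. The regularity condition~\eqref{eq:reg} and the genericity of $\ell$ should cooperate to make the sedentary behaviour parallel to the mobile case, but this verification is the technically most delicate step of the argument.
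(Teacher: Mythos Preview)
Your overall architecture is exactly that of the paper: reduce to the convex-cell Lemma~\ref{mlem:lef_to_convex_cell} via the decomposition of $\TR^d\setminus H$ into pointed polyhedra, then analyze each piece by stratified Morse theory, with the index estimate supplied by Lemma~\ref{mlem:positive-side-bergman}. The Morse-data computation you give at a critical stratum $\sigma^\circ$ is also the paper's.

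The genuine gap is your choice of Morse function. A generic linear $f=-\ell$ does \emph{not} have $\partial P$ (let alone $\partial_{|\mo} P$) as a sublevel set: its sublevel filtration starts at a single vertex of $P$ and sweeps across the polyhedron, so the stratified Morse theory you invoke compares $X\cap\overline{P}$ with a point (or the empty set), not with $X\cap\partial P$. Phrased in terms of the open-polyhedral Morse condition of Section~\ref{ssc:strat}, you need $\langle \nu,\nabla\wt f\rangle<0$ for the inward normal $\nu$ at \emph{every} facet of $\partial P$; a constant gradient cannot satisfy this once $P$ has more than one mobile facet. ``Transverse to $\partial P$'' is not enough---you need ``uniformly outward along $\partial_{|\mo} P$''.

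The paper fixes this by taking $\wt f:P\to\RR^{\ge 0}$ with $\wt f^{-1}\{0\}=\partial_{|\mo}P$ and strictly convex superlevel sets $\wt f^{-1}[t,\infty)$ for every $t>0$ (a smoothed product of the defining affine functionals of $P$ works, and here pointedness is what guarantees such a function exists). With this choice the sublevel filtration genuinely interpolates between $X\cap\partial_{|\mo}P$ and $X\cap P$, and your Morse-data analysis then goes through verbatim. Note also that the relevant boundary is $\partial_{|\mo}P$, not $\partial P$: the sedentary part of $\partial P$ lies at infinity in $\TR^d$, not on $H$, and must remain part of $X$ rather than be pushed into $X\cap H$. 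This is why the paper works throughout with the bounded support $X[\delta]$ of Section~\ref{ssec:bsp} and with $\partial_{|\mo}P$.
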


By elementary cellular homology and homotopy theory \cite{Hatcher, Whitehead}, we immediately obtain a Lefschetz Section Theorem for homotopy and homology groups:

\begin{mcor}\label{mcor:leftbt}
Let $X\subset \mathbb{A}^d$ be a smooth $n$-dimensional tropical variety, and let $H$ denote a chamber complex. Then the inclusion $X\cap H \hookrightarrow X$ induces an isomorphism of homotopy groups resp.\ integral homology groups up to dimension $n-2$, and a surjection in dimension $n-1$.
\end{mcor}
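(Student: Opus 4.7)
The plan is to deduce the corollary as a purely formal consequence of Theorem~\ref{mthm:lef_t_trop_aff}, which already contains all the geometric content: up to homotopy equivalence, the pair $(X, X\cap H)$ is modeled by a relative CW-pair all of whose relative cells have dimension exactly $n$. Everything then follows by reading off the long exact sequences of the pair in both homology and homotopy.

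For homology, the relative cellular chain complex of $(X, X\cap H)$ is concentrated in a single degree $n$, so $H_i(X, X\cap H; \mathbb{Z}) = 0$ for every $i \neq n$. Plugging this into the long exact sequence
\[
\cdots \longrightarrow H_{i+1}(X, X\cap H) \longrightarrow H_i(X\cap H) \longrightarrow H_i(X) \longrightarrow H_i(X, X\cap H) \longrightarrow \cdots
\]
immediately yields $H_i(X\cap H;\mathbb{Z}) \xrightarrow{\cong} H_i(X;\mathbb{Z})$ for all $i\le n-2$, and a surjection in degree $n-1$.

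For homotopy I would invoke the standard relative CW fact that a pair whose relative cells all have dimension $\ge n$ is $(n-1)$-connected. This is a one-line cellular approximation argument: any map $(D^i,S^{i-1}) \to (X, X\cap H)$ with $i\le n-1$ is homotopic rel $S^{i-1}$ to a cellular map, and cellular maps of dimension $<n$ take values in $X\cap H$. Hence $\pi_i(X, X\cap H,x_0)=0$ for every $i\le n-1$ and every basepoint $x_0\in X\cap H$, and the long exact sequence of homotopy groups of the pair yields isomorphisms $\pi_i(X\cap H)\xrightarrow{\cong}\pi_i(X)$ for $i\le n-2$ and a surjection for $i=n-1$.

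There is essentially no obstacle: all the real work is done in Theorem~\ref{mthm:lef_t_trop_aff}. The only mild bookkeeping is to choose a basepoint inside $X\cap H$ (which is possible whenever $X$ is nonempty, since the cell-attachment description of Theorem~\ref{mthm:lef_t_trop_aff} implicitly requires $X\cap H\neq\varnothing$) and, if $X\cap H$ has several components, to apply the homotopy statement componentwise.
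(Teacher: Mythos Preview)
Your argument is correct and matches the paper's approach exactly: the paper derives the corollary in one line from Theorem~\ref{mthm:lef_t_trop_aff} by invoking ``elementary cellular homology and homotopy theory,'' and what you have written is precisely the standard unpacking of that phrase via the relative cellular chain complex and cellular approximation, followed by the long exact sequences of the pair.
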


\begin{rem}
Throughout, every tropical variety is considered to be endowed with a triangulation. This is of no further use than merely 
to allow us to analyse varieties using methods from combinatorial and PL topology. In particular, the triangulation at hand does not
need to be specified in our setting.
\end{rem}

\begin{mlem}\label{mlem:lef_to_convex_cell}
Let $X$ denote a smooth tropical $n$-dimensional variety in $\mathbb{A}^d$, and let $P$ denote any closed pointed convex $d$-polyhedron or $d$-polytope in $\mathbb{A}^d$. Then $X\cap P$ is obtained from $X\cap \partial_{|\mo} P$ by successively attaching cells of dimension $n$.
\end{mlem}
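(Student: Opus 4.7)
The plan is to apply stratified Morse theory (Theorem~\ref{thm:smt_main_theorem}) to $X \cap P$ using a smooth function $\wt{f}$ defined on a neighborhood of $P$ whose superlevel sets are closed and convex, so that Lemma~\ref{lem:smt_observation_on_level sets} applies at every critical point. A canonical choice is $\wt{f}(x) = -\|x - p\|^2$ for $p$ a generic point in the mobile interior of $P$. After reducing to the bounded truncation of Section~\ref{ssec:bsp} via Proposition~\ref{prp:restriction_gives_def_retract}, and possibly modifying $\wt{f}$ slightly near the mobile boundary, the restriction $f = \wt{f}|_{X \cap P}$ will be a Morse function on the open polyhedral complex $X \cap P \setminus X \cap \partial_{|\mo} P$ in the sense of Section~\ref{sec:polyhedra}, with gradient correctly oriented at $\partial_{|\mo} P$ and no critical points on the mobile boundary, so that the induced sweep starts precisely at $X \cap \partial_{|\mo} P$ and terminates at $X \cap P$.

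At each critical point $x$ in the relative interior of a stratum $\sigma^\circ$ of dimension $k$, Lemma~\ref{lem:smt_observation_on_level sets} yields the Morse data as the product of the tangential pair $(\sigma, \partial \sigma)$ (a $k$-cell) and the normal pair $(\CO K, K)$, where $K = \RN^1_\sigma(X \cap P \cap \wt{f}^{-1}(-\infty, t])$. Since the sublevel set is the closed complement of an open convex ball around $p$, its trace in the normal direction $\RN_\sigma X$ is locally the closed halfspace $H^+$ through the origin with inward normal $p - x$; and because $x$ lies in the mobile interior of $P$, no further constraint from $\TT_x P$ arises, so $K = \RN^1_\sigma X \cap H^+$. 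By the chart axiom for smooth tropical varieties together with Lemma~\ref{lem:intersections}, $\RN^1_\sigma X$ is (up to a linear sedentarity factor, which contributes only a contractible join factor) the link of $\mbf{0}$ in a Bergman fan $\B(M)$ for a suitable matroid, of total dimension $n - k - 1$.

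Now Lemma~\ref{mlem:positive-side-bergman} applies directly, with $H^+$ generic by genericity of $p$: the space $K$ is homotopy Cohen--Macaulay of dimension $n-k-1$, hence $(n-k-2)$-connected and homotopy equivalent to a wedge of $(n-k-1)$-spheres. Consequently, the Morse data pair $(\sigma \times \CO K,\; \sigma \times K \cup \partial \sigma \times \CO K)$ has relative CW cells of dimension precisely $k + (n-k-1) + 1 = n$, its quotient being $S^k \wedge \Sigma K \simeq \bigvee S^n$. Summing over all critical points of $f$, $X \cap P$ is obtained from $X \cap \partial_{|\mo} P$ by attaching $n$-cells only, as claimed. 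The main obstacle will be the first step, namely arranging $\wt{f}$ to be a Morse function on the open polyhedral complex with the gradient correctly oriented at $\partial_{|\mo} P$ (in the sense of Section~\ref{sec:polyhedra}) and no critical points on the mobile boundary, in particular handling the sedentarity boundary; by contrast, the identification of the normal Morse data with a halfspace restriction of a Bergman-fan link, and the resulting connectivity estimate, reduce directly to the already-established Lemma~\ref{mlem:positive-side-bergman} and require no further combinatorial input.
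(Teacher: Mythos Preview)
Your proposal follows the paper's approach almost exactly: stratified Morse theory on $X\cap P$ with a function having convex superlevel sets, then identifying the normal Morse data at each critical point with the halfspace restriction of a Bergman-fan link and invoking Lemma~\ref{mlem:positive-side-bergman}. The computation of the Morse data and the cell-dimension count are correct.

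The one point where you diverge from the paper is the choice of Morse function, and this is precisely the obstacle you flag. Your candidate $\wt f(x)=-\|x-p\|^2$ has gradient pointing \emph{towards} $p$, hence inward at the sedentarity boundary after truncation; this violates the open-polyhedral-complex condition of Section~\ref{sec:polyhedra} there (and, incidentally, your description of the open complex $X\cap P\setminus X\cap\partial_{|\mo}P$ with gradient ``correctly oriented at $\partial_{|\mo}P$'' has the roles reversed: the outward-gradient boundary $B$ is where the sweep \emph{ends}, not where it starts). The paper sidesteps both issues by choosing $\wt f:P\to\RR^{\ge 0}$ with $\wt f^{-1}\{0\}=\partial_{|\mo}P$, strictly convex superlevel sets, and gradient uniformly outward at infinity (i.e.\ pointing toward the sedentarity boundary). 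With that choice the sweep genuinely begins at $X\cap\partial_{|\mo}P$ at level $0$, and the sedentary strata are collared away by the open-complex condition rather than producing spurious critical points. Once this function is in hand, your analysis of the Morse data goes through verbatim and agrees with the paper's.
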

Here $\partial_{|\mo} P$ is the \emph{mobile part} of $\partial P$, i.e.,
\[\partial_{|\mo} P\defeq  (\partial P)_{|\mo} =\{x\in \partial P: \sed(x)=0 \}\]

The following simple lemma explains why we need to work with pointed polyhedra.

\begin{lem}\label{lem:functions}
Let $P$ be a convex pointed polyhedron in $\mathbb{R}^d$. Then there exists a function
$\wt{f}:{P} \longrightarrow \RR^{\ge 0}$ that 
\begin{compactenum}[(1)]
	\item is smooth in the interior of $P$, 
	\item has strictly convex superlevel sets $\wt{f}^{-1}[t,\infty)$ for every $t>0$, 
	\item is strictly monotone increasing to $\infty$ on every infinite ray in the interior of $P$, and	
	\item such that $\wt{f}^{-1}\{0\}=\partial P$.
\end{compactenum}
If $X$ is any polyhedral complex in $P$, then $\wt{f}$ can be chosen to restrict to a stratified Morse function on~$X$.
\end{lem}

Notice that the fact that $P$ is pointed is necessary for condition (3). Indeed, if $P$ contained a line, then in particular it contains a ray and the ray in the opposite direction, which contradicts the monotone increasing property on at least one of these rays.

\begin{proof}
Functions satisfying (1)-(4) can be constructed rather easily, for instance as a product of the affine linear functions defining $P$. Let us start with
\[\widehat{f}(x)\ \defeq\ \prod_{H} \mr{d}(H,x)\]
where $H$ varies over the defining hyperplanes of $P$ and $\mr{d}$ denotes the euclidean distance. This function clearly satisfies (1) and (4). Moreover, if $v$ is any nonzero vector in the recession cone of $P$, then by pointedness of $P$ we have
$\widehat{f}(x+v) > \widehat{f}(x)$ because the distance to at least one of the boundary hyperplanes increases strictly, proving (3). To see (2), note that if $P$ is defined by $k$ hyperplanes, then $\widehat{f}(x)^{\nicefrac{1}{k}}$ is concave on $P$.

Consider now a polyhedral complex $X$ in $P$. To obtain a stratified Morse function on $X$, we need to perturb the function slightly. Condition (1) guarantees smoothness, and condition (3) guarantees that the gradient flow is uniformly outwardly oriented at $\infty$. Finally condition (2) guarantees that on every stratum $\sigma^\circ$, the function $\wt{f}_{|\sigma^\circ}:\sigma^\circ \rightarrow \RR^{\ge 0}$ has at most one critical value (namely, a minimum), cf.\ Lemma~\ref{lem:smt_observation_on_level sets}. 
To guarantee that these points are non-degenerate and have distinct critical values, perturb to
\[\wt{f}(x)\ \defeq\ \prod_{H} \mr{d}^{\alpha_H}(H,x)\]
for generic choices of positive reals $\alpha_H$.
\end{proof}	

\begin{proof}[\textbf{Proof of Lemma~\ref{mlem:lef_to_convex_cell}}]
Note that in the commutative diagram 
\[\begin{tikzcd}
X_{|\mo} \cap P  \arrow[hookrightarrow]{r} & X \cap P\\
X_{|\mo} \cap \partial P \arrow[hookrightarrow]{r} \arrow[hookrightarrow]{u} &  X \cap \partial P \arrow[hookrightarrow]{u}
\end{tikzcd}
\]
the horizontal arrows induce isomorphisms in homotopy. We may therefore restrict to the mobile parts of $X$ and $P$ for the remainder of the proof, and will suppress this from notation for simplicity by omitting the subscript $_{|\mo}$.

Let $\wt{f}:{P} \longrightarrow \RR^{\ge 0}$ be chosen as in the previous lemma, so that we obtain a stratified Morse function for $X$. As already observed, all critical points are minima under this Morse function.


Let $X_{\le s}\defeq  X\cap f^{-1}(-\infty,s]$ and assume that $t$ is a critical value of $f$, with critical point $x$ in $\RR^d$. Let $\varepsilon>0$ be chosen small enough, so that $(t-\varepsilon,t]$ contains only one critical value of $f$. 
 
\begin{figure}[htb] 
\centering 
 \includegraphics[width=0.83\linewidth]{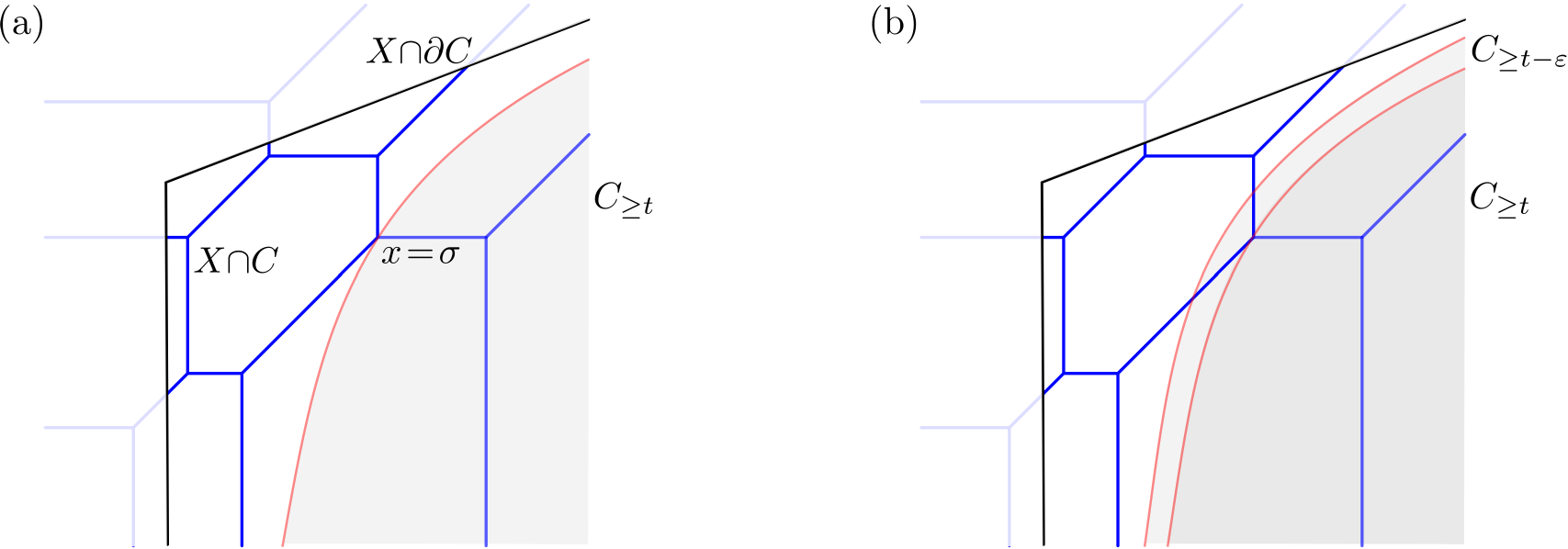} 
\caption{\small Using stratified Morse theory on $X\cap P$, it suffices to consider the Morse data at critical points.} 
 \label{fig:sublevel}
\end{figure}

Let $\sigma$ denote the minimal face of $X$ containing $x$. The set $P_{\ge t}\defeq  \wt{f}^{-1}[t,\infty)$ is a convex set with smooth boundary in $P$. By Lemma~\ref{lem:smt_observation_on_level sets}(1), the tangential Morse data at $x$ is therefore given by $(\sigma,\partial \sigma)$. If we now consider the halfspace $\TT_x P_{\le t}$ then we see that the normal Morse data at $x$ is given by $(\CO \RN^1_\sigma X_{\le t}, \RN^1_\sigma X_{\le t})$ where $\RN^1_\sigma X_{\le t}=\RN^1_\sigma X \cap \RN^1_\sigma f^{-1}(-\infty,t]$ is homotopy equivalent to a wedge of spheres of dimension $(n-\dim\sigma-1)$ by Lemma~\ref{mlem:positive-side-bergman}.

\begin{figure}[htb] 
\centering 
 \includegraphics[width=0.67\linewidth]{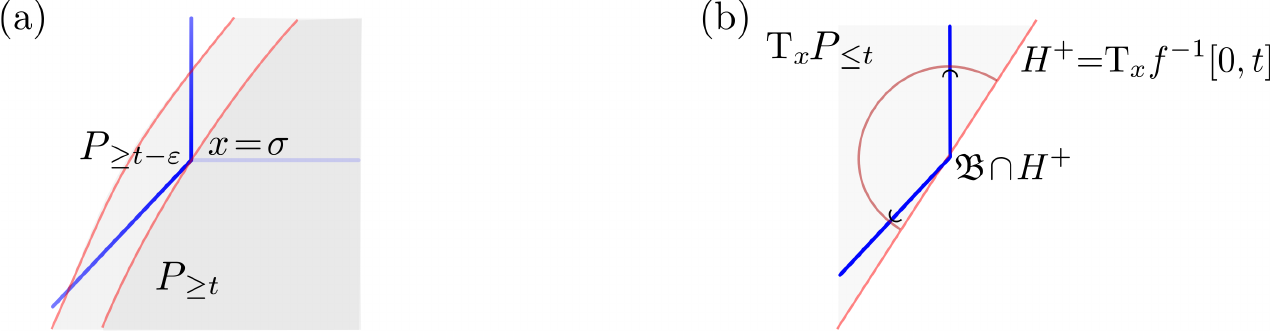} 
 \caption{\small The normal Morse data at a critical point $x\subset \sigma^\circ$ is given by restricting $\Lk_\sigma X\simeq \RN_\sigma^1$ to the hemisphere $\TT^1_x P_{\le t}$.} 
 \label{fig:Morse_data}
\end{figure}

Therefore, the Morse data at $x$ is given as 
\begin{equation*}\label{eq:Morsedata}
(\CO \RN^1_\sigma X_{\le t}, \RN^1_\sigma X_{\le t})\times (\sigma, \partial \sigma) \
\simeq \ \big(\CO ( \RN^1_\sigma X_{\le t}\ast \partial \sigma), \RN^1_\sigma X_{\le t}\ast \partial \sigma \big), 
\end{equation*}
where $\RN^1_\sigma X_{\le t}\ast \partial \sigma$ is homotopy equivalent to a wedge of $(n-1)$-spheres, by Lemma~\ref{lem:join}. The claim now follows from Theorem~\ref{thm:smt_main_theorem}(2), since $\partial P$ is the level set at $t=0$. This finishes the proof of Lemma~\ref{mlem:lef_to_convex_cell}.
\end{proof}

\begin{proof}[\textbf{Proof of Theorem ~\ref{mthm:lef_t_trop_aff}}]
Contrary to the classical case, all versions independent of the ambient space follow from a common lemma and do not use Lefschetz duality: By assumption, $H$ induces a partition of $\mathbb{A}^{d}$ into closed affine pointed polyhedra and polytopes $P_i$. Now, for every~$i$, we have by Lemma~\ref{mlem:lef_to_convex_cell} that $P_i\cap X$ is obtained from $X \cap \partial_{|\mo} P$ by attaching cells of dimension~$n$, as desired.
\end{proof}
	
\subsection{Stable intersection}\label{sec:stable} To prove the case of stable intersection, consider a generic perturbation $P_t \defeq P+vt$ of $P$, choosing $t$ small enough so that it encounters no critical points as $t>0$ tends to $0$, that is, so that $(\partial P_t) \cap X$ does not change combinatorial type. Consider  $P_t' \defeq P \cap P_t$. By Lemma~\ref{mlem:lef_to_convex_cell}, every $S^k$ in $P'_t \cap X$ can be homotoped to $\partial P_t \cap X$ for $k<n= \dim X$. In particular, we have a surjection of relative homotopy groups 
\[\pi_j (P_t  \cap X,(\partial P_t)  \cap X ) \ \twoheadrightarrow\ \pi_j (P_t'  \cap X,(\partial P_t')  \cap X )\]
for $j\le n$, that is an isomorphism for $j<n$. Hence, it remains to approximate $P$ by $P'_t$ from the interior. But this approximation leaves the relative homotopy groups invariant. Hence, we conclude that the relative homotopy groups of the pair $(X, X\cap_{\mathrm{stable}} H)$ vanish up to dimension $k< n$. We conclude.

\begin{cor}
Let $X\subset \mathbb{A}^d$ be a smooth $n$-dimensional tropical variety, and let $H$ denote a chamber complex in $\mathbb{A}^d$ for which the stable intersection is well-defined. Then $X$ is, up to homotopy equivalence, obtained from $X\cap_{\mathrm{stable}} H$, the limit of $X\cap H+vt$, by successively attaching $n$-dimensional cells. 

In particular, the inclusion $X\cap_{\mathrm{stable}} H \hookrightarrow X$ induces isomorphisms of homotopy groups resp.\ integral homology groups up to dimension $n-2$, and a surjection in dimension~$n-1$.
\end{cor}


%
%

\subsection*{Decomposing the variety, step by step.} It is possible to give a more ``combinatorial'' presentation of the proof of Lemma~\ref{mlem:lef_to_convex_cell} by exhibiting how the cells of a slightly refined version of $X$ are attached, one by one, along the sublevel sets of the Morse function. This is more in line with the Banchoff--K\"{u}hnel--Kuiper Morse theory for polyhedral complexes (cf.\ \cite{Banchoff, Kuehnel}) and Forman's combinatorial Morse theory. However, the stratified Morse theory used above works equally well. The key observation is as follows:


%

For $X$, $f$ and $P$ as in Lemma~\ref{mlem:lef_to_convex_cell}, let $\wt{X}$ denote the common refinement of $X$ and $P$, 
\[\wt{X}\defeq  X\cdot P=\{\sigma\cap \tau: \sigma \in X,\ \tau \in P \}\]
\begin{prp}\label{prp:geom_to_comb}
Let $(\wt{X}, f)$ be as above and let $t\ge 0$. Then \[\wt{X} \cap {f^{-1}(-\infty,t]}\ \simeq\ \RS{\wt{X}}{f^{-1}(-\infty,t]}.\] 
\end{prp}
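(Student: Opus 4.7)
The plan is to recognize this proposition as a direct application of Proposition~\ref{prp:restriction_gives_def_retract}, which says that intersecting a polyhedral complex with the complement of an open convex set in the ambient space is the same, up to deformation retract, as the combinatorial restriction to that complement. So I need only exhibit the superlevel set $f^{-1}(t,\infty)$ as an open convex subset of $\TR^d$, and verify that the hypotheses apply to the polyhedral complex $\wt{X} = X\cdot P$.

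The boundary case $t=0$ is trivial: from the defining properties $\wt{f} \ge 0$ and $\wt{f}^{-1}\{0\} = \partial_{|\mo} P$ of the Morse function chosen in the proof of Lemma~\ref{mlem:lef_to_convex_cell}, the set $\wt{X}\cap f^{-1}(-\infty,0]$ already equals the subcomplex $\wt{X}\cap \partial_{|\mo} P$, so both sides of the desired equivalence agree on the nose. For $t>0$, I would set
\[
K_t \defeq f^{-1}(t,\infty) = \bigcup_{s>t} \wt{f}^{-1}[s,\infty).
\]
Since each $\wt{f}^{-1}[s,\infty)$ is closed and strictly convex by construction, the increasing union $K_t$ is a convex \emph{open} subset of $P^\circ$. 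Moreover, because $\wt{f}$ vanishes precisely on the mobile boundary $\partial_{|\mo} P$ and is strictly positive on $P^\circ$, the set $K_t$ is contained in the topological interior of $P$ in $\TR^d$, and is therefore open in $\TR^d$ itself.

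Denoting by $C_t = \TR^d \smallsetminus K_t$ its complement, we have $\wt{X}\cap C_t = \wt{X}\cap f^{-1}(-\infty,t]$. The sublevel set is compact, either because $P$ is a polytope, or (if $P$ is only pointed) because the gradient of $\wt{f}$ is taken uniformly outwardly oriented at infinity, so after passing to bounded support as in Section~\ref{ssec:bsp} no generality is lost. Proposition~\ref{prp:restriction_gives_def_retract} then yields a deformation retraction of $\wt{X}\cap C_t$ onto $\RS{\wt{X}}{C_t}$, which by definition equals $\RS{\wt{X}}{f^{-1}(-\infty,t]}$, giving the desired homotopy equivalence. The only bit of real care required in this argument is ensuring that $K_t$ is open in all of $\TR^d$ rather than merely in $P$ (so that its complement is a bona fide closed set to which the cited proposition applies); this is precisely why we insisted that $\wt{f}$ vanish on the \emph{mobile} part of $\partial P$ only, leaving the sedentary faces safely inside the closure of $K_t$'s complement.
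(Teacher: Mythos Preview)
Your proof is correct and follows exactly the approach the paper takes: the paper's own proof is the one-line ``Use Proposition~\ref{prp:restriction_gives_def_retract} and the convexity of superlevel sets of $\wt{f}$,'' and you have simply spelled out the verification that $K_t=\wt{f}^{-1}(t,\infty)$ is open and convex and that the compactness hypothesis is met. Your extra care about openness in $\TR^d$ versus in $P$, and about the $t=0$ boundary case, goes beyond what the paper records but is in the same spirit.
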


\begin{proof}
Use Proposition~\ref{prp:restriction_gives_def_retract} and the convexity of superlevel sets of $\wt{f}$.
\end{proof}


%

%

\section{Lefschetz Section Theorems for complements of tropical varieties.}
\label{sec:compl}

Motivated by the study of complements of subspace arrangements, several Lefschetz theorems were proven that apply to complements of affine varieties, prominently the theorems of Hamm--L{\^e}, cf.\ \cite{DimcaPapadima, HammLe}.

\begin{thm}[Hamm--L\^e \cite{HammLe}, cf.\ \cite{DimcaPapadima, Randell}]
Let $\varphi$ denote a non-constant homogeneous polynomial in $d$ variables. If $H$ is a generic hyperplane in $\mbb{C}^d$, then \[C(\varphi)\defeq  \{x\in \mbb{C}^d: \varphi(x)\neq 0\}\] is, up to homotopy equivalence, obtained from $C(\varphi)\cap H$ by attaching cells of dimension $d$.
\end{thm}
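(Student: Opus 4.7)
The plan is to adapt the strategy of the Andreotti--Frankel Theorem (Theorem~\ref{thm:Andreotti}) to the pair $(C(\varphi), C(\varphi)\cap H)$. Since $C(\varphi)$ is the complement of an algebraic hypersurface in $\mathbb{C}^d$, it is a smooth affine variety, hence a Stein manifold of complex dimension $d$. The goal is to build a proper plurisubharmonic Morse function on $C(\varphi)$ whose zero locus is exactly $C(\varphi)\cap H$ and whose critical points each contribute a cell of dimension $d$.

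Writing $H = \{L = 0\}$ for an affine linear form $L$, I would consider the function $f = |L|^2 + \varepsilon\rho$, where $\rho$ is a strictly plurisubharmonic exhaustion function on the Stein manifold $C(\varphi)$ and $\varepsilon > 0$ is small. The function $|L|^2$ is plurisubharmonic with minimum value $0$ attained exactly on $C(\varphi)\cap H$, and $\rho$ contributes properness and strict plurisubharmonicity. For generic choices of $\rho$ and (a small perturbation of) $L$, the sum $f$ is a proper Morse function on $C(\varphi)$; its sublevel sets $\{f \le t\}$ interpolate between a regular neighborhood of $C(\varphi)\cap H$ (which deformation retracts onto $C(\varphi)\cap H$) and all of $C(\varphi)$ as $t\to\infty$.

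The key index estimate is the standard Andreotti--Frankel fact: on a complex manifold of dimension $d$, the positivity of the Levi form of a strictly plurisubharmonic function forces the negative eigenspace of the real Hessian at any critical point to lie in a totally real subspace, which has real dimension at most $d$. Applying the main lemma of Morse theory, $C(\varphi)$ is built from $C(\varphi)\cap H$ by successively attaching cells of dimension at most $d$. To upgrade ``at most $d$'' to ``exactly $d$'', one uses that $C(\varphi)\cap H$ is itself an affine variety of complex dimension $d-1$, hence has the homotopy type of a CW complex of real dimension at most $d-1$ (again by Theorem~\ref{thm:Andreotti}), so a standard cell-trading argument in the relative CW complex $(C(\varphi), C(\varphi)\cap H)$ eliminates any relative cells of dimension strictly less than $d$.

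The main technical obstacle is the behavior of $f$ ``at infinity'' in $\mathbb{C}^d$: both the properness of $f$ and the requirement that its critical points are isolated and finite in number break down if $L$ is tangent to the stratification at infinity of $V(\varphi)$ in some projective compactification. Ruling this out is precisely the role of the genericity hypothesis on $H$, which must be transverse not only to the hypersurface $V(\varphi)$ itself but also to its divisor at infinity. Once this transversality is in place, the Morse--theoretic argument goes through as outlined, and the theorem follows.
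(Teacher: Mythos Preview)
The paper does not actually prove this theorem; it is quoted as a classical result (with references to Hamm, Hamm--L\^e, L\^e, Dimca--Papadima, Randell) to motivate the tropical analogue Theorem~\ref{mthm:left_comp_aff}. So there is no ``paper's proof'' to compare against, and your proposal must be judged on its own.

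Your outline contains a genuine gap at the step where you upgrade ``cells of dimension at most $d$'' to ``cells of dimension exactly $d$''. The Andreotti--Frankel index estimate for a strictly plurisubharmonic Morse function on a complex $d$-manifold gives Morse index $\le d$, so your function $f$ shows that $C(\varphi)$ is obtained from $C(\varphi)\cap H$ by attaching cells of dimension $\le d$. This is the \emph{wrong inequality} for Hamm--L\^e: what is needed is that the pair $(C(\varphi),C(\varphi)\cap H)$ is $(d-1)$-connected, i.e.\ that one can attach cells of dimension $\ge d$. Your proposed fix, cell-trading using only that $C(\varphi)\cap H$ has the homotopy type of a complex of dimension $\le d-1$, does not work: knowing $\dim A\le d-1$ and that $X$ is built from $A$ by cells of dimension $\le d$ gives no control on $\pi_i(X,A)$ for $i<d$. (A toy counterexample: $A=\{\mathrm{pt}\}$, $X=S^1\vee S^2$; here $A$ is $0$-dimensional, $X$ is built from $A$ by a $1$-cell and a $2$-cell, but one cannot trade away the $1$-cell since $\pi_1(X,A)\neq 0$.) Cell-trading requires the vanishing of the low-dimensional relative homotopy groups as \emph{input}, not as output.

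The standard proofs go in the opposite direction. One route is to apply Andreotti--Frankel not to $C(\varphi)$ but to the complement $C(\varphi)\setminus H=\{\varphi L\neq 0\}$, which is affine of complex dimension $d$ and hence has the homotopy type of a $d$-complex; Lefschetz duality inside the open $2d$-manifold $C(\varphi)$ (or a direct Morse-theoretic argument as in Hamm's papers, flowing toward $H$ rather than away from it) then converts the bound $H_i(C(\varphi)\setminus H)=0$ for $i>d$ into the desired vanishing $H_i(C(\varphi),C(\varphi)\cap H)=0$ for $i<d$. Another route uses stratified Morse theory on a projective compactification, with the genericity of $H$ controlling the strata at infinity. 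In either case the index estimate one needs is $\ge d$, not $\le d$, and this is precisely what your plurisubharmonic function cannot deliver.
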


\subsection*{The tropical case.} The purpose of this section is to provide a tropical analogue of this influential result. An \emph{almost totally sedentary hyperplane} is the closure in $\mathbb{A}^d$ of an affine hyperplane in~$\mathbb{R}^d$ as a polyhedron.

\begin{mthm}\label{mthm:left_comp_aff}
Let $X$ be a smooth $n$-dimensional tropical variety in $\mathbb{A}^d$, and let $C=C(X)$ denote the complement of $X$ in $\mathbb{A}^d$. Let furthermore $H$ be an almost totally sedentary hyperplane in $\mathbb{A}^d$, such that for any face $\sigma \in X$, $\aff \sigma$ intersects $H$ transversally or not at all. Then $C$ is, up to homotopy equivalence, obtained from $C\cap H$ by successively attaching $(d-n-1)$-dimensional cells. 

In particular, the inclusion of $C\cap H$ into $C$ induces an isomorphism of homotopy groups resp.\ integral homology groups up to dimension $d-n-3$, and a surjection in dimension $d-n-2$.
\end{mthm}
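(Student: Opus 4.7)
The plan is to follow the stratified Morse-theoretic strategy of Lemma~\ref{mlem:lef_to_convex_cell}, applied to the complement $C=\TR^d\setminus X$ in place of $X$ itself. Where that argument invokes the positive-side Lemma~\ref{mlem:positive-side-bergman} for Bergman fans, here I would instead use the complement-of-matroid characterization of Corollary~\ref{mcor:Comp_Bergman_Fan} together with the efficient Salvetti-type model of Theorem~\ref{thm:salvetti_efficient}.

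\emph{Reduction and Morse setup.} First I would observe that the almost totally sedentary hyperplane $H=\TR^{d-1}\times\{a\}$ splits $\TR^d$ into two closed pointed convex $d$-polyhedra $P^{\pm}$, each with mobile boundary $H$. It therefore suffices to prove the following analogue of Lemma~\ref{mlem:lef_to_convex_cell}: for any closed pointed convex $d$-polyhedron $P\subset\TR^d$, the space $C\cap P$ is obtained, up to homotopy equivalence, from $C\cap\partial_{|\mo}P$ by successively attaching cells of dimension $d-n-1$. I would then take exactly the Morse function $\wt f\colon P\to\R_{\ge 0}$ of the proof of Lemma~\ref{mlem:lef_to_convex_cell} (proper, smooth on each stratum of the stratification of $P$ by $X$, strictly convex superlevel sets, $\wt f^{-1}(0)=\partial_{|\mo}P$) and apply stratified Morse theory (Theorem~\ref{thm:smt_main_theorem}) to analyse how $C\cap P\cap\{f\le t\}$ evolves with~$t$.

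\emph{Local Morse data.} Strict convexity of the superlevel sets of $\wt f$ prevents interior critical points on open $C$-strata, so changes in $C\cap\{f\le t\}$ arise only at critical points of $\wt f|_\sigma$ for strata $\sigma$ of $X$. At such a critical point $x\in\sigma^\circ$, smoothness of the tropical variety identifies $X$ near $x$, up to integral affine equivalence, with $\sigma\times\B(M_x)$ for some matroid $M_x$ of rank $n-\dim\sigma+1$. A product-decomposition analysis of the stratified Morse data, parallel to Lemma~\ref{lem:smt_observation_on_level sets} but carefully accounting for the fact that $x\notin C$, would identify the contribution of $x$ with a relative pair of the form $(\CO K,K)$, where $K$ is the intersection of the complement of the link $\Lk_{\mbf 0}\B(M_x)$ in its ambient normal sphere with the closed hemisphere opposite to $\nabla\wt f(x)$.

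\emph{Link complement estimate and main obstacle.} By Corollary~\ref{mcor:Comp_Bergman_Fan}, the complement of $\Lk_{\mbf 0}\B(M_x)$ in the full normal sphere is homotopy equivalent to a wedge of spheres of dimension $d-n-1$; a halfspace refinement of this statement, carried out on the efficient Salvetti-type model $\BB-\NS$ of Theorem~\ref{thm:salvetti_efficient}, should give that $K$ itself is homotopy equivalent to a wedge of $(d-n-2)$-spheres. Attaching cones over such $K$ then contributes exactly cells of dimension $d-n-1$ at each critical event, and combined with the initial complex $C\cap\partial_{|\mo}P$ at $t=0$ this would complete the proof. The hard part will be precisely this hemispherical refinement of Corollary~\ref{mcor:Comp_Bergman_Fan}, which is the complementary analogue of Lemma~\ref{mlem:positive-side-bergman}; I expect it to proceed by a Quillen-style fiber argument applied to $\BB-\NS$ filtered by the halfspace defining the hemisphere, in direct parallel with the lexicographic shelling of Theorem~\ref{thm:bc} that underlies the positive side of the Bergman fan.
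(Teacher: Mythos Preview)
Your overall architecture is right, and you have correctly identified the crucial local ingredient: the hemispherical refinement of Corollary~\ref{mcor:Comp_Bergman_Fan} that you describe is precisely Lemma~\ref{mlem:negative-side-bergman} of the paper, and it is indeed proved there via Corollary~\ref{cor:attaching_cells_complement}, whose proof (Theorem~\ref{thm:attaching_cells_complement}) is a Quillen-type fiber argument just as you anticipate.

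However, your choice of Morse function creates a genuine gap. The claim that ``strict convexity of the superlevel sets of $\wt f$ prevents interior critical points on open $C$-strata'' is false: a function with strictly convex superlevel sets on a bounded $P$ has a unique global maximum, and this maximum generically lies in the open $d$-dimensional complement $C$ (it cannot lie on $X$ without violating the nondegeneracy conditions for a stratified Morse function). At that maximum the Morse data for $C$ forces the attachment of a $d$-cell, not a $(d-n-1)$-cell, and this ruins your dimension estimate. The same Morse function also produces critical points on positive-dimensional strata $\sigma$ of $X$, which would require you to analyze the tangential-times-normal Morse data for the \emph{complement} rather than for $X$ itself; your product decomposition into $\sigma\times\B(M_x)$ and the formula $(\CO K,K)$ do not straightforwardly survive this passage, since the tangential factor lives in $X$, not in $C$.

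The paper avoids both problems by a different, simpler choice of Morse function: it uses the (perturbed) distance $\mr d_{H'}$ from the hyperplane itself. This function is affine on each side of $H$, so it has no critical points on any open $d$-dimensional region of $C$, and after a generic perturbation its only critical points on $X$ are vertices. The tangential Morse data is then trivial, and the normal Morse data at each vertex is exactly the pair controlled by Lemma~\ref{mlem:negative-side-bergman}, contributing only $(d-n-1)$-cells. If you replace your $\wt f$ by this linear height function, the rest of your outline goes through essentially as written.
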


\begin{rmk}
More generally, in the situation of Theorem~\ref{mthm:left_comp_aff} one can take $H$ to be any closed polyhedron in $\mathbb{A}^d$ in sufficiently general position.
\end{rmk}

The central ingredient will be a relative version of Lemma~\ref{mlem:positive-side-bergman}.

\begin{lem}\label{mlem:negative-side-bergman}
Let $(M,\B,H^+,r)$ be as in Lemma~\ref{mlem:positive-side-bergman}, let $H^-\defeq \RR^{|M|-1}{{\setminus}} H^+$, and let $C=\RR^{|M|-1}{{\setminus}}\B$. Then $C$ is, up to homotopy equivalence, obtained from $C\cap H^-$ by attaching cells of dimension $|M|-r-1$.
\end{lem}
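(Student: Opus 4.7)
The plan is to reduce to Corollary~\ref{cor:attaching_cells_complement} via radial retraction and combinatorial identification.

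Since $\B$ and $H^-$ are both cones with apex $\mbf{0}$, the radial retraction $\RR^{|M|-1}{\setminus}\{\mbf{0}\}\to S\defeq S^{|M|-2}$ carries $\B{\setminus}\{\mbf{0}\}$ onto $K\defeq\Lk_{\mbf{0}}\B$ and preserves $H^-$. Hence $(C, C\cap H^-)$ deformation retracts onto $(S{\setminus} K,\,(S\cap H^-){\setminus} K)$, and it suffices to analyze this spherical pair.

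Let $\mbf{n}$ denote the interior unit normal to $H^+$ and define $\omega\in\RR^{[|M|]}$ by $\omega_i\defeq\mbf{n}\cdot e_i$; the balancing relation $\sum_i e_i=\mbf{0}$ gives $\omega\cdot[|M|]=0$, and the general position of $H$ makes $\omega$ generic. Realize $S$ as the order complex of $\BB$ by placing each vertex $T\in\BB$ at $\frac{1}{|T|}\sum_{i\in T}e_i$; under this realization $K$ coincides with $\Delta(\LL)$, and a vertex $T$ lies in $H^-$ iff $T\in\BB^{\le 0}$. For any simplex with vertices straddling $H$, its intersection with $H^-$ retracts (by straight lines in barycentric coordinates) onto the face spanned by its $H^-$-vertices; gluing these retractions shows that $S\cap H^-$ deformation retracts onto $\Delta(\BB^{\le 0})$. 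Combined with Lemma~\ref{lem:deletion_in_simplicial}, this yields a homotopy equivalence of pairs $(S{\setminus} K,\,(S\cap H^-){\setminus} K)\simeq(\BB-\LL,\,\BB^{\le 0}-\LL^{\le 0})$. Corollary~\ref{cor:attaching_cells_complement} applied at $t=0$ (noting $t\le\min\{0,\omega\cdot[|M|]\}=0$) then asserts that $\BB-\LL$ is obtained from $\BB^{\le 0}-\LL^{\le 0}$ by attaching $(|M|-r-1)$-cells, and transporting this attaching structure back through the homotopy equivalences established above finishes the proof.

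The main obstacle lies in the combinatorial identification step: one must verify simultaneously that the hemisphere retracts onto $\Delta(\BB^{\le 0})$ and that this retraction commutes (up to homotopy) with the deletion of $K$, so as to obtain an honest homotopy equivalence of pairs rather than of the two spaces separately. Once this compatibility is set up, the lemma follows by direct translation of the already-established combinatorial corollary.
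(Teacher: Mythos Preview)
Your proof is correct and takes essentially the same approach as the paper: both reduce to Corollary~\ref{cor:attaching_cells_complement} by radially retracting $(C,\, C\cap H^-)$ to the link pair and then identifying this with $(\BB-\LL,\, \BB^{\le 0}-\LL^{\le 0})$ via the order-complex realization and the weight $\omega$. The only cosmetic difference is that the paper routes the retraction through the permutahedral fan $\B'=\B(\BB)$ (writing $\RR^{|M|-1}{\setminus}\B\simeq \B'{\setminus}\B$) rather than working directly on the sphere, and uses $\BB^{<0}$ in place of your $\BB^{\le 0}$, which coincide by genericity of~$\omega$.
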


\newcommand{\PF}{\mathfrak{P}}

\begin{proof}
Let $\BB$ denote the Boolean lattice on the ground set of $M$, and let $\PF$ denote the associated Bergman fan. Finally, let $\om$ denote the weight associated to $H^+$ as given in the proof of Lemma~\ref{mlem:positive-side-bergman}, so that \[\Lk_{\mbf{0}} \RS{\B}{{H}^-}\cong \LL^{<0}.\]

Then $\RR^{|M|-1}{{\setminus}} \B \simeq \PF{{\setminus}} \B $ deformation retracts to $\BB-\LL$, and the retract restricts to a deformation retract of $H^-{{\setminus}} \B$ to \[\Lk_{\mbf{0}} (\B \cap H^-) \simeq\ \BB^{<0}- \LL^{<0}.\] 
Hence, the pair $(C,C\cap H^-)$ is homotopy equivalent to the pair $(\BB-\LL,\BB^{<0}-\LL^{<0})$. The claim follows by Corollary~\ref{cor:attaching_cells_complement}.
\end{proof}

We now need stratified Morse theory, not for polyhedra but for their complements. The main tools are very similar to those of Section~\ref{ssc:strat} and we refer the reader to \cite[Part~I]{GM-SMT} for details concerning this aspect of stratified Morse theory.

\begin{proof}[\textbf{Proof of Theorem~\ref{mthm:left_comp_aff}}.]
Perturb the distance $\mr{d}_{H}$ to $H$ to a Morse function $\wt{f}$ with convex sublevel sets. We now apply stratified Morse theory for complements of polyhedra: It suffices to prove that, if $x$ is any critical point of $f$, and $t$ its value, and $\varepsilon>0$ chosen small enough such that $[t,t+\epsilon)$ contains no further critical values of $f$, then $C_{\le t+\epsilon}=C\cap \wt{f}^{-1}[0,t+\varepsilon]$ is obtained from $C_{\le t}$ by successively attaching $(d-n-1)$-cells. 

Now, clearly the minimal stratum of $X$ containing $x$ is $x$ itself, so that the tangential Morse data at $x$ is trivial. It remains to estimate the normal Morse data at $x$. If we set \[H_x^-\defeq  \wt{f}^{-1} [t,\infty)\ \ \text{and}\ \ H_x\defeq  \wt{f}^{-1} \{t\} =\partial H_x^-,\] it is given by the relative link \[\big(\TT^1_x (X\cap H_x^-),\TT^1_x (X\cap H_x)\big).\] That is, $C_{\le t+\epsilon}$ is obtained from $C_{\le t}$ by attaching $\TT^1_x (X\cap H_x^-)$ along $\TT^1_x (X\cap H_x)$. Since by Lemma~\ref{mlem:negative-side-bergman}, $\TT^1_x (X\cap H_x^-)$ is obtained from $\TT^1_x (X\cap H_x)$ by successively attaching $(d-n-1)$-cells, the claim follows by the main lemma for stratified Morse functions.
\end{proof}

\section{Tropical {\it (p,q)}-homology.} The concept of $(p,q)$-homology was introduced by Mikhalkin~\cite{Shaw, IKMZ}. Despite its name, $(p,q)$-homology theory should be thought of as an analogue of Hodge theory in complex algebraic geometry. For more details, we refer the reader to \cite{IKMZ,MZEigenwave, Shaw, zbMATH06400709}. Here we discuss $(p,q)$-homology theory only over the reals, instead of tropical integral Hodge theory. For the integral case, see Section~\ref{ssc:intHodge}.

\subsection{\emph{p}-groups.} The coefficients of $(p,q)$-homology theory are given by the $p$-groups, which form analogues to the sheaf of differential forms in classical Hodge theory.

\newcommand{\FZ}{\mathbf{F}}

\begin{dfn}[$p$-groups] Let $\Sigma$ denote a polyhedral fan pointed at $v$, i.e.,\ any collection of rational polyhedral cones in $\RR^d$ pointed at $\mbf{0}$. For $p\ge 0$, we associate to $\Sigma$ the subgroup $(\FF_p\Sigma)_{|v}$ of $\bigwedge^p \mathbb{R}^d$ generated by elements $v_1\wedge v_2\wedge \cdots \wedge v_p$, where $v_1,v_2,\cdots,v_p$ are real vectors that lie in a common subspace $\lin{\sigma}, \sigma\in \Sigma$.
The groups $(\FF_p\Sigma)_{|v}$ are called the \emph{$p$-groups}.
\end{dfn}

\subsection{Homology from {\it p}-groups.} Let $X$ denote a smooth tropical variety (embedded or abstract), or more generally any polyhedral complex in tropical space. If $y$ is a point of $X$, and $p$ is a nonnegative integer, then we let $(\FF_p X)_{|y}$  denote the $p$-group of $\tT_y X$ (as defined in Section~\ref{sec:tangent}). 

Notice that if $\sigma$ is the minimal face containing $y$, and $z$ is any point contained in the relative interior of a face $\tau$ of $\sigma$, then we have a natural map \[(\FF_p X)_{|y}\ \longrightarrow\ (\FF_p X)_{|z}\]
induced by the map $\mr{d}_{\sigma\rightarrow\tau}$ defined above. This provides a local system of coefficients for a singular chain complex $C_\bullet(X;\FF_p X)$, the complex of \emph{(singular) $(p,q)$-chains}. 
The associated homology groups are the \emph{(singular) $(p,q)$-groups}.

\subsection{Some useful facts.} The $(p,q)$-groups are natural analogues of the classical Hodge groups in algebraic geometry: in \cite{IKMZ} it is proven that for $X$ a smooth tropical variety obtained as the limit of a one-parameter family $(X_t)$ of smooth complex projective varieties, the Hodge groups of a generic fiber $X_t$ are closely related to the $(p,q)$-groups of $X$. 
Moreover, it follows from classical arguments that, for realizable varieties $X$ (i.e.,\ varieties arising from classical varieties via tropicalization), we have the conjugation symmetry $H_q (X;\FF_p X)\cong H_p (X;\FF_q X)$, cf.\ \cite[Proposition~7.6]{MZEigenwave}. Not everything is analogous to the classical situation though: the $(p,q)$-homologies do not seem to satisfy the naive analogue of the Hodge Index Theorem \cite[Theorem~3.3.5]{Shaw}, and the extent to which positivity plays a role in the study of $(p,q)$-homology is not clear. 

We close this section by mentioning some useful results to keep in mind.

\begin{lem}[cf.\ {\cite[Proposition~5]{MZEigenwave}}] The $(p,q)$-groups are independent of the cell structure of the tropical variety chosen. \end{lem}

\begin{lem}\label{lem:link}
Let $X$ denote a polyhedral fan with conepoint $v$, and let $P$ denote a convex polytope containing $v$ in its interior. Then we have a natural map of chain complexes
\[C_{q-1}(X\cap \partial P ; \FF_p X)\longrightarrow C_q(X \cap P , X \cap \partial P ; \FF_p X)\]
that induces an isomorphism of $(p,q)$-groups.
\end{lem}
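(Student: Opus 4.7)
The strategy is to argue that $X\cap P$ is tropically acyclic (its reduced $(p,q)$-homology vanishes) via a coning argument, and to then read off the desired isomorphism from the long exact sequence of the pair $(X\cap P,\ X\cap\partial P)$. The natural map in the statement is the ``coning with $v$'' map, which is well-defined once the cell structure has been chosen appropriately.

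First, by the immediately preceding theorem on cell-structure invariance of tropical Hodge groups, I am free to refine the cell structures. I would choose a refinement of $X\cap P$ in which every face either lies in $X\cap\partial P$, equals $\{v\}$, or has the form $v\ast\tau$ for some face $\tau$ of $X\cap\partial P$; equivalently, $X\cap P$ becomes literally the cone on $X\cap\partial P$ with apex $v$. In this cell structure the natural map of the statement is the coning map $\Phi(\tau):=v\ast\tau$ on generators of $C_{q-1}(X\cap\partial P;\FF_p X)$.

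Second, I would construct a chain contraction $h$ of $C_\bullet(X\cap P;\FF_p X)$ by the analogous coning operation: $h(\sigma):=v\ast\sigma$ if $v\notin\sigma$, and $h(\sigma):=0$ otherwise. The classical cellular cone identity $\partial h+h\partial=\mathrm{id}-\pi$, where $\pi$ projects onto the augmentation at $v$, then certifies that the reduced tropical Hodge groups of $X\cap P$ are trivial. The long exact sequence of the pair in $(p,q)$-homology consequently collapses, delivering the required isomorphism \[H_{p,q}(X\cap P,\ X\cap\partial P)\ \cong\ \wt{H}_{p,q-1}(X\cap\partial P),\] and a chain-level inspection of the connecting homomorphism identifies it (up to sign) with the coning map $\Phi$.

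The essential --- indeed, the only --- obstacle is not the topology, which is exactly the classical cone-contraction argument, but the verification that the coning operations $\Phi$ and $h$ are well defined on the local coefficient system $\FF_p X$. The hypothesis that $X$ is a polyhedral fan with cone point $v$ is used precisely at this step: for any face $\sigma$ of $X\cap P$ not containing $v$, both $\sigma$ and $v\ast\sigma$ lie in the same cone $C$ of the fan $X$, so their tangent fans in $X$ both equal $\lin C$, and the coefficient groups $\FF_p(\tT_\sigma X)$ and $\FF_p(\tT_{v\ast\sigma}X)$ are canonically identified. Once this coherence is secured, the cone-contraction identity transports verbatim through the tropical coefficient system, and the rest of the proof is formal.
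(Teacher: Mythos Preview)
Your proof is correct and follows the same approach as the paper's (very terse) proof, which also defines the map via the join $\sigma\mapsto v\ast\sigma$ and leaves the isomorphism check to the reader; you have supplied the cone-contraction and long-exact-sequence details the paper omits. One small imprecision: the tangent fan of $X$ at a face $\sigma$ with relative interior in the relative interior of a cone $C$ is $\TT_C X$ (the star of $C$ in $X$), not just $\lin C$, but your key point---that $\sigma$ and $v\ast\sigma$ have canonically identified tangent fans and hence identical $\FF_p$-groups---remains valid.
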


Here, the coefficient system on $C_q(X \cap P , X \cap \partial P ; \FF_p X)$ is defined as the pullback along the inclusion map $X\cap \partial P \hookrightarrow X$.


\begin{proof}
The desired map of chains is given by the join operation, which sends
$\sigma\in |X|\cap \partial P$ to the cell $v\ast \sigma$.\end{proof}


\subsection{Transversal chains}\label{ssec:transv}
According to a result of Mikhalkin and Zharkov, every homology class can be assumed to arise from a representative in general position.
\begin{dfn} 
Let $X$ be a smooth tropical $n$-dimensional variety, and let $\Sigma$ denote a decomposition of $X$ into polyhedra. We call a (singular) $(p,q)$-chain $c$ \emph{transversal} with respect to\ $\Sigma$ if 
every cell in the $k$-skeleton of the support of $c$ does not intersect the $\ell$-skeleton of $\Sigma$, provided that $k+\ell \le n-1$. 
We call a chain in $X$ \emph{transversal} if it is transversal with respect to some triangulation of $X$.
 \end{dfn}

\begin{lem}[cf.\ {\cite[Lemma~6.8]{MZEigenwave}}]\label{lem:push_it}
Let $X$ denote an $n$-dimensional smooth tropical variety. Let $c\in C_q(X;\FF_p X)$ denote a $(p,q)$-chain of $X$ such that for some face $\sigma$ of $X$ we have $\partial c\cap \rint \St_\sigma {X}=\emptyset$. If $q\le n-1$, then there is a $(p,q)$-chain $\wt{c}\in C_q(X;\FF_p X)$ which
\begin{compactenum}[\rm (1)]
\item is transversal with respect to $X$ when restricted to the open star of $\sigma$ in $X$,
\item is isomorphic to $c$ outside of $\St_\sigma^\circ X$, and such that
\item $(c-\wt{c})$ is the boundary of a $(p,q+1)$-chain of $X$ supported in $\St_\sigma X$.
\end{compactenum}
\end{lem}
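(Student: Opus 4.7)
The plan is to work entirely inside the star $\St_\sigma X$ (since we want $\wt c = c$ outside) and to produce $\wt c$ by a coning construction that puts the result into general position with respect to the cell structure $\Sigma$. First I would restrict $c$ to $\St_\sigma X$: because $\partial c$ avoids $\rint \St_\sigma X$, the restriction $c_0 \defeq c|_{\St_\sigma X}$ is a relative $(p,q)$-cycle of the pair $(\St_\sigma X, \partial \St_\sigma X)$, and modifying only $c_0$ is enough. By smoothness we may work in a local chart where $X$ is an open subset of $\B(M)\times \TR^{\Sed(\sigma)}$, so that $\St_\sigma X$ has a canonical conical (linear) structure over $\partial \St_\sigma X$ with apex in the relative interior of $\sigma$, and the coefficient system $\FF_p X$ is pulled back from $\partial \St_\sigma X$ along this coning.

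Next I would apply Lemma~\ref{lem:link} (with $P$ a small transverse polytope around an interior point $v$ of $\sigma$) to identify the relative $(p,q)$-homology of $(\St_\sigma X, \partial \St_\sigma X)$ with the $(p,q-1)$-homology of $\partial \St_\sigma X$, via the join-with-$v$ map $\tau\mapsto v\ast \tau$. Under this isomorphism $c_0$ is represented by some $(p,q-1)$-cycle $b$ on $\partial \St_\sigma X$, and any chain homologous to $b$ on $\partial \St_\sigma X$ gives, after coning from $v$, a chain in $\St_\sigma X$ that is homologous to $c_0$ rel $\partial \St_\sigma X$, i.e.\ differs from $c_0$ by a boundary of a $(p,q+1)$-chain supported in $\St_\sigma X$.

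The transversality step is then the following general-position argument. Choose $b$ to be a cellular $(p,q-1)$-cycle with respect to some sufficiently fine triangulation of $\partial \St_\sigma X$ (possible by standard cellular approximation), and, by a small affine perturbation of the apex $v$ inside $\rint \sigma$, arrange that the straight cone $\wt c \defeq v \ast b$ meets every $\ell$-skeleton of $\Sigma$ in a set of the expected codimension. Since $\dim \wt c = q < n$, the expected codimension in each facet of $X$ is positive whenever $k+\ell<n$, so the perturbed cone is transversal in the sense of Section~\ref{ssec:transv}. The output $\wt c$ agrees with $c$ outside $\St_\sigma X$, is transversal inside, and the $(p,q+1)$-chain $h$ produced by the coning homotopy between $c_0$ and $\wt c$ in $\St_\sigma X$ witnesses $c-\wt c = \partial h$.

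The main obstacle I anticipate is bookkeeping with the coefficient system $\FF_p X$: the tropical tangent fans $\tT_\tau X$ change as $\tau$ varies through strata of different sedentarities, and one needs the coning-and-perturbation procedure to be compatible with the maps $\mr{d}_{\sigma\rightarrow\tau}^\ast$ between $p$-groups so that $\wt c$ really is a chain (not merely a topological cycle). This is handled by the fact that, in the local model $\B(M)\times \TR^{\Sed(\sigma)}$, the sheaf $\FF_p X$ is constant along the radial lines through $v$, so the join-with-$v$ map lifts canonically to a chain map on $(p,\bullet)$-chains, and small generic perturbations of $v$ in $\rint \sigma$ do not change the $\FF_p$-data along faces they cross — the very property that makes Lemma~\ref{lem:link} a chain-level statement rather than just a homotopy equivalence.
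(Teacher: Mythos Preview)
There is a genuine gap in step~3 of your argument. The straight cone $\wt c = v\ast b$ with apex $v\in\rint\sigma$ is \emph{never} transversal to the stratification of $X$ unless $\sigma$ is already a facet. Indeed, $v$ is a $0$-cell of the support of $\wt c$, and $v$ lies in the stratum $\rint\sigma$, hence in the $(\dim\sigma)$-skeleton of $\Sigma$; transversality at this $0$-cell would force $0+\dim\sigma\ge n$. Perturbing $v$ \emph{within} $\rint\sigma$ is of no help: the apex remains in $\sigma$, and there is no room to move it out while keeping the cone inside $\St_\sigma X$ and keeping the join map a chain map for $\FF_p$. (Concretely: take $\sigma$ to be a vertex of a $2$-dimensional Bergman fan and $c$ a $1$-chain through it; your $\wt c$ is again a $1$-chain through the vertex.) So the ``expected codimension'' count you invoke is illusory: the cone is pinned to a low-dimensional stratum and cannot be put in general position by moving its apex alone.

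What your coning argument misses, and what the result actually needs, is the use of the \emph{smoothness} (Bergman-fan local model and balancing) rather than merely the conical structure of the star. The paper does not reproduce Mikhalkin--Zharkov's proof, but the mechanism is visible in Figure~\ref{fig:stabilization} and in the later directional version Lemma~\ref{mlem:push_it}: near $\sigma$ one uses the balancing property of the local Bergman fan to rewrite the portion of $c$ passing through $\sigma$ as a homologous chain supported in the \emph{cofaces} of $\sigma$, i.e.\ pushed off the low stratum into adjacent higher-dimensional cells. In the language of Lemma~\ref{lem:link}, this amounts not to coning the class up from the boundary, but to showing that the boundary cycle $b$ bounds in $\partial\St_\sigma X$ (with the appropriate $\FF_p$-coefficients), which is exactly where the matroidal structure (connectivity of the geometric lattice, Lemmas~\ref{mlem:bergman} and~\ref{lem:intervals_pq} in the paper) enters. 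Your proposal treats the star as if it were an ordinary cone in $\RR^n$, where generic perturbation suffices; the point of the lemma is that in a stratified space one must instead redistribute along the fan, and this is where the hypothesis that $X$ is smooth (and not merely polyhedral) is used.
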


Mikhalkin and Zharkov proved that this ``Pushing Lemma'', concerning pushing a chain away from a low-dimensional face, holds also with respect to integral $(p,q)$-homology theory, in contrast to the ``directional Pushing Lemma'' 
proved here later.

\begin{figure}[htb]
\centering 
 \includegraphics[width=0.78\linewidth]{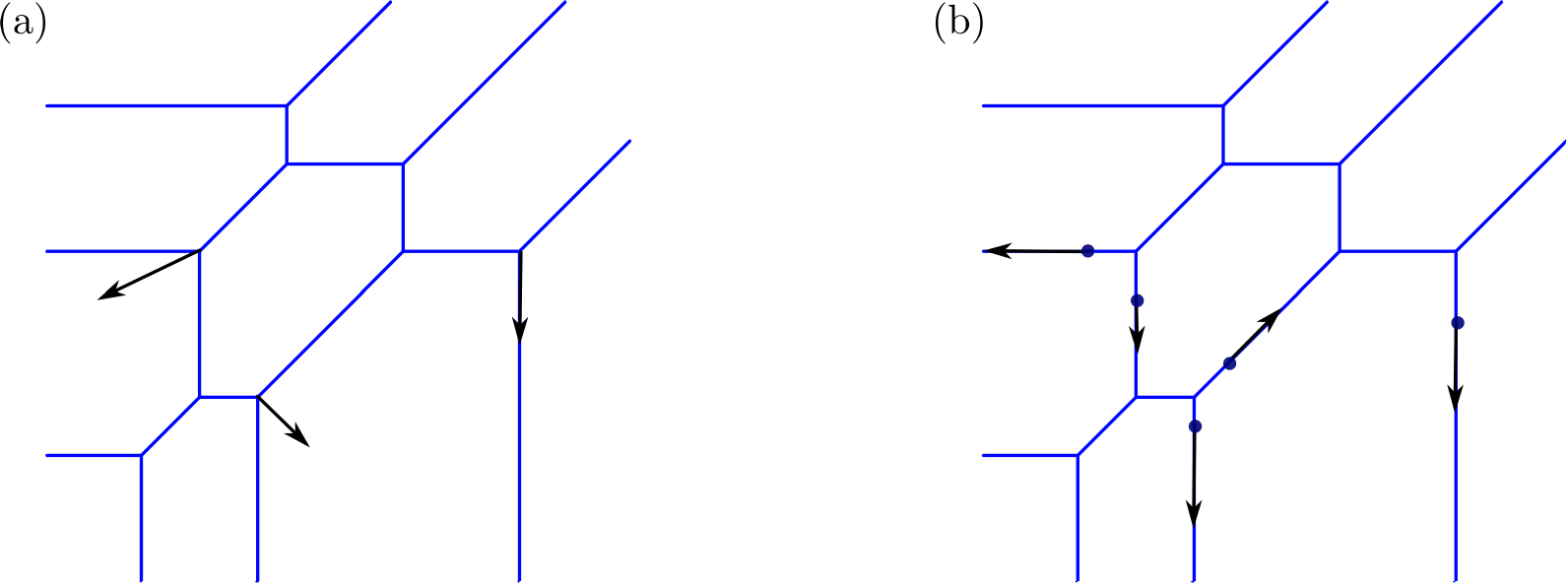} 
 \caption{\small Pushing a chain to transversality.} 
 \label{fig:stabilization}
\end{figure}

Let us note two corollaries: The first is that every homology class can be assumed transversal.

\begin{cor}[{\cite[Corollary~6.11]{MZEigenwave}}]\label{lem:stable}
Every $(p,q)$-homology class in a smooth tropical variety $X$ is represented by a transversal cycle. Moreover, a $(p,q)$-chain $\gamma$ in $X$ with transversal boundary is homologous to a transversal chain.
\end{cor}

The second consequence is that Lemma~\ref{lem:push_it}, applied to $(0,q)$-chains, gives a new proof of
Lemma~\ref{mlem:bergman}, at least for the special case of integral homology. We therefore recover the classical result of 
Folkman \cite{Folkman} on the homology of unfiltered geometric lattices.

\begin{cor}
The geometric lattice of a rank $r$ matroid is Cohen--Macaulay of dimension $r-2$.
\end{cor}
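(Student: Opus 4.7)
The plan is to combine Lemma~\ref{lem:push_it} (the pushing lemma applied to $(0,q)$-chains, which are simply integer $q$-chains) with the local matroidal structure of Bergman fans provided by Lemma~\ref{lem:intersections}.

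I would first identify the order complex of the proper geometric lattice $\LL[M]$ of a rank-$r$ matroid $M$ with the combinatorial link $\Lk_{\mbf{0}} \B(M)$ at the apex of the Bergman fan. By Lemma~\ref{lem:intersections}, tangent and normal fans of rays in $\B(M)$ are again Bergman fans of restriction and contraction matroids, so iterated links of faces in $\Lk_{\mbf{0}} \B(M)$ correspond to joins of smaller geometric lattices. Using Lemma~\ref{lem:join}, an induction on the ground set cardinality then reduces Cohen--Macaulayness over $\Z$ to the single top vanishing statement $\wt{H}_q(\Lk_{\mbf{0}} \B(M); \Z) = 0$ for all $q < r - 2$ and all rank-$r$ matroids $M$.

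For this base case, I would realize a $q$-cycle $z$ in $\Lk_{\mbf{0}} \B(M)$ as a classical cycle supported in the sphere slice $\B \cap \partial P$ of a small polytope $P$ about the apex. Since $\B$ is contractible (being a fan with apex $\mbf{0}$), there is a $(q+1)$-chain $b$ in $\B$ with $\partial b = z$. The dimensional bound $q + 1 \leq r - 2 < n = r - 1$ is precisely the hypothesis of Lemma~\ref{lem:push_it}, which yields a transversal replacement $\wt{b}$ with the same boundary. Transversality against the $0$-skeleton $\B^{(0)} = \{\mbf{0}\}$ in dimension $q + 1 < n$ forces $\mbf{0} \notin \supp \wt{b}$; hence $\wt{b}$ lies in $\B \setminus \{\mbf{0}\}$, which deformation retracts radially onto $\Lk_{\mbf{0}} \B$, providing the desired nullhomology of $z$.

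The main technical obstacle is that naively applying Lemma~\ref{lem:push_it} to $b$ with $\sigma = \mbf{0}$ fails: the boundary $\partial b = z$ lies in the open star $\rint \St_{\mbf{0}} \B = |\B|$, since every face of a fan contains its apex, violating the disjointness hypothesis. I expect this to be circumvented by first invoking Corollary~\ref{lem:stable} to transversalize $z$ itself inside $\B$, and then applying pushing relative to positive-dimensional cones $\sigma$ whose open stars are disjoint from the transversalized $z$; alternatively, one may embed $\B$ as a chart in a larger smooth tropical variety in which $\mbf{0}$ sits as a positive-codimension face. In either path, the dimensional inequality $q + 1 < n$ remains the essential engine of the argument.
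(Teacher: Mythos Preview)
Your approach is essentially the paper's: apply the Mikhalkin--Zharkov pushing lemma (Lemma~\ref{lem:push_it}) to $(0,q)$-chains, using that $\FF_0\equiv\Z$, to kill low-dimensional homology of $\Lk_{\mbf{0}}\B(M)$; then propagate to all links via the matroidal hereditary structure. The paper states this in a single sentence, and your write-up is a faithful elaboration.

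The ``obstacle'' you raise, however, is not genuine. You already pass to the refinement of $\B$ by the polytope $P$ when you realize $z$ on the slice $\B\cap\partial P$. In that refined polyhedral structure the vertex $\mbf{0}$ is contained only in the cells of $\B\cap P$, so
\[
\rint\St_{\mbf{0}}(\B\cdot P)=\B\cap\rint P,
\]
which is disjoint from $\partial b=z\subset\B\cap\partial P$. The hypothesis of Lemma~\ref{lem:push_it} is therefore satisfied with $\sigma=\mbf{0}$, and the lemma yields a homologous chain $\wt b$ that is transversal in the open star of $\mbf{0}$; since $q+1<n=r-1$, transversality against the $0$-skeleton forces $\mbf{0}\notin\supp\wt b$, and the radial retraction finishes the argument. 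Your two suggested workarounds (pre-transversalizing $z$ and pushing along positive-dimensional cones, or embedding $\B$ into a larger variety) are unnecessary detours; the point is simply that ``face of $X$'' in Lemma~\ref{lem:push_it} refers to whatever polyhedral decomposition you are computing cellular chains in, and you are free to refine.

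For the passage from global acyclicity to full Cohen--Macaulayness, your induction via Lemma~\ref{lem:intersections} is fine; one could equally invoke directly that intervals in a geometric lattice are geometric lattices (the content of Lemma~\ref{lem:intervals} with trivial weight), which gives the same reduction without translating through Bergman fans.
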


\section{Lefschetz Section and Vanishing Theorems for $(p,q)$-groups.} \label{sec:hodge}
The Lefschetz Section Theorem for Hodge groups was first established by Kodaira and Spencer.

\begin{thm}[Kodaira--Spencer \cite{KS}]\label{thm:ks}
Let $X$ be any smooth projective algebraic $n$-dimensional variety in $\mbb{CP}^d$, and let $H$ denote a hyperplane in $\mbb{CP}^d$. Then the inclusion $X\cap H \hookrightarrow X$ induces a map 
\[H^q(X,\Omega^p_X)\rightarrow H^q(X\cap H,\Omega^p_{X\cap H})\]
that is an isomorphism provided $p+q\le n-2$, and an injection for $p+q=n-1$.
\end{thm}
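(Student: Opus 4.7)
The plan is to deduce this classical statement from the Lefschetz Section Theorem for complex singular cohomology together with the Hodge Decomposition. First I would invoke the Lefschetz hyperplane theorem (Theorem~\ref{thm:clef}, in its cohomological form) to conclude that the restriction map
\[
j^\ast:H^k(X;\mathbb{C})\longrightarrow H^k(X\cap H;\mathbb{C})
\]
is an isomorphism for $k\le n-2$ and an injection for $k=n-1$. This is the geometric/topological input, and we will read off information about Hodge pieces by noting that $j^\ast$ respects extra structure.

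The key input from Hodge theory is that for every smooth complex projective variety $Y$ one has the Hodge decomposition $H^k(Y;\mathbb{C})=\bigoplus_{p+q=k}H^q(Y,\Omega^p_Y)$, which is a direct sum of pure $(p,q)$ Hodge substructures. I would next verify (using the compatibility of the Hodge-to-de Rham spectral sequence with pullback of forms along $j$, or equivalently that $j$ is holomorphic) that $j^\ast$ is a morphism of pure Hodge structures of weight $k$; in particular it commutes with the Hodge decomposition and sends the $(p,q)$-summand of $H^k(X;\mathbb{C})$ into the $(p,q)$-summand of $H^k(X\cap H;\mathbb{C})$. Consequently the restriction map
\[
H^q(X,\Omega^p_X)\longrightarrow H^q(X\cap H,\Omega^p_{X\cap H})
\]
appears as a direct summand of $j^\ast$ in degree $k=p+q$. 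Because a direct sum of linear maps is an isomorphism (resp.\ injection) if and only if each summand is, we read off that this summand is an isomorphism for $p+q\le n-2$, and an injection for $p+q=n-1$.

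The only remaining point for the statement as given concerns surjectivity for $p+q=n-1$, which is slightly stronger than what the Hodge symmetry of the injection statement yields directly. The cleanest way to get it is to combine Serre duality $H^q(X,\Omega^p_X)\cong H^{n-q}(X,\Omega^{n-p}_X)^\ast$ (and likewise on $X\cap H$, replacing $n$ by $n-1$) with the isomorphism range of the statement already established in degrees $p+q\le n-2$; dualising a surjection at low degrees gives an injection at high degrees on $X\cap H$, which matches the corestriction. The main conceptual obstacle in this entire scheme is the verification that $j^\ast$ is a morphism of Hodge structures; everything else is a formal consequence of the Lefschetz theorem and linear algebra of Hodge decompositions. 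Notably, this strategy is precisely the one the paper later says is unavailable in the tropical setting, so for the tropical analogue one will instead need to prove a vanishing theorem of Akizuki--Kodaira--Nakano type directly (Theorem~\ref{mthm:AKN_trop}) and run a long exact sequence argument in place of the Hodge decomposition.
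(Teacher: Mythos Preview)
Your approach---deduce the statement from the topological Lefschetz theorem (Theorem~\ref{thm:clef}) for $\mathbb{C}$-coefficients together with the Hodge decomposition and its functoriality under the holomorphic pullback $j^\ast$---is exactly the argument the paper sketches. One small caveat: the cohomological Lefschetz theorem gives an \emph{injection} $H^{n-1}(X;\mathbb{C})\hookrightarrow H^{n-1}(X\cap H;\mathbb{C})$, hence an injection on each Hodge summand; the word ``surjection'' at $p+q=n-1$ in the statement is a slip that the paper's one-line proof does not address either, and your Serre-duality patch does not close it as written (the Serre dual of $j^\ast$ is the Gysin pushforward $j_\ast$, not another restriction, so you would need separate control of $j_\ast$).
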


For a standard proof of this result, recall that by the Hodge Decomposition Theorem, we have
\[H^k(X,\mbb{C})\ =\ \bigoplus_{p+q=k} H^{p}(X,\Omega^q_X).\]
Together with the Dolbeault operators, this decomposition is functorial; the result now follows from Theorem~\ref{thm:clef} for complex coefficients. 
Alternatively, one can prove the theorem directly and algebraically, using the Vanishing Theorem of Akizuki--Kodaira--Nakano:

\begin{thm}[Akizuki--Kodaira--Nakano Vanishing Theorem]
Let $X\in \mbb{CP}^d$ denote a smooth, compact projective $n$-dimensional variety, and let $L\rightarrow X$ be a positive line bundle. Then 
\[H^q(X,\Omega^p_L)=0\ \text{for all}\ p+q>n.\]
\end{thm}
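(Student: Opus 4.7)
The plan is to prove this vanishing via Hodge theory combined with the Bochner--Kodaira--Nakano identity, using the positivity of $L$ to supply the crucial sign. Since $L$ is positive, I would choose a Hermitian metric $h$ on $L$ whose Chern curvature $\Theta(L,h)$ is a positive $(1,1)$-form, and take the K\"ahler metric $\omega = \frac{\sqrt{-1}}{2\pi}\Theta(L,h)$ on $X$. By Hodge theory for the Hermitian bundle $\Omega^p\otimes L$ over the compact K\"ahler manifold $X$, each class in $H^q(X,\Omega^p_L)$ is uniquely represented by an $L$-valued harmonic $(p,q)$-form $\alpha$ satisfying $\bar\partial\alpha = 0$ and $\bar\partial^*\alpha = 0$.

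The next step is to invoke the twisted Nakano identity $\Delta_{\bar\partial} = \Delta_\partial + \sqrt{-1}\,[\Theta(L,h),\Lambda]$, where $\Lambda$ is the formal adjoint of the Lefschetz operator $\omega\wedge(\cdot)$. Pairing with $\alpha$ in $L^2$ and integrating yields $0 = \|\partial\alpha\|^2 + \|\partial^*\alpha\|^2 + \int_X \langle \sqrt{-1}[\Theta(L,h),\Lambda]\alpha,\alpha\rangle\, dV$. The first two summands are manifestly nonnegative, so it suffices to show that the curvature term is strictly positive as soon as $p+q > n$ and $\alpha \not\equiv 0$.

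The heart of the argument is the algebraic fact that on a K\"ahler manifold with $\Theta(L,h)$ proportional to $\omega$, the operator $\sqrt{-1}[\Theta(L,h),\Lambda]$ acts on primitive $(p,q)$-forms as multiplication by the constant $(p+q-n)$, which is strictly positive precisely when $p+q>n$. Coupling this with the Lefschetz primitive decomposition of an arbitrary harmonic $(p,q)$-form on a compact K\"ahler manifold, together with the fact that harmonicity is preserved under the decomposition, forces $\alpha$ to vanish identically, which proves the theorem.

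The main obstacle is precisely this algebraic identity for the commutator $\sqrt{-1}[\Theta(L,h),\Lambda]$ on primitive $(p,q)$-forms: establishing the sign requires either the $\mathfrak{sl}_2$-representation theory generated by the Lefschetz operator and its adjoint acting on $\bigwedge^\bullet T^*X$, or a direct local computation in K\"ahler normal coordinates at a point. Everything else in the argument (existence of harmonic representatives, the Nakano identity itself, and the compatibility of the primitive decomposition with harmonicity) is standard once the K\"ahler data are fixed by the positivity of $L$.
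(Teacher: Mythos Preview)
The paper does not prove this theorem at all: it is the classical Akizuki--Kodaira--Nakano Vanishing Theorem, stated without proof as background and motivation for the tropical analogue (Theorem~\ref{mthm:AKN_trop}), with a reference to \cite{Voisin}. Your sketch is the standard Bochner--Kodaira--Nakano argument and is essentially correct.

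One small remark on your write-up: once you have chosen $\omega=\frac{\sqrt{-1}}{2\pi}\Theta(L,h)$, the commutator $\sqrt{-1}[\Theta(L,h),\Lambda]$ is simply $[L_\omega,\Lambda]$ (with $L_\omega=\omega\wedge\cdot$), and by the $\mathfrak{sl}_2$-relations this acts as multiplication by $(p+q-n)$ on \emph{all} $(p,q)$-forms, not just the primitive ones. So the detour through the primitive decomposition and its compatibility with harmonicity is unnecessary; the curvature term is $(p+q-n)\|\alpha\|^2$ directly, and the conclusion follows at once for $p+q>n$.
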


Theorem~\ref{thm:ks} then follows from the long exact sequence of Hodge groups and Serre duality, cf.~\cite{Voisin}. 

\begin{rem}
Notice that the logic of the first proof of the Kodaira--Spencer Lefschetz Theorem 
can also be reversed, and we can conclude Theorem~\ref{thm:clef} for complex coefficients from it. 
In other words, for smooth complex algebraic varieties, the Lefschetz theorem for Hodge groups is \emph{weaker} than the Lefschetz theorems of Lefschetz, Andreotti--Frankel and Bott--Milnor--Thom.
\end{rem}

\subsection*{The tropical case.} Contrary to the classical case, the analogous theorems for smooth tropical varieties are not as easily derived. Our Lefschetz Section Theorem for $(p,q)$-groups is stated as follows:

\begin{mthm}\label{mthm:proj_t_hodge_g}
Let $X$ denote an $n$-dimensional smooth tropical variety in $\mathbb{A}^d$, and let $H\subset \mathbb{A}^d$ denote an ample {{chamber complex}}. Then the inclusion $X\cap H\hookrightarrow X$ induces an isomorphism of $(p,q)$-homology
\[H_q(X\cap H; \FF_p (X \cap H))\ \longrightarrow\ H_q(X; \FF_p X) \]
for $p+q\le n-2$, and a surjection 
when $p+q=n-1$.
If $H$ has positive balancing (so that the stable intersection is well-defined), then we moreover have
\[H_q(X\cap_{\mathrm{stable}} H; \FF_p (X \cap_{\mathrm{stable}} H))\ \longrightarrow\ H_q(X; \FF_p X).\]
\end{mthm}

The analogous theorem fails for integral $(p,q)$-groups, see Section~\ref{ssc:intHodge}. For the proof of Theorem~\ref{mthm:proj_t_hodge_g}, 
given at the end of this section, we follow the classical, direct proof of the Kodaira--Spencer Lefschetz Section Theorem. 
That means, we first prove a tropical analogue
\ref{mthm:AKN_trop} of the Akizuki--Kodaira--Nakano Vanishing Theorem; the tropical Lefschetz Section Theorem for $(p,q)$-groups~\ref{mthm:proj_t_hodge_g} then swiftly follows.

\begin{mthm}\label{mthm:AKN_trop}
Let $X$ be an $n$-dimensional smooth tropical variety in $\mathbb{A}^d$, and let $P$ denote an ample pointed polyhedron  of codimension $k$ such that for every face $\sigma$ of $X$, $\mathrm{aff}(\sigma)$ is transversal to $\mathrm{aff}(P)$. Then
\[H_q(X\cap P, X\cap \partial_{|\mo} P; \FF_p X)=0\ \ \text{for all}\ \ p+q\le n-k-1.\]
\end{mthm}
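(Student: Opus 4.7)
The plan is to adapt the stratified Morse-theoretic proof of Lemma~\ref{mlem:lef_to_convex_cell} to tropical $(p,q)$-homology. I would begin by choosing a smooth auxiliary function $\wt f \colon P \to \R^{\ge 0}$ that vanishes precisely on $\bd_{|\mo} P$, is positive elsewhere, has strictly convex superlevel sets $\wt f^{-1}[t,\infty)$, and has gradient uniformly outwardly oriented at infinity.  After a generic small perturbation, the restriction $f \defeq \wt f_{|X\cap P}$ is a proper Morse function on $X \cap P$ with isolated critical points at distinct values.  By Lemma~\ref{lem:smt_observation_on_level sets}, the Morse data at each critical point $x \in \sigma^\circ$ factorizes as
\[
\bigl(\sigma, \bd\sigma\bigr)\,\times\,\bigl(\CO Z_\sigma,\, Z_\sigma\bigr), \qquad Z_\sigma \defeq \RN^1_\sigma(X\cap P)_{\le t}.
\]

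Smoothness of $X$ together with transversality of $P$ to $X$ ensures that $\Lk_\sigma(X\cap P)$ is, in a suitable integral affine chart, the link of a Bergman fan of some loopless matroid, and that the halfspace $\TT_x \wt f^{-1}(-\infty,t]$ cuts it in general position.  Lemma~\ref{mlem:positive-side-bergman} then identifies $Z_\sigma$ as homotopy Cohen--Macaulay of dimension $n - k - \dim\sigma - 1$, and Lemma~\ref{lem:join} shows that the Morse boundary $Z_\sigma \ast \bd\sigma$ is homotopy equivalent to a wedge of $(n-k-1)$-spheres.  Hence each Morse attachment is topologically a cell of dimension $n-k$, and for the ordinary-homology case $p = 0$ this already yields $H_q(X\cap P, X\cap \bd_{|\mo} P;\Z) = 0$ for $q < n-k$, proving the theorem with integer coefficients.

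The main obstacle is to sharpen this topological attachment to the $(p,q)$-homological vanishing in the full Lefschetz range $p + q < n - k$.  Since the coefficient system $\FF_p X$ is nonconstant along the half-star of $\sigma$, the relative cellular chain complex of a single Morse attachment is no longer concentrated in a single degree and the direct Morse counting used for $p = 0$ breaks down.  My approach is to reduce the vanishing to a local combinatorial computation on the positive side of a Bergman fan: first, apply the pushing lemma (Lemma~\ref{lem:push_it}) together with Corollary~\ref{lem:stable} to replace every relative $(p,q)$-cycle by a transversal representative; then use Lemma~\ref{lem:intersections} to model the combinatorial half-star of $\sigma$ as a product of Bergman fans; and finally exploit the matroid balancing property (Lemma~\ref{lem:balance}) together with the induced codifferentials on the $p$-groups along flags of flats to show that the local relative chain complex is acyclic in degrees $q < n - k - p$.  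The delicate point is the interplay between the cellular boundary and the $p$-group differentials, which requires the general-position condition on the separating halfspace to be carefully exploited.  Once the local vanishing is in hand, chaining the long exact sequences of relative $(p,q)$-homology along the critical values of $f$ completes the proof.
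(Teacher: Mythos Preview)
Your Morse-theoretic framework matches the paper's approach exactly, and you correctly identify that the case $p>0$ is where the real work lies.  However, the local step you sketch has a genuine gap.

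First, Lemma~\ref{lem:push_it} and Corollary~\ref{lem:stable} give only \emph{transversality}: they let you move a chain off low-dimensional strata, but they do not force the chain into a prescribed halfspace.  At a critical point you must push the chain from the superlevel side $X_{\le t}$ into the sublevel side $X_{\le t-\varepsilon}$, i.e.\ out of a specific half-star.  The paper singles this out as a separate and harder statement (the directional Pushing Lemma~\ref{mlem:push_it}), and explicitly contrasts it with the Mikhalkin--Zharkov lemma you invoke; the directional version even fails over~$\Z$ (Proposition~\ref{prp:pushing_direction}), so it cannot be a formal consequence of transversality.

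Second, your proposed mechanism for the local vanishing (``balancing plus induced codifferentials on $p$-groups along flags of flats'') does not match what is actually required.  The paper needs two independent local ingredients.  One is a framing lemma (Lemma~\ref{lem:framing_half}): every element of $\FF_p\B$ with $p<n$ can be rewritten as a combination of elements supported on the positive side $\B\cap H^+$; this is where balancing (Lemma~\ref{lem:balance}) is genuinely used, but only to anchor an induction on~$p$ at $p=1$.  The other is a $(p,q)$-analogue of the filtered-lattice theorem itself (Lemma~\ref{lem:intervals_pq}): the relative $(p,q)$-homology of the half-Bergman fan vanishes in the Lefschetz range.  Its proof is not a direct chain computation on $\B$ but a repetition of the Part~I strategy---establish the Boolean (permutahedral) case first, then descend to a general matroid via a Quillen-type deletion argument combined with a K\"unneth lemma (Lemma~\ref{lem:Kn}).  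Note also that Lemma~\ref{lem:intersections} models the \emph{full} link as a product of Bergman fans, not the half-star; the half-star is a Bergman fan cut by a halfspace and is not itself a Bergman fan, so the product structure alone does not give you the local acyclicity.

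In short: the skeleton of your argument is right, but the local vanishing is the whole theorem in miniature, and it requires the directional pushing lemma and the $(p,q)$-version of Theorem~\ref{mthm:pos_sum_geom_latt_cm}, neither of which follows from the tools you cite.
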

The proofs of Theorems ~\ref{mthm:proj_t_hodge_g} and~\ref{mthm:AKN_trop}
 will follow after a sequence of lemmas. 

\subsection*{Pushing Chains, smooth tropical halflinks and the tropical AKN Theorem~\ref{mthm:AKN_trop}.}
The idea for the proof is to {``push''} $(p,q)$-chains in $X\cap P$ towards $X\cap \partial_{|\mo} P$. This in particular gives us a procedural view on the deformation of chains, and quickly implies the tropical AKN Theorem~\ref{mthm:AKN_trop} as in Lemma~\ref{mlem:lef_to_convex_cell}. 
 
\begin{mlem}[Pushing chains]\label{mlem:push_it}
Consider an $n$-dimensional smooth tropical variety $X$ in $\mathbb{A}^d$. Let $c\in C_q(X;\FF_p X)$ denote a $(p,q)$-chain of $X$ such that for some vertex $v$ of $X$, ${c}_{|\St_v X}$ is supported in a tropical geometric half-star $\St_v \RS{X}{H^+}$ (where $H^+$ is a closed halfspace in general position). 

Assume that $\partial c$ is not supported in $v$. 
If additionally $p+q\le n-1$, then there is a $(p,q)$-chain $\wt{c}\in C_q(X;\FF_p X)$ that
\begin{compactenum}[\rm (1)]
\item is supported in $X {\setminus} \{v\}$, 
\item is isomorphic to $c$ outside of a neighborhood of $v$ in $X$, and such that
\item $(c-\wt{c})$ is the boundary of a $(p,q+1)$-chain of $X$ supported in $\St_v X$.
\end{compactenum}
\end{mlem}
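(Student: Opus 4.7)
The plan is to reformulate the conclusion as the vanishing of a relative $(p,q)$-homology group and then compute that group via the topological halflink result of Lemma~\ref{mlem:halfstar}. First I would observe that because $c|_{\St_v X}$ is supported in $\St_v \RS{X}{H^+}$ and $\partial c$ is not supported at $v$, its restriction defines a relative cycle in $C_q(\St_v \RS{X}{H^+}, \partial \St_v \RS{X}{H^+}; \FF_p X)$. Producing $\wt c$ supported away from $v$ together with a $(p,q{+}1)$-chain $e$ supported in $\St_v X$ satisfying $\partial e = c - \wt c$ is equivalent to this relative class being a boundary, so the lemma reduces to proving
\[
H_q(\St_v \RS{X}{H^+}, \partial \St_v \RS{X}{H^+}; \FF_p X) = 0 \quad \text{for } p+q<n.
\]

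Next, since $\St_v \RS{X}{H^+}$ is a cone with apex $v$ over $\partial \St_v \RS{X}{H^+}$, I would establish a cone-suspension isomorphism modeled on Lemma~\ref{lem:link}. Concretely, the join-with-$v$ construction should produce a chain map
\[
C_{q-1}(\partial \St_v \RS{X}{H^+}; \FF_p X) \longrightarrow C_q(\St_v \RS{X}{H^+}, \partial \St_v \RS{X}{H^+}; \FF_p X)
\]
that induces an isomorphism on $(p,q)$-homology. Invoking Corollary~\ref{lem:stable}, I would then replace the class of $c|_{\St_v X}$ by a transversal representative, so that all cells of positive dimension in the support sit in top-dimensional faces of $X$ and the local coefficients $\FF_p \tT_\sigma X$ embed canonically into the constant coefficient $\FF_p \tT_v X$.

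Finally, Lemma~\ref{mlem:halfstar} asserts that $\partial \St_v \RS{X}{H^+}$ is homotopy equivalent to a wedge of $(n-1)$-spheres, hence $(n-2)$-connected. Ordinary homology with constant $\FF_p \tT_v X$-coefficients therefore vanishes in degree $q-1$ whenever $q-1 < n-1$, i.e., whenever $q<n$. Since the hypothesis $p+q<n$ with $p\geq 0$ forces $q<n$, this produces the required vanishing of the relative homology group, and the pushing lemma follows.

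The main obstacle is the second paragraph, namely establishing the cone-suspension isomorphism in the presence of the non-constant coefficient system $\FF_p X$ and in a situation where Lemma~\ref{lem:link} does not apply verbatim (the apex $v$ lies on the boundary of the half-space rather than in the interior of a polytope). One must verify that joining with $v$ is a well-defined chain map compatible with the natural inclusions $\FF_p \tT_\sigma X \hookrightarrow \FF_p \tT_v X$ for $\sigma \in \St_v X$, and that after passing to transversal representatives it becomes a quasi-isomorphism. This requires the same bookkeeping as in the proof of Lemma~\ref{lem:link}, but additional care is needed because $X$ at $v$ models a Bergman fan, so different cells of $\St_v X$ may carry genuinely different $p$-groups that only become comparable after restricting to top-dimensional strata.
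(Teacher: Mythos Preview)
Your reduction to a relative vanishing statement is correct, and so is the cone--suspension isomorphism (this is precisely the content of Lemma~\ref{lem:link}, and it does extend to the half-star situation). The gap is in the passage from the local coefficient system $\FF_p X$ to the constant group $\FF_p\,\tT_v X$.

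First, transversality in the sense of Corollary~\ref{lem:stable} pushes cells of a chain in $X$ into top-dimensional faces of $X$. But any chain in $\partial\St_v\RS{X}{H^+}$ lives in an $(n-1)$-dimensional subcomplex of $X$; its cells can never be moved into the $n$-dimensional strata of $X$. Even if you make the chain transversal \emph{within} the halflink, on a top face $\tau'$ of the halflink the coefficient group $(\FF_p X)_{|\tau'}=\FF_p\,\tT_{\tau'}X$ is still the $p$-group of a nontrivial fan, not $\bigwedge^p$ of a single linear space. The coefficient system therefore remains genuinely non-constant, and a bounding chain produced with constant $\FF_p\,\tT_v X$-coefficients has no reason to have its coefficients land in the correct local subgroups.

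Second, and decisively: your argument uses only the $(n-2)$-connectedness of the halflink and would therefore prove vanishing for all $q<n$, with no constraint on $p$. But the discussion in Section~\ref{ssc:rem2} around Figure~\ref{fig:nontrivial} exhibits an explicit nontrivial $(1,0)$-class in the halflink of the Bergman fan of $U^3_4$, i.e.\ nonvanishing at $p=1$, $q=1$, $n=2$. So the topological connectivity of Lemma~\ref{mlem:halfstar} simply does not control $(p,q)$-homology; the hypothesis must genuinely involve $p+q$, not $q$ alone.

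The paper's proof instead establishes the required $(p,q)$-vanishing (Lemma~\ref{lem:intervals_pq}) from scratch, essentially rerunning the argument of Theorem~\ref{thm:filtgl} with $\FF_p$-coefficients: first for the permutahedral fan, then via a K\"unneth-type lemma and a fiber argument that removes the non-flat rays one by one. A separate ingredient, Lemma~\ref{lem:framing_half}, shows that the framing itself can be pushed into $H^+$; this is where the balancing property of Bergman fans enters and where the bound $p<n$ is actually used. Finally, vertices of positive sedentarity require an additional induction (decomposing $c$ into ``mobile'' and ``sedentary'' parts) that your outline does not address.
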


This lemma generalizes the corresponding Corollary~\ref{lem:stable} of Mikhalkin--Zharkov, where chains are deformed using a similar principle, but without forcing them into a certain direction. As we will see, this last aspect works only over the reals, compare also Proposition~\ref{prp:pushing_direction}.

The main ingredient of this ``Pushing Lemma'' will be a version of Lemma~\ref{mlem:positive-side-bergman}, which we reprove for $(p,q)$-groups, and the following lemma which establishes that framings (i.e.,\ coefficients of a $(p,q)$-chain) can be pushed past critical strata. For this, we prove a Lefschetz theorem for $\FF_p$-groups, similar to Theorem~\ref{mthm:pos_sum_geom_latt_cm}. For a Bergman fan $\B$ associated to a geometric lattice $\LL$, and $\om$ a weight on the atoms of $\LL$, let $\B^{> t}$ denote the restriction of the Bergman fan to the chains of flats with weight $>t$.

\begin{lem}\label{lem:framing_half}
Consider a Bergman fan $\B$ of dimension $n$ in $\R^d$ and let $H^+$ denote a closed general position halfspace whose boundary contains the origin.

 Then $(\FF_p \B)_{|\mathbf{0}}$ is generated by $(\FF_p (\B\cap H^+))_{|\mathbf{0}}$ for $p\le n-1$.\end{lem}

We shall prove this fact in higher generality and in three steps. Recall that a polyhedral complex is pure if all its facets are of the same dimension.

We call a polyhedral fan $\Fan$ \emph{nowhere acyclic} if it is pure and, for any point $p$ of $\Fan$, the tangent fan to $\Fan$ in $p$ does not lie in a closed halfspace with linear boundary unless it lies within that boundary. 

We shall need three lemmata. We start with the simplest:

\begin{lem}
Bergman fans are nowhere acyclic.
\end{lem}

\begin{proof}
This is immediate from Lemma~\ref{lem:balance}: If $p$ lies in the interior of a facet of $\B$, then the tangent fan is a subspace. If $p$ does not lie in the relative interior of a facet, then it lies in a face of codimension one, where the desired follows from the balancing property.
\end{proof}

The second lemma is more challenging.

\begin{lem}\label{lem:framing_halff}
Consider a nowhere acyclic fan $\Fan$ of dimension $n\ge 2$ in $\R^d$ and let $H^+$ denote a closed general position halfspace whose boundary contains the origin. 

Then $(\FF_1 \Fan)_{|\mathbf{0}}$ is generated by $(\FF_1 (\Fan\cap H^+))_{|\mathbf{0}}$.\end{lem}

\begin{proof}
We may assume, without loss of generality, that $(\FF_1 \Fan)_{|\mathbf{0}} \cong \R^d$. It remains to prove that $(\FF_1 (\Fan \cap H^+))_{|\mathbf{0}}$ spans $\R^d$. As $H^+$ is assumed to be in general position (no ray of $\Fan$ lies in its boundary) the conclusion of the lemma is invariant under small perturbations, so we may assume that the position of $H^+$ is sufficiently generic. 

A \emph{subhalfspace} is the intersection of a (linear) hyperplane with a closed halfspace whose boundary is a linear hyperplane transversal to~the first hyperplane. A subhalfspace $g$ is \emph{in the same pencil} as a sub\-halfspace $h$ if \[h\cap g=\partial h=\partial g.\]

We need to prove that $\Fan \cap H^+$ is not contained in a subhalfspace $h^+\subsetneq H^+$. Assume the contrary, i.e.\
assume that 
\begin{equation}
\label{eq:ass}
\text{$\Fan \cap H^+$ is contained in $h^+\subsetneq H^+$}.
\end{equation}
For subhalfspaces $g$ in the same pencil as $h^+$, define $\alpha(g)$ as the angular distance of $g$ to $h^+$, see Figure~\ref{fig:degeneration}. 
\begin{figure}[htb] 
\centering 
 \includegraphics[width=0.45\linewidth]{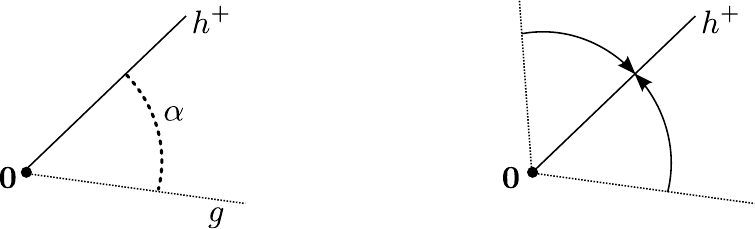} 
 \caption{\small For a geometric halflink, we study the angle to a subhalfspace $h^+$.} 
 \label{fig:degeneration}
\end{figure}

\noindent Set \[\alpha_0\ \defeq \ \sup\{t: \alpha^{-1} [0,t] \cap \Fan\ =\ h^+\cap \Fan\ =\ H^+\cap \Fan\}.\]
Then we have $\alpha_0>0$ (since we assumed $h^+\cap \Fan = H^+\cap \Fan$), and 
since $\Fan$ spans $\R^d$ we also have $\alpha_0<\pi$.

\begin{figure}[htb] 
\centering 
 \includegraphics[width=0.45\linewidth]{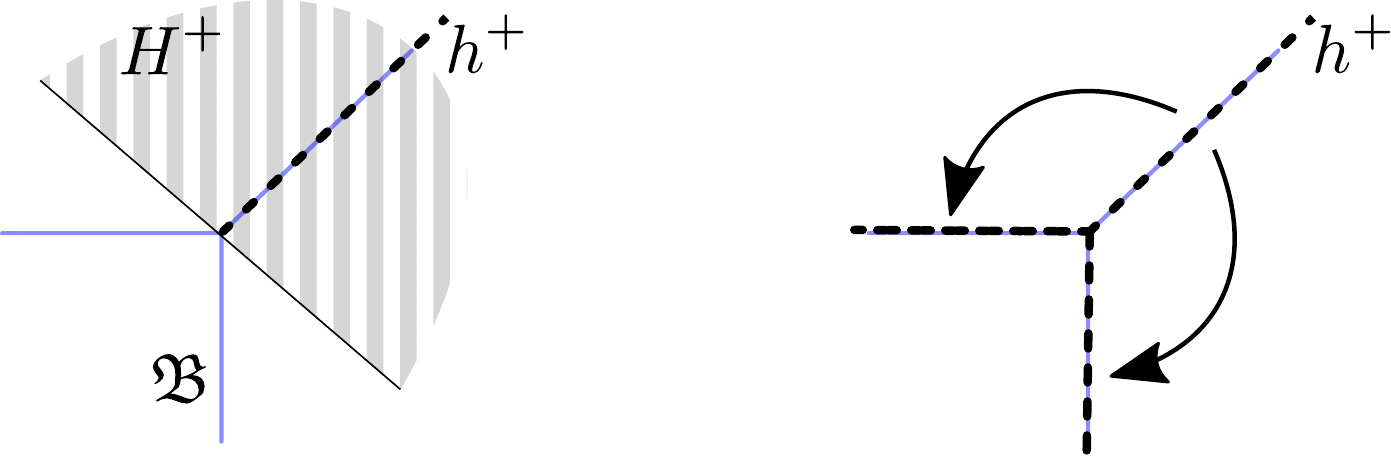} 
 \caption{\small If $\Fan$ spans the embedding vector space, but $\Fan \cap H^+$ lies in a subhalfspace $h^+$, then we may move through subhalfspaces with increasing angle to $h^+$ until, at some angle strictly between $0$ and ${\pi}$, the subhalfspace intersects $\Fan$ in a new point for the first time.} 
 \label{fig:monodromy}
\end{figure}

Since $\alpha_0$ is chosen as supremum, one of the two subhalfspaces $g_0$ at distance $\alpha_0$ from $h^+$ contains a point $p$ of $\Fan$ in its interior. By genericity of $H^+$ no facet of $\Fan$ intersects $g_0$ in an $n$-dimensional set.

\begin{figure}[htb] 
\centering 
 \includegraphics[width=0.35\linewidth]{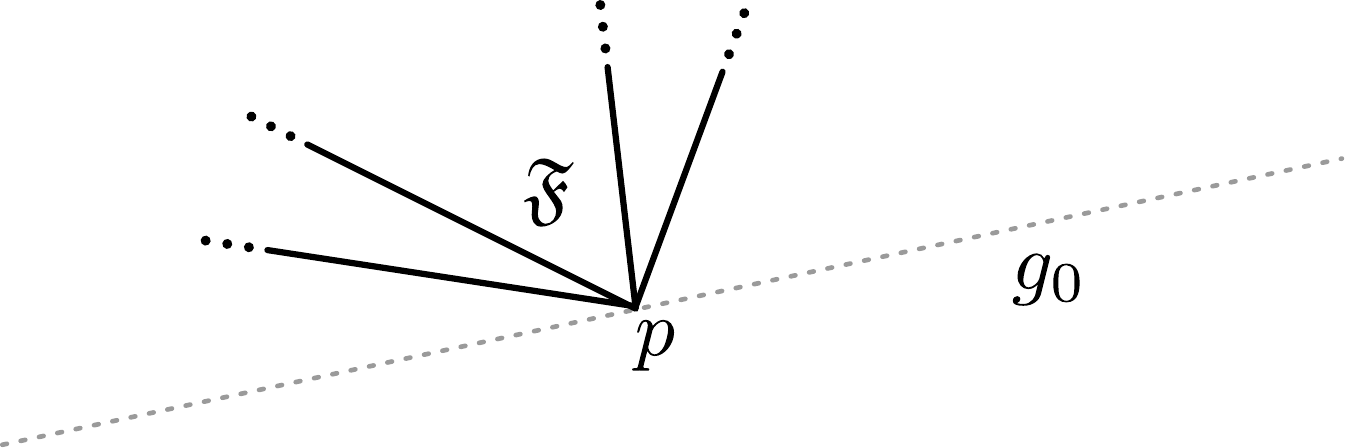} 
 \caption{\small Lemma~\ref{lem:framing_half}, $p=1\le n-1$, follows from the balancing property of Bergman fans.}
 \label{fig:push_framing_local}
\end{figure}

By construction, \[\alpha^{-1}{[0,\alpha_0)} \cap \Fan \ =\ h^+\cap \Fan\ =\ H^+\cap \Fan.\] Hence all cones of $\Fan$ incident to $p$ lie in a closed halfspace with boundary point $p$ (compare Figure~\ref{fig:push_framing_local}). In particular, at least one of these cones, specifically any facet of $\Fan$ incident to~$p$, intersects the interior of said halfspace, as none of the $n$-dimensional cones intersects the boundary of that halfspace in an $n$-dimensional set. But this contradicts the nowhere acyclic property of~$\Fan$.
\end{proof}

Finally, we have the following lemma, which together with the two preceding ones gives Lemma~\ref{lem:framing_half}.

\begin{lem}\label{lem:framing_halfff}
Consider an nowhere acyclic fan $\Fan$ of dimension $n\ge 2$ in $\R^d$ and let $H^+$ denote a closed general position halfspace whose boundary contains the origin. 

Then $(\FF_p \Fan)_{|\mathbf{0}}$ is generated by $(\FF_p (\Fan\cap H^+))_{|\mathbf{0}}$ for all $p\le n-1$.\end{lem}

\begin{proof}
We argue by induction on $p$. Note for this that a nowhere acyclic fan has the property that the normal fan at every face is nowhere acyclic.
We already proved the case $p=1$, so we may assume that $p>1$.
Let $f$ denote any generic continuous function $\R^d\setminus \{\mathbf{0}\} \rightarrow \R$ whose superlevel sets $f^{-1}[t,\infty)\cup \{\mathbf{0}\}$, $t> 0$, are convex cones pointed at the origin and are otherwise smooth.
Assume further that $f$ is chosen so that $f^{-1}[0,\infty)=\overline{H{\setminus} H^+}$. Specifically, we can simply choose $f$ to be given by the angular distance to the interior normal of $H^+$, minus $\frac{\pi}{2}$.

For the proof, we consider a representative $\gamma$ of an element in $(\FF_p \Fan)_{|\mathbf{0}}$. We may assume that $\gamma$ is a weighted sum of $p$-dimensional cones of $\Fan$, that is, 
\begin{equation}\label{eq:red}
\gamma\ =\ \sum_{\sigma \text{ $p$-face of $\Fan$}} a_\sigma \sigma
\end{equation}
where $a_\sigma\in \R$ and we identify $\sigma$ with the exterior product of vertices along its rays. We say that $\gamma$ is \emph{$t$-reduced}, and write $\gamma_{\le t}$, if every $p$-dimensional face occuring on the right hand side of \eqref{eq:red} with a non-zero coefficient has a ray $\rho$ with $f(\rho)\le t$.

Our goal is to prove that
\[\gamma\ =\ \gamma_{\le 0}\ \ \text{in}\ \ \bigwedge^p \R^d,\]
that is, as we decrease $t$, we push $\gamma$ to an equivalent element that is $0$-reduced. As we shall see, this in particular implies that $\gamma$ is generated by $(\FF_p (\Fan\cap H^+))_{|\mathbf{0}}$.

\begin{figure}[htb] 
\centering 
 \includegraphics[width=0.8\linewidth]{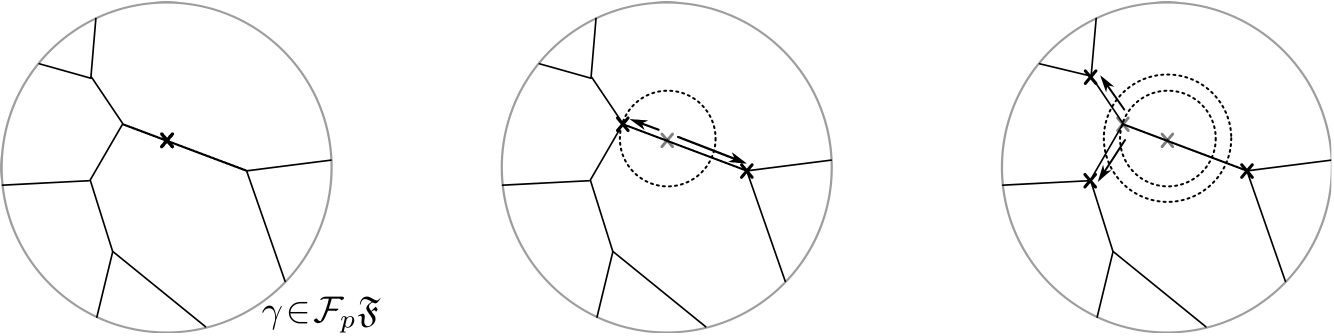} 
 \caption{\small We push the generators of a framing $\gamma$ to an equivalent framing on the positive side of a Bergman fan along a Morse function.}
 \label{fig:push_coefficients}
\end{figure}

For the former, we see that we have to be careful to analyze the points in time when the level sets of $f$ pass a ray of $\Fan$, say at time $t_0$. At such a point, one of the summands of $\gamma$ may see all its rays in $f^{-1}[t_0,\infty)$. But then we can represent that summand by elements in $f^{-1}(-\infty,t_0]$ by induction on $p$. Repeating this as $f$ passes the rays of $\Fan$ yields the desired representation $\gamma_{\le 0}$ of $\gamma$.

 Finally, it is clear that $\gamma_{\le 0}$ can be presented using $(\FF_p (\Fan\cap H^+))_{|\mathbf{0}}$, as by genericity of $H^+$ each of the summands has a vertex in the interior of $H^+$.
\end{proof}

We now prove a $(p,q)$-analogue of Theorem~\ref{thm:filtgl}.

\begin{lem}\label{lem:intervals_pq}
Let $\LL$ denote the lattice of flats of a matroid on the ground set $[m]$ and of rank $r\ge 2$, let $\B$ denote its Bergman fan, and let $\omega$ denote any generic weight on its atoms. Let $t$ denote any real number with $t\le \min\{0,\omega\cdot [m]\}$. Then, for any polytope $P$ containing the origin in its interior, we have 
\[H_q(\B^{>t}\cap P, \B^{>t}\cap \partial P;\FF_p \B)\ =\ 0 \text{ for }p+q\le r-2.\]
\end{lem}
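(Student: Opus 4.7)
The proof will adapt the Morse-theoretic strategy of Lemma~\ref{mlem:lef_to_convex_cell} to tropical $(p,q)$-homology, with the Pushing Chains Lemma~\ref{mlem:push_it} replacing the elementary stratified Morse step used there. The plan is to equip $P$ with a smooth function $\wt{f}\colon P \to \R^{\geq 0}$ of the same kind as in the proof of Lemma~\ref{mlem:lef_to_convex_cell}: smooth on strata, with strictly convex superlevel sets $\wt{f}^{-1}[s,\infty)$ for $s>0$, with $\wt{f}^{-1}\{0\} = \partial_{|\mo} P$, and with uniformly outwardly oriented gradient. After a generic perturbation, the restriction $f$ of $\wt{f}$ to $\B^{>t} \cap P$ is a proper Morse function with finitely many distinct critical points $x_1,\dots,x_N$, each in the relative interior of a unique cell $\sigma_i$ of $\B^{>t}$, and at each $x_i$ the local sublevel set is modelled by a halfspace in $\TT_{\sigma_i} \B^{>t}$ by the convexity of the superlevel sets.

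Given a relative $(p,q)$-cycle $c \in C_q(\B^{>t} \cap P;\FF_p \B)$ with $p+q \leq r-2$, I would deform $c$ toward $\B^{>t}\cap \partial P$ one critical point at a time, in decreasing order of $f$-value. After a stellar subdivision turning $x_i$ into a vertex $v_i$, Lemma~\ref{lem:intersections} identifies the local tropical structure at $v_i$ with the Bergman fan of a parallel-connection matroid of dimension $r-1$, so that $\B^{>t}$ is locally a smooth tropical variety near $v_i$. Because $\wt{f}$ has convex superlevel sets, the local support of $c$ in the sublevel set near $v_i$ sits inside the geometric half-star $\St_{v_i} \RS{\B^{>t}}{H_i^+}$ determined by $-\nabla \wt{f}(x_i)$. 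The condition $p+q \leq r-2 < r-1 = \dim(\B^{>t})$ matches exactly the dimension hypothesis of the Pushing Chains Lemma~\ref{mlem:push_it}, which then supplies an equivalent cycle supported in $(\B^{>t}\cap P)\setminus\{v_i\}$, the difference being the boundary of a $(p,q+1)$-chain supported in $\St_{v_i}\B^{>t}$.

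Iterating over the $N$ critical vertices reduces $c$ to a cycle supported in an arbitrarily small neighborhood of $\B^{>t}\cap\partial P$, which deformation retracts onto $\B^{>t}\cap\partial P$ itself by Proposition~\ref{prp:restriction_gives_def_retract}; hence $[c]=0$ in the relative $(p,q)$-homology. The hard part will be the bookkeeping of $\FF_p$-framings at each pushing step, which is precisely the content of Lemma~\ref{lem:framing_half}: every element of $\FF_p$ of the local Bergman fan must be expressible as a linear combination of $\FF_p$-elements coming from the halfspace side, and this is where the dimensional constraint on $p$ (and hence on $p+q$ via $q\geq 0$) is used. Checking that the local Morse-theoretic halfspace at each critical point meets the genericity hypothesis needed for Lemmas~\ref{lem:intersections} and~\ref{lem:framing_half} to interact compatibly is the main obstacle; modulo this verification, the argument is a direct translation of the proof of Lemma~\ref{mlem:lef_to_convex_cell} into the Hodge-theoretic language.
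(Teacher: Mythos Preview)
Your argument is circular. In the paper, Lemma~\ref{mlem:push_it} is \emph{proved using} Lemma~\ref{lem:intervals_pq}: the mobile case of the Pushing Lemma reads ``By Lemma~\ref{lem:intervals_pq}, the relative $(p,q)$-groups $H_q(\St_v X,\partial\St_v X;\FF_p X)$ vanish provided $p+q<n$'', so you cannot invoke it here. Even if you try to rebuild the pushing step from scratch, the obstacle is intrinsic: $\B^{>t}$ is a fan with apex $\mbf{0}$, so $\mbf{0}$ is always a critical point of your Morse function, and the normal Morse data there is $\B^{>t}$ intersected with a generic halfspace --- exactly the object whose $(p,q)$-acyclicity you are trying to establish. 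Your claim that ``$\B^{>t}$ is locally a smooth tropical variety near $v_i$'' is false at the origin (and, by Lemma~\ref{lem:intervals}, also false at higher cones, where the local structure is a product of \emph{filtered} Bergman fans, not full ones), so Lemma~\ref{lem:intersections} does not give you what you need.

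The paper avoids this trap by \emph{not} doing Morse theory on $\B^{>t}\cap P$. Instead it passes via Lemma~\ref{lem:link} to the punctured fan $\B^{>t}\setminus\{\mbf{0}\}$, establishes the vanishing first for the permutahedral fan $\PF^{>t}$ (where Theorem~\ref{thm:bc} gives a ball or sphere, so the coefficient system is irrelevant), and then descends from $\PF^{>t}$ to $\B^{>t}$ by deleting the extraneous rays one at a time in decreasing rank, exactly parallel to the Quillen-type argument in Theorem~\ref{thm:filtgl}. The link of each deleted ray is a join of strictly smaller filtered Bergman fans, handled by induction on rank together with a K\"unneth lemma for $(p,q)$-groups. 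This sidesteps the origin entirely: no step ever requires analyzing the full tangent fan at~$\mbf{0}$.
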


Here, to define $\B^{> t}\defeq \zeta(\Delta(\LL^{> t}))$ we use the natural combinatorial isomorphism \[\Delta(\LL)\ \xrightarrow{\ \zeta\ }\  \B/\{\varnothing\}\] that sends a chain $\mbf{C}$ in $\LL$ to the cone $\pos(\mbf{C})$. 

\begin{proof}
By Lemma~\ref{lem:link}, it suffices to prove that $H_q(\B^{>t}{\setminus} \{\mbf{0}\};\FF_p \B)=0$ for $p+q\le r-3$.

Consider first the permutahedral fan $\PF$ (i.e.\ the Bergman fan over the Boolean lattice $\BB=\BB[m]$): By Theorem~\ref{thm:bc}, the associated part of the Bergman fan $\PF^{>t}$ is the cone over an $(m-2)$-dimensional Cohen--Macaulay complex in $\R^{m-1}$. Therefore, we see that the $(p,q)$-groups $H_q(\PF^{>t}{\setminus} \{\mbf{0}\};\FF_p \PF^{>t})$ vanish for $p+q\le m-3\ge r-3$.

We canonically extend $\zeta$ to a map $\Delta(\BB)\longrightarrow \PF$. In abuse of notation, we further extend $\zeta$ to a map of posets by defining $\zeta(\mc{P})\defeq \zeta(\Delta(\mc{P}))$ for a poset $\mc{P}\subset \BB$. 
Since
\[H_q(\PF^{>t}{\setminus} \{\mbf{0}\};\FF_p \PF^{>t})\ \cong\ H_q(\PF^{>t}{\setminus} \{\mbf{0}\};\FF_p \zeta (\BB^{>t}\cup \LL)),\] the latter vanish as well under the same conditions.



Now, we argue as in Theorem~\ref{thm:filtgl} and Lemma~\ref{mlem:quillen}: To transition from $\PF^{>t}$ (which we already understand) to $\B^{>t}$,
we remove the rays of $\PF^{>t}$ not in $\B^{>t}$ one by one according to decreasing rank.

Consider an intermediate fan $\B^{>t}\subset\mathfrak{I}\subset \PF^{>t}$, and ${\rho}$ a ray in $\mathfrak{I}- \B^{>t}$ corresponding to a maximal element $\mu\defeq \zeta^{-1}({\rho})$ in the intermediate poset $\mc{I}\defeq \zeta^{-1}(\mathfrak{I}-\B^{>t}) \subset \BB$. By Lemma~\ref{lem:Bergmanp}, the normal fan of a ray in a Bergman fan is again a Bergman fan, so we can use induction on the rank and obtain 
\[H_{q-1}(\mathfrak{I}{\setminus} (\zeta(\mc{I}{\setminus}\{\mu\})\cup {\rho}); \FF_p \zeta(\mc{I} \cup \LL))\ =\ 0\]
for all $p+q\le r-2$. Hence, Lemma~\ref{lem:intervals_pq} follows with Lemma~\ref{lem:link} and by a second induction on the number of elements in $\BB^{>t}{\setminus} \LL^{>t}$.
\end{proof}

\begin{rem}
Instead using the fact that Bergman fans locally look like Bergman fans, we could have worked using a more general principle, based on the following analogue of Lemma~\ref{lem:join}.
\begin{prp}\label{lem:Kn}
Let $X_1, X_2$ denote two fans, and assume that the $(p,q)$-groups of $X_i{\setminus} \{\mbf{0}\}$ vanish for $p+q\le a_i$ $(i=1,2)$.
Then the $(p,q)$-homology of $(X_1\times X_2){\setminus} \{\mbf{0}\}$ vanishes for $p+q\le a_1+a_2+2$.
\end{prp}

For the proof, observe that $(X_1\times X_2){\setminus} \{\mbf{0}\}$ is obtained as the join of $X_1{\setminus} \{\mbf{0}\}$ and $X_2{\setminus} \{\mbf{0}\}$. Following the proof of Lemma~\ref{lem:join}, it remains to prove a K\"unneth theorem for tropical homology, which states that for tropical varieties $X,\, Y$ in euclidean vectorspaces, we have
\[H_q(X\times Y;\FF_p (X\times Y))\ \cong\ \bigoplus_{\substack{q_X+q_Y=q \\ p_X+p_Y=p}} H_{q_X}(X;\FF_{p_X} X) \otimes
H_{q_Y}(Y;\FF_{p_Y} Y).\]
This in turn is proven as the classical K\"unneth theorem, by first restricting to the case 
\[H_q(X\times (P,\partial P);\FF_p (X\times Y))\]
for a polyhedron $P$ and then building a homology theory to compute $H_q(X\times Y;\FF_p (X\times Y))$.
\end{rem}

\begin{proof}[\textbf{Proof of Lemma~\ref{mlem:push_it}.}] First observe that by Lemma~\ref{lem:framing_half}, the framing can be pushed to the halfspace $H^+$. 
The case of mobile critical points $v$ follows swiftly: By Lemma~\ref{lem:intervals_pq}, the relative $(p,q)$-groups $H_q(\St_v X, \partial \St_v X; \FF_p X)$ vanish,
 provided that $p+q\le n-1$.

It remains only to discuss the case where $v$ is of nontrivial sedentarity, for which we can assume that $X\subset\mathbb{T}^d$. By induction on the sedentarity, we can assume that the claim is proven for sedentarity 
$J\subsetneq I \defeq  \Sed(v)$.

If we consider any face $\tau$ in $\St_v X$; then the inclusion 
\[\wt{\tau} \defeq  \tau\cap (\TR^{[d]{\setminus} I}\times \{-\infty\}^{I})\hookrightarrow \tau\] induces a projection map of local coefficients $(\FF_p X)_{|\tau}\twoheadrightarrow (\FF_p X)_{|\wt{\tau}}\hookrightarrow (\FF_p X)_{|\tau}$ and therefore by linear extension an endomorphism $\mr{p}_I : (\FF_p X)_{|\St_v X}\rightarrow (\FF_p X)_{|\St_v X}$. Using this, we decompose
\[c_{|\St_v X}\ =\ \underbrace{c_{|\St_v X}\ -\ \mr{p}_I (c_{|\St_v X})}_{\defeq \, c_{\mr{sed}}}\ +\ \underbrace{\mr{p}_I (c_{|\St_v X})}_{\defeq \, c_{\mr{mob}}}.
\]
Both chains have the property that their boundaries do not intersect $v$ because $\mr{p}_I$ and $\partial$ commute, and hence they satisfy the assumptions of the lemma. 

\begin{figure}[htb] 
\centering 
 \includegraphics[width=0.98\linewidth]{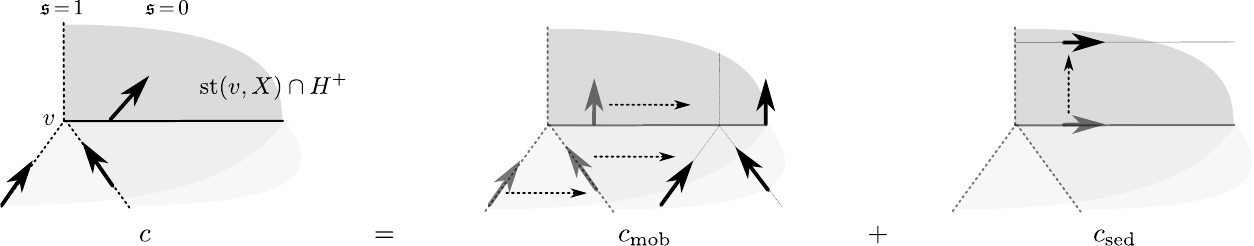} 
 \caption{\small At a face of positive sedentarity, we decompose the chain $c$ into two summands $c_{\mr{sed}}$ and $c_{\mr{mob}}$; the latter is ``mobile'' in the sense that it can be pushed homologously to faces of strictly smaller sedentarity. The same does not apply to $c_{\mr{sed}}$, which can instead be pushed into the interior of $H^+$.} 
 \label{fig:pushing}
\end{figure}

Now, $c_{\mr{sed}}$ can be deformed to a homologous\footnote{Two chains are homologous if their difference is a boundary.} chain within the intersection of $\St_v X$ and the interior of ${H^+}$ in a neighborhood of $v$, using the same arguments as for the sedentarity zero case. 
The chain $c_{\mr{mob}}$ on the other hand can be deformed by homologously pushing it into a coface of $v$ with strictly smaller sedentarity, and then invoking the induction assumption on the sedentarity.
\end{proof}

\begin{proof}[\textbf{Proof of Theorem~\ref{mthm:AKN_trop}}]
As in the proof of Lemma~\ref{mlem:lef_to_convex_cell} (whose notation we adopt for simplicity), we consider
a function $\wt{f}:{P} \longrightarrow \RR^{\ge 0}$ with convex superlevel sets that restricts to a Morse function $f$ on $X\cap P$. Thus, 
we are left to analyse the Morse data at critical loci. Unfortunately, unlike in the mobile situation, we can no longer restrict to the mobile part of a tropical variety as $(p,q)$-groups on $X$ and $X_{|\mo}$ can differ. Hence, we modify our Morse function and define on $P_{|\mo}$
\[\wt{f}(x)\ \defeq\ \prod_{H} \arctan^{\alpha_H}(\mr{d}(H,x))\]
for generic choices of positive reals $\alpha_H$. This can be extended smoothly to the closure $P$ of $P_{|\mo}$, in the sense that the function is smooth when restricted to the subspace of points of a fixed fine sedentarity (intersected with the interior of $P$). Additionally, the function also has strictly convex superlevel sets when restricted to these subspaces, and the level set at $0$ is $\partial P$. The downside is that not all critical points of this function are in $\mathbb{R}^d$.

Geometrically, the Morse data at critical points are but products of Bergman fans and polyhedra of the appropriate dimension. 
Specifically, consider a critical value $t$, some $\varepsilon>0$ small enough, and the inclusion
\[j_{t}:\ X_{\le t-\varepsilon}\ \longhookrightarrow\ X_{\le t},\] where $X_{\le  s}\defeq X\cap f^{-1}(-\infty,s]$ as usual. If the corresponding critical point lies in the relative interior of a face $\sigma$, the Morse data is $(\CO ( \RN^1_\sigma X_{\le t}\ast \partial \sigma), \RN^1_\sigma X_{\le t}\ast \partial \sigma)$.

Hence,
by Lemma~\ref{mlem:push_it} and Lemma~\ref{lem:Kn}, we can deform any $(p,q)$-chain $c$ in $C_q(X_{\le t};\FF_p X)$ with $\supp\, \partial c \in X_{\le t-\varepsilon}$ and $p+q\le n-k-1$ homologously to a chain $\widetilde{c}$ supported in $X_{\le t-\varepsilon}$ with $\partial \widetilde{c}=\partial c$.
In particular, every cycle $z$ in $Z_q(X_{\le t};\FF_p X)$ can be homologously deformed to a cycle $\widetilde{z}$ in $Z_q(X_{\le t-\varepsilon};\FF_p X)$, provided that $p+q\le n-k-1$. Hence, the induced map 
\[j_{t}^\ast:\ H_q(X_{\le t-\varepsilon};\FF_p X)\ \longrightarrow\ H_q(X_{\le t};\FF_p X)\]
is surjective up to dimension $p+q\le n-k-1$. 

Moreover, if $z$ is a boundary, then  $\widetilde{z}$ is the boundary of a chain \[\widetilde{c}\in C_{q+1}(X_{\le t};\FF_p X),\ \ \supp\, \partial \widetilde{c}=\supp\, \widetilde{z} \subset X_{\le t-\varepsilon},\] and we homologously deform $\widetilde{c}$ to be supported in $X_{\le t-\varepsilon}$ if $p+q\le n-k-2$, so that $j_{t}^\ast$ is also injective for these parameters.
Iterated application of this argument at the (discrete) set of critical loci gives the desired result.
\end{proof}

We now turn to the proof of the tropical Kodaira--Spencer Theorem~\ref{mthm:proj_t_hodge_g}.
To start with, we push chains in $X$ to chains in $X\cap H$, as guaranteed by Theorem~\ref{mthm:AKN_trop}, obtaining a chain in $C_q(X\cap H; \FF_p X)$. However, it is not clear in general how to pass from $C_q(X\cap H; \FF_p X)$ to $C_q(X\cap H; \FF_p (X\cap H))$, cf.\ Figure~\ref{fig:ptp}. Hence, Theorem~\ref{mthm:proj_t_hodge_g} is not a direct consequence of Theorem~\ref{mthm:AKN_trop}, and an 
additional argument needs to be made.

\begin{figure}[htb] 
\centering 
 \includegraphics[width=0.8\linewidth]{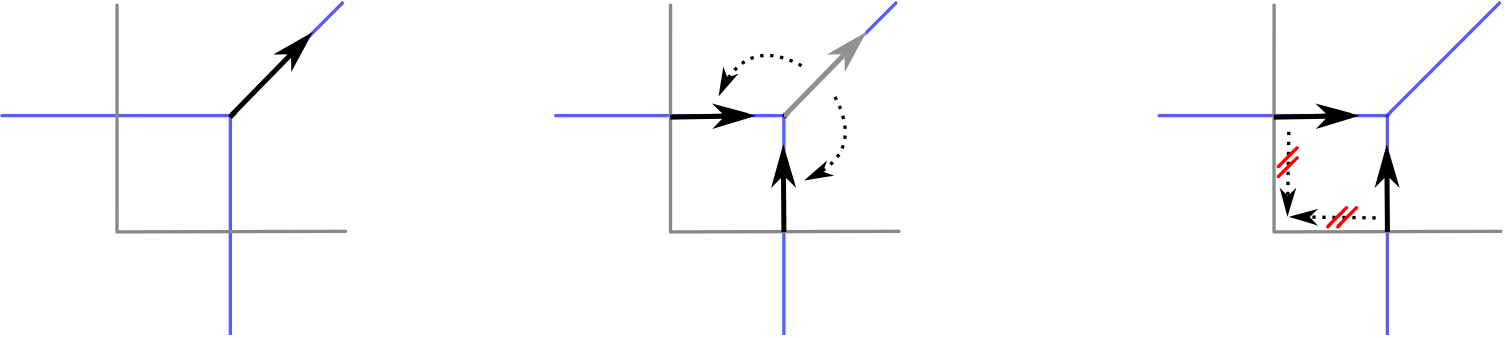} 
 \caption{\small A one-dimensional smooth tropical variety $X$, part of a chamber complex $H$, and a $(1,0)$ chain $c$ in it. While $c$ can be pushed to $X\cap H$, it can not be pushed so that its framing is supported in $\FF_p (X\cap H)$.}
 \label{fig:ptp}
\end{figure}

\begin{proof}[\textbf{Proof of the tropical Kodaira--Spencer Theorem~\ref{mthm:proj_t_hodge_g}}.]
We assume that $p\ge 1$, noting that the case when $p=0$ was dealt with already (since $\FF_0\equiv\mbb{R}$). Moreover, we work with the natural coarsest cell structure on the chamber complex $H$. As the case of stable intersections is more interesting, we focus on that case in detail here, and leave the details of the nonstable case to the reader.

As the case of stable intersections is more interesting, we focus on that case in detail here.

 We divide the proof into two parts by showing that the maps 
\begin{equation*}
H_q(X\cap_{\mathrm{stable}} H;\FF_p X)\longrightarrow H_q(X;\FF_p X) 
\end{equation*}
and 
\begin{equation*}
H_q(X\cap_{\mathrm{stable}} H;\FF_p (X\cap_{\mathrm{stable}} H))\longrightarrow H_q(X\cap_{\mathrm{stable}} H;\FF_p X)
\end{equation*}
induced by inclusion are isomorphisms for $p+q\le n-2$, and onto for $p+q\le n-1$. For this, we establish two claims: 
\begin{compactitem}[$\bullet$]
\item every relative cycle $c\in Z_q(X,X\cap_{\mathrm{stable}} H;\FF_p X)$ is homologous to a chain $\wt{c}\in C_q(X\cap_{\mathrm{stable}} H;\FF_p X)$, and
\item every chain $c\in C_q(X\cap_{\mathrm{stable}} H;\FF_p X)$ with boundary in $Z_{q-1}(X\cap_{\mathrm{stable}} H;\FF_p (X\cap_{\mathrm{stable}} H))$ is homologous  to a chain $\wt{c}\in C_q(X\cap_{\mathrm{stable}} H;\FF_p (X\cap_{\mathrm{stable}} H))$,
\end{compactitem}
as long as $p+q\le n-1$. 

Provided $H$ is in general position and transversal to $X$, the first claim is immediate from Theorem~\ref{mthm:AKN_trop}, since $H$ divides $\mathbb{A}^d$ into pointed polyhedra. For the case of non-general position, we argue by approximation as in Section~\ref{sec:stable}.

Now for the second claim: Every chain $c\in C_q(X\cap_{\mathrm{stable}} H;\FF_p X)$ is a limit of chains $c\in C_q(X\cap (H+tv);\FF_p X)$ for generic $v$ and small enough $t$, so that the homologous deformation carries over provided it works in general position, which we can assume from now. So for the conclusion of the proof, we may 
by Lemma~\ref{lem:push_it} assume that $c$ is a transversal chain.

There are now three situations to consider:
\begin{compactitem}[$\bullet$]
\item If $c\in C_q(X\cap H;\FF_p (X\cap H))$, there is nothing to prove. 
\item Let $\sigma$ denote any facet in the support of $c$, and let $h$ denote the minimal face of $H$ that contains~$\sigma$. By transversality,
we can assume that $\sigma^\circ$ lies in the relative interior of the intersection of $h$ with a facet $S$ of $X$. Then $H' \defeq  H\cap \aff S$ is a chamber complex in $\aff S$. We need a simple lemma.

\begin{lem}\label{lem:M}
Let $\Sigma$ in $ \R^{m}$ be an $(m-1)$-dimensional fan whose complement contains no $j$-dimensional subspace. Then $\FF_p \Sigma= \FF_p  \R^{m}$ for $p\le m-j$. \qed
\end{lem}
 Therefore, if $h$ is of codimension greater than $p$ in $\mathbb{T}^d$, then, by above lemma,
$(\FF_p (H\cap S))_{|\sigma}=(\FF_p X)_{|\sigma}$. The framing of $c$ at $\sigma$ therefore lies in $\FF_p(X\cap H)$.
\item The plan is clear: we need to push $c$ homologously to faces of increasing codimension.
With $h$ a face of $H$ of codimension $\ell\le p$ in $\mathbb{T}^d$, we can write a chain $c_{|h}$ in  $C_q(X\cap h;\FF_{p} X)$ as 
\[c_{|h}\ =\ \underbrace{\sum_{\sigma:\, q\text{-cell}\,\rightarrow\, X} w^\sigma_{p-\ell} \wedge y^\sigma_{\ell}}_{\defeq  c_{|h,\ell}}\ +\ \underbrace{\sum_{\sigma:\, q\text{-cell}\,\rightarrow\, X} w^\sigma_{p}}_{\defeq  c_{|h,0}},\ \] 
where $w^\sigma_\alpha\in (\FF_{\alpha} (X\cap H))_{|\sigma}$ and $y_{\beta}^\sigma \in (\FF_{\beta}X)_{|\sigma}/(\FF_{\beta} (X\cap H))_{|\sigma}$. 
Then, by Theorem~\ref{mthm:AKN_trop}, 
\[
\wt{c}_{|h,\ell}\ \defeq \ \sum_{\sigma:\, q\text{-cell}\,\rightarrow\, X} w_{p-\ell}^\sigma\in C_q(X\cap h;\FF_{p-\ell} (X\cap H))
\]
is homologous to a chain $\wt{c}'_{|h,\ell}$ in $C_q(X\cap \partial h;\FF_{p-\ell}(X\cap H))$, as long as 
\[(p-\ell)+q\ \le \ =\ n-\ell-1.\]
We hence conclude that there exist chains 
${c}'_{|h,\ell}$ and ${c}''_{|h,0}$ in $C_q(X\cap \partial h;\FF_{p} X)$ and $C_q(X\cap h;\FF_{p} (X\cap H))$, respectively, whose sum is homologous to $c_{|h}$. \qedhere 
\end{compactitem}
 \end{proof}

\section{Remarks, examples and open problems.}\label{ssc:rem2}

\subsection{Theorems of Lefschetz type in tropical geometry.} 
A worthwhile long-range goal for this line of research could be 
to understand the Hard Lefschetz Theorem in the tropical setting, and there has been substantial progress on this front in the local case \cite{AHK}. It remains an open problem to formulate and prove the theorems of the Hard Lefschetz and Hodge type for ``K\"ahler'' tropical varieties, or even formalize the notion of K\"ahler structures for tropical varieties. 

\subsection{Lefschetz-type theorems for general tropical varieties}
It is not hard to see that the Lefschetz-type section theorems do not apply to general tropical varieties (which are defined as balanced complexes in a tropical space). In fact,
the Lefschetz property breaks down already for irreducible and balanced $2$-dimensional fans in $\RR^4$, which we will discuss here. Even stronger, such examples exist for $n$-dimensional irreducible and balanced fans in $\RR^d$ for every $d\ge 4$ and $n\le d-2$, for which we refer the reader to \cite{AB18}. This is also sharp, as we shall see below that the Lefschetz theorem applies for any other choice of parameters. 

Indeed,  
as in the construction of the Bergman fan, let $e_1,\cdots,e_5$ denote a generating system of $\mathbb{Z}^{4}\subset\RR^{4}$ such that $\sum_{i=1}^n e_i=0$. If $F$ is any subset of $[5]$, we define
$e_F\defeq \sum_{e_i\in F} e_i$. Consider now the $2$-dimensional fan on rays
\[e_1,\ e_2,\ e_3,\ e_4,\ e_5,\ e_{12}, \ e_{13},\ e_{23},\ e_{145},\ e_{245},\ e_{345},\ \text{and } -e_{45}\]
where we draw a $2$-dimensional cone for every inclusion relation between the atoms (that is, $\{1,2,3,4,5\}$) and subsets of cardinality at least two (that is, $\{12,13,23,145,245,345,45\}$). 

It is easy to see that this fan is balanced and connected. Moreover, if we restrict to the open halfspace with interior normal $-e_{45}=e_{123}$, the result is disconnected, violating the Lefschetz hypothesis. Finally, this disconnectedness is stable under small perturbations of the interior normal, so that even for general position hyperplanes, the Lefschetz hypothesis is false.

So, the intuition that the Lefschetz-type theorem might hold for general balanced complexes is wrong. The source of this confusion is the absence of counterexamples in dimension $\le 3$ and codimension $1$.

\begin{prp}
Let $X$ and $H$ denote two chamber complexes in $\RR^d$.
Then $X$ is, up to homotopy equivalence, obtained from $X\cap H$ by successively attaching $(d-1)$-dimensional cells.

Moreover, let $X$ be a graph in $\R^3$ balanced with positive weights, and
let  $H$ be a chamber complex. Then $X$ is, up to homotopy equivalence, obtained from $X\cap H$ by successively attaching $1$-dimensional cells.\qed
\end{prp}

\begin{proof}
Only the first case is not obvious. However, it can easily be proven using the same Morse-theoretic approach that we used for the Lefschetz theorem for smooth tropical varieties. The crucial ingredient here, the Morse data at critical points, is given by Lemma~\ref{lem:M}.
\end{proof}

\subsection{A stronger version of the Pushing Lemma?}

It is natural to wonder whether it is sufficient
in Lemma~\ref{mlem:push_it} to assume
only that $q\le n-1$ (as opposed to the more restrictive $p+q\le n-1$).
This would reconcile it with the Mikhalkin--Zharkov Transversality Lemma~\ref{lem:push_it}. However, this is not the case, as is shown by the following counterexample.

\begin{figure}[htb] 
\centering 
 \includegraphics[width=0.24\linewidth]{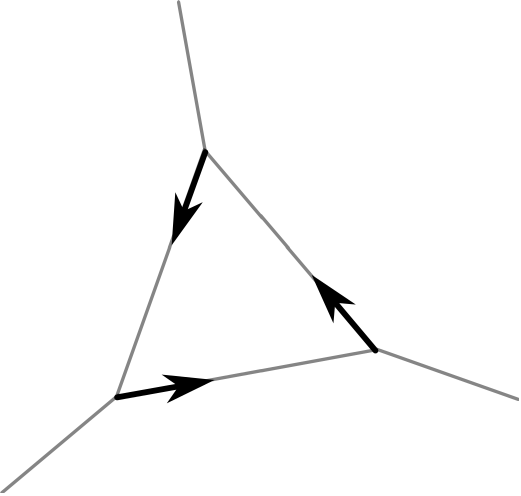} 
 \caption{\small A $(1,1)$-chain that cannot be pushed away from a singularity.}
 \label{fig:nontrivial}
\end{figure}

Consider the uniform matroid $U^3_4$ of rank $3$ on $4$ elements, let $\B$ denote the Bergman fan of $U^3_4$ spanned by a circuit $(e_i)_{i=1,\cdots,4}$ in $\R^3$. 
Choose a closed halfspace $H^+$ that contains $e_1$, $e_2$ and $e_3$ in the interior (and $\mbf{0}\in \partial H^+$), and consider 
the positive span of $e_i$: \ $\rho_i\defeq \pos e_i$ $(i\in \{1,2,3\})$. Let $\{1,2,3\}$ be ordered cyclically, and consider the $(1,1)$-chain
\[c\ \defeq \ \sum_{i=1}^3 ({e_{i-1}-e_{i}})\, \rho_i.\]
Clearly, if we consider the faces $\tau_{i,j}\defeq  \pos \{e_i,e_j\}$, $i\neq j\in \{1,2,3\}$, then a $(1,2)$-chain 
\[\gamma\ \defeq \ \sum_{i=1}^3 (a_i\cdot (e_i-e_{i+1})+b_i\cdot e_{i+1})\, \tau_{i,i+1},\]
in $H^+\cap \B$ with $\supp\ c-\partial \gamma \in \B\setminus\{\mathbf{0}\}$, must satisfy
\[b_i + a_{i+1}+b_{i+1}\, =\, 0\ \ \text{ and }\ \ a_{i+1}-b_{i+1}\, =\, 0\ \ \text{ and }\ \ a_i\, =\, 1\ \ \text{for all $i\in\{1,2,3\}$.}\]
This is inconsistent, cf.\  Figure~\ref{fig:nontrivial}.

\subsection{An integral tropical Kodaira--Spencer Theorem}\label{ssc:intHodge}

The $(p,q)$-homology theory was originally defined by Mikhalkin \cite{Shaw} 
 over the integer lattice.
The impact of this notion on the Lefschetz Theorem is briefly discussed here.

\begin{dfn}[Integral $p$-groups and integral $(p,q)$-groups] Let $\Sigma$ denote any polyhedral fan. For $p\ge 0$, we associate to $\Sigma$ the subgroup $(\FZ_p\Sigma)_{|\mathbf{0}}$ of $\bigwedge^p \mathbb{Z}^d$ generated by elements $v_1\wedge v_2\wedge \cdots \wedge v_p$ where $v_1,v_2,\cdots,v_p$ are integer vectors that lie in a common subspace $\lin{\sigma}, \sigma\in \Sigma$. The groups $(\FZ_p\Sigma)_{|\mathbf{0}}$ are also known as the \emph{integral $p$-groups} of $\Sigma$. 
	
	In analogy with the case of real Hodge groups, the integral $(p,q)$-groups are naturally the homology groups generated by the local system of coefficients $(\FZ_p\Sigma)_{|\mathbf{0}}$.
\end{dfn}

\begin{ex}
	For every simple matroid of rank at least $2$ on ground set $[n]$, we have that
	$(\FZ_1\B)_{|\mathbf{0}}\cong~\Z^{n-1}$.
\end{ex}

\begin{ex}[Orlik--Solomon algebras and integral $p$-groups]
	For every matroid $M$, we have a natural isomorphism between $\mc{OS}^\bullet(M)$, the projectivized Orlik--Solomon algebra of $M$, and $(\FZ^\bullet\B(M))_{|\mathbf{0}}$, the graded algebra of co-$p$-groups of the Bergman fan of $M$~\cite{zbMATH06400709}. 
\end{ex}

It is natural to ask whether the Kodaira--Spencer Lefschetz Section Theorem holds for integral $(p,q)$-groups. Consider this problem:

\begin{quote}
	\emph{Let $X$ denote any $n$-dimensional smooth tropical variety in $\TP^d$, and let $H\subset \TP^d$ denote a generic hyperplane. Is it true that the inclusion $X\cap H\hookrightarrow X$ induces an isomorphism of $(p,q)$-homology up to dimension $p+q\le n-2$, and a surjection in dimension $p+q=n-1$?}
\end{quote}

The answer is clearly negative, essentially because $X\cap H$ can be badly non-smooth. However, even if one gives strong assumptions on the intersection of $X$ and $H$, the problem fails to admit a positive answer, because the integral version of the Pushing Lemma is wrong.

\begin{prp}[Pushing chains fails for integral Hodge theory]\label{prp:pushing_direction}
	There exists a Bergman fan $\B$ of dimension $2$ (pointed at $\mbf{0}$) and a closed general position halfspace $H^+$ in general position w.r.t.\ $\B$ such that
	$(\FZ_1( \B\cap H^+))_{|\mathbf{0}}$ is a strict subgroup of $(\FZ_1 \B)_{|\mathbf{0}}$.
\end{prp}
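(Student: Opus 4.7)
My goal is to exhibit a concrete Bergman fan $\B$ of dimension $2$ together with a generic halfspace $H^+$ for which the inclusion $\FZ_1(\B\cap H^+)\hookrightarrow\FZ_1\B$ is strict, by pinpointing a single integer vector $v\in\FZ_1\B$ that fails to lie in $\FZ_1(\B\cap H^+)$. The strategy parallels the counterexample to the stronger pushing lemma sketched in the preceding subsection: I will arrange for a rank-$2$ flat $F$ of the underlying matroid to have all of its supporting rays strictly inside $H^-$, so that $F$'s entire contribution to $\FZ_1$ is excised from $\FZ_1(\B\cap H^+)$, and then I will isolate an integer vector whose absence cannot be compensated by any of the surviving rank-$2$ flats.

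First I fix a rank-$3$ matroid $M$ (so that $\dim\B=2$) together with an integer circuit $e_1,\dots,e_n\in\R^{n-1}$ satisfying $\sum_i e_i=0$, choosing the realization so that some distinguished rank-$2$ flat $F$ has the property that $\lin F\cap\Z^{n-1}$ is strictly larger than the sublattice $\sum_{F'\ne F}(\lin F\cap\lin F'\cap\Z^{n-1})$ generated by the intersections of $\lin F$ with the remaining rank-$2$ flats' spans. A natural candidate to try is $M=U^3_4$ with a suitably chosen integer circuit. I will then pick $n\in(\R^{n-1})^\ast$ defining $H^+=\{x:n\cdot x\ge 0\}$ so that every atom $a\in F$ sits strictly in $H^-$; by convexity of $H^-$, the flat ray $e_F=\sum_{a\in F}e_a$ lies in $H^-$ as well, so every $2$-cone $\pos\{e_a,e_F\}$ with $a\in F$ is contained in $H^-$ and drops out of $\B\cap H^+$. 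A small perturbation of $n$ will ensure that no other ray of $\B$ lands on $\partial H^+$, securing genericity, and $\lin F$ will then fail to be the linear span of any $2$-cone of $\B\cap H^+$.

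The crux is the lattice-theoretic verification that some $v\in\lin F\cap\Z^{n-1}$ cannot be expressed as a $\Z$-linear combination of integer vectors drawn from the spans $\lin F'$ of the surviving flats. Over the reals this would contradict the Bergman balancing identity $e_F=\sum_{a\in F}e_a$ combined with Lemma~\ref{lem:framing_half}, which in the real case lets one reroute any framing on $\lin F$ through the adjacent flats; however, the integer version of that rerouting generically forces fractional coefficients, and this is precisely the obstruction I plan to surface. The hardest step is therefore the accompanying index computation: one must compare the ambient lattice $\lin F\cap\Z^{n-1}$ with $\sum_{F'\ne F}(\lin F\cap\lin F'\cap\Z^{n-1})$ and verify that the inclusion is strict, which can be arranged by a well-chosen non-primitive feature of the circuit $(e_i)$ — for instance a flat ray $e_F$ whose primitive underlying direction lies outside $\sum_{F'\ne F}\lin F'$ yields the required index discrepancy. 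A vector $v$ in the complement of the recoverable sublattice then witnesses the strict inclusion $\FZ_1(\B\cap H^+)\subsetneq\FZ_1\B$ asserted by the proposition.
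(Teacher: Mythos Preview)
Your plan has a genuine gap: the matroid $U^3_4$ cannot serve as the desired example, no matter how you choose the halfspace or the integer circuit. For any generic weight $\omega$ with $\omega\cdot[4]=0$, some atom --- say $\{4\}$ --- satisfies $\omega_4>0$. Then the ray $e_4$ lies in $H^+$, and every rank-$2$ flat $\{i,4\}$ containing it contributes a $2$-cone to $\B\cap H^+$ whose linear span contains both $e_4$ and $e_i+e_4$. Hence $e_i=(e_i+e_4)-e_4\in\FZ_1(\B\cap H^+)$ for $i=1,2,3$, and since $\FZ_1\B$ is already generated by $e_1,\dots,e_4$ one obtains $\FZ_1(\B\cap H^+)=\FZ_1\B$. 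Your fallback of inserting a ``non-primitive feature of the circuit'' is also blocked: the paper's definition of the Bergman fan requires every proper subcollection of the $e_i$ to span the full integer lattice, so the circuit is forced to be unimodular, and the computation just given shows that even a non-unimodular choice would not help, since both sides equal the sublattice $\langle e_1,e_2,e_3,e_4\rangle$.

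The paper's proof instead takes $M$ to be the Fano plane, whose rank-$2$ flats are $3$-element lines with a built-in parity. With $\omega=(4,4,4,-3,-3,-3,-3)$ one gets $\LL^+=\{\{1\},\{2\},\{3\},\{1,2,3\}\}$, and the flats occurring as rays of cones meeting $H^+$ are precisely $\{1\},\{2\},\{3\}$ together with all seven lines; the atoms $\{4\},\{5\},\{6\},\{7\}$ never appear individually. Testing against $\vartheta=(0,0,0,1,1,1,1)$ shows $\vartheta\cdot e_S\equiv 0\pmod 2$ for every such $S$ (each line other than $\{1,2,3\}$ meets $\{4,5,6,7\}$ in exactly two points), while $\vartheta\cdot e_4=1$, giving an index-$2$ obstruction. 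The moral is that one needs a matroid whose lines are large enough to hide some atoms from the positive side while simultaneously obeying a congruence; $U^3_4$ is simply too small for this.
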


\begin{proof}
	Let $M$ denote the Fano plane on ground set $\{1,2,3,4,5,6,7\}$ and with rank 
	three flats
	\[\{\{1,2,3\},\{1,4,5\},\{1,6,7\}, \{2,4,6\},\{2,5,7\}, \{3,4,7\},\{3,5,6\}\}.\]
	Then $(\FZ_1 \B(M))_{|\mathbf{0}}= \mathbb{Z}^{6}$.
	Let $\omega=(4,4,4,-3,-3,-3,-3)$, and let 
	$H^+$ be the halfspace determined by the interior normal $\omega$.
	Then \[\mc{L}^{>0} (M)=\{\{1\},\{2\},\{3\},\{1,2,3\}\}.\]

	To compute $(\FZ_1 (\B\cap H^+))_{|\mathbf{0}}$ we now have to compute the integral span of 
	\[\{\sigma \in \B: \sigma \text{ incident to }\mr{Im}\,\mc{L}^{>0}(M)\subset\B \}.\]
	Now, every cone is spanned by the rays of the generating flats, so that
	$(\FZ_1 (\B\cap H^+))_{|\mathbf{0}}$ is generated by the flats
	\[G\ \defeq \ \mc{L}^{>0}(M) \cup \{\{1,4,5\},\{1,6,7\}, \{2,4,6\},\{2,5,7\}, \{3,4,7\},\{3,5,6\}\}.\]
	\begin{figure}[htb] 
		\centering 
		\includegraphics[width=0.23\linewidth]{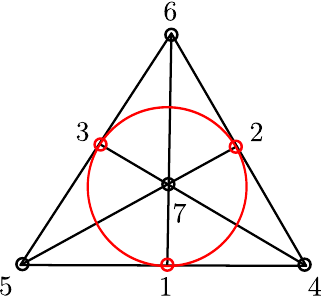} 
		\caption{\small The Pushing Lemma fails for the Fano Plane. Positive flats in red.} 
		\label{fig:Fano}
	\end{figure}
	
	Consider now the weight vector $\vartheta=(0,0,0,1,1,1,1)$. Then 
	\[\vartheta\cdot S\ =\ 0\pmod 2 \]
	for every $S\in G$. But, say, $\vartheta\cdot\{4\}=1$, so $(\FZ_1 (\B\cap H^+))_{|\mathbf{0}}$ is a strict subgroup (of index $2$) in $(\FZ_1 \B)_{|\mathbf{0}}$.
\end{proof}

We are left with the following problem:

\begin{prb}
Is the tropical integral Kodaira--Spencer theorem true for smooth limits of smooth complex projective varieties? 
\end{prb}

{\small 
\renewcommand\refname{\textbf{REFERENCES}}
\bibliographystyle{myamsalpha}
\bibliography{tropicallef}}

\providecommand{\noopsort}[1]{}\def\cprime{$'$}
\providecommand{\bysame}{\leavevmode\hbox to3em{\hrulefill}\thinspace}
\providecommand{\MR}{\relax\ifhmode\unskip\space\fi MR }
\providecommand{\MRhref}[2]{%
  \href{http://www.ams.org/mathscinet-getitem?mr=#1}{#2}
}
\providecommand{\href}[2]{#2}
\begin{thebibliography}{NGVR12}

\bibitem[{Adi}14]{A}
K.~{Adiprasito}, \emph{{Combinatorial stratifications and minimality of
  2-arrangements.}}, {J. Topol.} \textbf{7} (2014), no.~4, 1200--1220.

\bibitem[AHK15]{AHK}
K.~Adiprasito, J.~Huh, and E.~Katz, \emph{Hodge theory for combinatorial
  geometries}, preprint,
  \href{http://arxiv.org/abs/1511.02888}{arXiv:1511.02888}.

\bibitem[AB19]{AB18}
Karim {Adiprasito} and Farhad {Babaee}, \emph{{Convexity of complements of
  tropical varieties, and approximations of currents}}, {Math. Ann.}
  \textbf{373} (2019), no.~1-2, 237--251 (English).

\bibitem[AN54]{AN}
Y.~Akizuki and S.~Nakano, \emph{Note on {K}odaira-{S}pencer's proof of
  {L}efschetz theorems}, Proc. Japan Academy, Series A, Mathematical Sciences
  \textbf{30} (1954), no.~4, 266--272.

\bibitem[AF59]{AndreottiFrankel}
A.~Andreotti and T.~Frankel, \emph{The {L}efschetz theorem on hyperplane
  sections}, Ann. of Math. (2) \textbf{69} (1959), 713--717.

\bibitem[AK06]{ArdilaKlivans}
F.~Ardila and C.~J. Klivans, \emph{The {B}ergman complex of a matroid and
  phylogenetic trees}, J. Combin. Theory Ser. B \textbf{96} (2006), 38--49.

\bibitem[AMS12]{TGaIS}
C.~Athorne, D.~Maclagan, and I.~Strachan (eds.), \emph{Tropical geometry and
  integrable systems}, Contemporary Mathematics, vol. 580, Providence, RI,
  American Mathematical Society, 2012.

\bibitem[Ban67]{Banchoff}
T.~Banchoff, \emph{Critical points and curvature for embedded polyhedra},
  Journal of Differential Geometry \textbf{1} (1967), no.~3-4, 245--256.

\bibitem[Ber71]{Bergman}
G.~M. Bergman, \emph{The logarithmic limit-set of an algebraic variety}, Trans.
  Amer. Math. Soc. \textbf{157} (1971), 459--469.

\bibitem[BG84]{BieriGroves}
R.~Bieri and J.~R.~J. Groves, \emph{The geometry of the set of characters
  induced by valuations}, J. Reine Angew. Math. \textbf{347} (1984), 168--195.

\bibitem[Bj{\"o}80]{Bj1}
A.~Bj{\"o}rner, \emph{Shellable and {C}ohen-{M}acaulay partially ordered sets},
  Trans. Amer. Math. Soc. \textbf{260} (1980), 159--183.

\bibitem[Bj{\"o}92]{BjHomShell}
\bysame, \emph{The homology and shellability of matroids and geometric
  lattices}, Matroid applications, Encyclopedia Math. Appl., vol.~40, Cambridge
  Univ. Press, Cambridge, 1992, pp.~226--283.

\bibitem[Bj{\"o}95]{BTM}
\bysame, \emph{Topological methods}, Handbook of combinatorics, {V}ol.\ 1,\ 2,
  Elsevier, Amsterdam, 1995, pp.~1819--1872.

\bibitem[Bj{\"o}03]{NFH}
\bysame, \emph{Nerves, fibers and homotopy groups}, Journal of Combinatorial
  Theory, Series A \textbf{102} (2003), no.~1, 88 -- 93.

\bibitem[Bj{\"o}15]{Cortona}
\bysame, \emph{Positive sum systems}, Combinatorial Methods in Topology and
  Algebra, Springer, 2015, pp.~157--171.

\bibitem[BT09]{BjT}
A.~{Bj\"orner} and M.~{Tancer}, \emph{{Combinatorial Alexander duality -- a
  short and elementary proof.}}, {Discrete Comput. Geom.} \textbf{42} (2009),
  no.~4, 586--593.

\bibitem[BWW05]{BWW}
A.~{Bj\"orner}, M.~L. {Wachs}, and V.~{Welker}, \emph{Poset fiber theorems},
  Trans. Amer. Math. Soc. \textbf{357} (2005), 1877--1899.

\bibitem[BWW09]{BWW2}
\bysame, \emph{On sequentially {C}ohen-{M}acaulay complexes and posets.}, Isr.
  J. Math. \textbf{169} (2009), 295--316.

\bibitem[BZ92]{BjZie}
A.~Bj{\"o}rner and G.~M. Ziegler, \emph{Combinatorial stratification of complex
  arrangements}, J. Amer. Math. Soc. \textbf{5} (1992), 105--149.

\bibitem[Bot59]{Bott}
R.~Bott, \emph{On a theorem of {L}efschetz}, Michigan Math. J. \textbf{6}
  (1959), 211--216.

\bibitem[Bry86]{BT}
T.~Brylawski, \emph{Constructions}, Theory of matroids, Encyclopedia Math.
  Appl., vol.~26, Cambridge Univ. Press, Cambridge, 1986, pp.~127--223.

\bibitem[CLD12]{CLD}
A.~Chambert-Loir and A.~Ducros, \emph{{Formes diff\'erentielles r\'eelles et
  courants sur les espaces de Berkovich}}, preprint,
  \href{http://arxiv.org/abs/1204.6277}{arXiv:1204.6277}, 2012.

\bibitem[CDPR12]{CDPR}
F.~Cools, J.~Draisma, S.~Payne, and E.~Robeva, \emph{A tropical proof of the
  {B}rill-{N}oether theorem}, Adv. Math. \textbf{230} (2012), 759--776.

\bibitem[DP03]{DimcaPapadima}
A.~Dimca and S.~Papadima, \emph{Hypersurface complements, {M}ilnor fibers and
  higher homotopy groups of arrangements}, Ann. of Math. (2) \textbf{158}
  (2003), 473--507.

\bibitem[Fol66]{Folkman}
J.~Folkman, \emph{The homology groups of a lattice}, J. Math. Mech. \textbf{15}
  (1966), 631--636.

\bibitem[For98]{FormanADV}
R.~Forman, \emph{Morse theory for cell complexes}, Adv. Math. \textbf{134}
  (1998), 90--145.

\bibitem[Gat06]{Gathmann}
A.~Gathmann, \emph{Tropical algebraic geometry}, Jahresber. Deutsch.
  Math.-Verein. \textbf{108} (2006), 3--32.

\bibitem[GM88]{GM-SMT}
M.~Goresky and R.~D. MacPherson, \emph{Stratified {M}orse theory}, Ergebnisse
  der Mathematik und ihrer Grenzgebiete (3) [Results in Mathematics and Related
  Areas (3)], vol.~14, Springer-Verlag, Berlin, 1988.

\bibitem[Gro11]{Gross}
M.~Gross, \emph{Tropical {G}eometry and {M}irror {S}ymmetry}, CBMS Regional
  Conference Series in Mathematics, vol. 114, Published for the Conference
  Board of the Mathematical Sciences, Washington, DC, 2011.

\bibitem[Gro95]{MP}
Max-Plus~Working Group, \emph{Max-plus algebra and applications to system
  theory and optimal control}, Proc. {I}{C}{M}, {V}ol.\ 1, 2 ({Z}\"urich, 1994)
  (Basel), Birkh\"auser, 1995, pp.~1511--1522.

\bibitem[Gub07]{Gubler}
W.~Gubler, \emph{The {B}ogomolov conjecture for totally degenerate abelian
  varieties}, Invent. Math. \textbf{169} (2007), 377--400.

\bibitem[HL71]{HammLe}
H.~Hamm and D.~T. L{\^e}, \emph{Un th\'eor\`eme du type de {L}efschetz}, C. R.
  Acad. Sci. Paris S\'er. A-B \textbf{272} (1971), A946--A949.

\bibitem[Hat02]{Hatcher}
A.~Hatcher, \emph{Algebraic {T}opology}, Cambridge University Press, Cambridge,
  2002, available at
  \href{http://www.math.cornell.edu/~hatcher/AT/ATpage.html}{math.cornell.edu/\string~hatcher/AT/ATpage}.

\bibitem[HL16]{HrL}
E.~{Hrushovski} and F.~{Loeser}, \emph{{Non-archimedean tame topology and
  stably dominated types.}}, Princeton, NJ: Princeton University Press, 2016.

\bibitem[IKMZ16]{IKMZ}
I.~Itenberg, L.~Katzarkov, G.~Mikhalkin, and I.~Zharkov, \emph{Tropical
  homology}, preprint,
  \href{http://arxiv.org/abs/1604.01838}{arXiv:1604.01838}, 2016.

\bibitem[KS53]{KS}
K.~Kodaira and D.~C. Spencer, \emph{On a theorem of {L}efschetz and the lemma
  of {E}nriques-{S}everi-{Z}ariski}, Proc. Nat. Acad Sci. U. S. A. \textbf{39}
  (1953), 1273--1278.

\bibitem[KS01]{KoSo}
M.~Kontsevich and Y.~Soibelman, \emph{Homological mirror symmetry and torus
  fibrations}, Symplectic geometry and mirror symmetry ({S}eoul, 2000), World
  Sci. Publ., River Edge, NJ, 2001, pp.~203--263.

\bibitem[KT02]{KT}
M.~Kontsevich and Y.~Tschinkel, \emph{Non-archimedean {K}{\"a}hler geometry},
  IHES Preprints (2002),
  \url{ihes.fr/maxim/TEXTS/Non-archimedean_Kahler_geometry.pdf}.

\bibitem[K{\"u}h95]{Kuehnel}
W.~K{\"u}hnel, \emph{Tight {P}olyhedral {S}ubmanifolds and {T}ight
  {T}riangulations}, Lecture Notes in Mathematics, vol. 1612, Springer-Verlag,
  Berlin, 1995.

\bibitem[{Kun}79]{Kung}
J.~P.~S. {Kung}, \emph{{The Radon transforms of a combinatorial geometry, I.}},
  {J. Comb. Theory, Ser. A} \textbf{26} (1979), 97--102.

\bibitem[Laz04]{Lazarsfeld}
R.~Lazarsfeld, \emph{Positivity in {A}lgebraic {G}eometry. {I}}, Ergebnisse der
  Mathematik und ihrer Grenzgebiete. 3. Modern Surveys in Mathematics, vol.~48,
  Springer-Verlag, Berlin, 2004, Classical setting: line bundles and linear
  series.

\bibitem[Lef50]{Lefschetz}
S.~Lefschetz, \emph{L'analysis situs et la g\'eom\'etrie alg\'ebrique},
  Gauthier-Villars, Paris, 1950.

\bibitem[MS15]{ms}
D.~{Maclagan} and B.~{Sturmfels}, \emph{{Introduction to tropical geometry.}},
  Providence, RI: American Mathematical Society (AMS), 2015.

\bibitem[Mik05]{MG}
G.~Mikhalkin, \emph{Enumerative tropical algebraic geometry in {$\RR^2$}}, J.
  Amer. Math. Soc. \textbf{18} (2005), no.~2, 313--377.

\bibitem[Mik06]{MG2}
\bysame, \emph{Tropical geometry and its applications}, International
  {C}ongress of {M}athematicians. {V}ol. {II}, Eur. Math. Soc., Z\"urich, 2006,
  pp.~827--852.

\bibitem[MZ14]{MZEigenwave}
G.~Mikhalkin and I.~Zharkov, \emph{Tropical eigenwave and intermediate
  {J}acobians}, Homological Mirror Symmetry and Tropical Geometry
  (R.~Castano-Bernard, F.~Catanese, M.~Kontsevich, T.~Pantev, Y.~Soibelman, and
  I.~Zharkov, eds.), Lecture Notes of the Unione Matematica Italiana, vol.~15,
  Springer International Publishing, 2014, pp.~309--349.

\bibitem[MZ08]{MZ}
G.~Mikhalkin and G.~M. Ziegler, \emph{Connectivity of weighted rank $3$
  matroids}, {Oberwolfach Rep.} \textbf{5} (2008), no.~4, 2477--2556, {Problem
  Session edited by J. Pach}.

\bibitem[Mil63]{Milnor}
J.~Milnor, \emph{Morse {T}heory}, Based on lecture notes by M. Spivak and R.
  Wells. Annals of Mathematics Studies, No. 51, Princeton University Press,
  Princeton, N.J., 1963.

\bibitem[Mun84]{munkres}
J.~R. Munkres, \emph{Elements of {A}lgebraic {T}opology}, Addison-Wesley, Menlo
  Park, CA, 1984.

\bibitem[NGVR12]{TGI}
V.~Noel, D.~Grigoriev, S.~Vakulenko, and O.~Radulescu, \emph{Tropical
  geometries and dynamics of biochemical networks application to hybrid cell
  cycle models}, Proc. 2nd {I}nternational {W}orkshop on {S}tatic {A}nalysis
  and {S}ystems {B}iology ({SASB} 2011), Electron. Notes Theor. Comput. Sci.,
  vol. 284, Elsevier Sci. B. V., Amsterdam, 2012, pp.~75--91.

\bibitem[Oxl11]{Oxley}
J.~Oxley, \emph{Matroid {T}heory}, second ed., Oxford Graduate Texts in
  Mathematics, vol.~21, Oxford University Press, Oxford, 2011.

\bibitem[PZ08]{PZ}
R.~Pinchasi and G.~M. Ziegler, personal communication, 2008.

\bibitem[Qui78]{quillen}
D.~Quillen, \emph{Homotopy properties of the poset of nontrivial $p$-subgroups
  of a group}, Advances in Mathematics \textbf{28} (1978), 101--128.

\bibitem[Ran02]{Randell}
R.~Randell, \emph{Morse theory, {M}ilnor fibers and minimality of hyperplane
  arrangements}, Proc. Amer. Math. Soc. \textbf{130} (2002), 2737--2743.

\bibitem[RGST05]{RGST}
J.~Richter-Gebert, B.~Sturmfels, and T.~Theobald, \emph{First steps in tropical
  geometry}, Idempotent mathematics and mathematical physics, Contemp. Math.,
  vol. 377, Amer. Math. Soc., Providence, RI, 2005, pp.~289--317.

\bibitem[Rot64]{Rota}
G.-C. Rota, \emph{On the foundations of combinatorial theory. {I}. {T}heory of
  {M}\"obius functions}, Z. Wahrscheinlichkeitstheorie und Verw. Gebiete
  \textbf{2} (1964), 340--368 (1964).

\bibitem[Sal87]{Salvetti}
M.~Salvetti, \emph{Topology of the complement of real hyperplanes in
  $\mathbb{C}^n$}, Invent. Math. \textbf{88} (1987), 603--618.

\bibitem[Sha11]{Shaw}
K.~M. Shaw, \emph{Tropical intersection theory and surfaces}, 2011, Thesis
  (Ph.D.)--Universit\'e de Gen\`eve.

\bibitem[Shu05]{Shustin}
E.~Shustin, \emph{A tropical approach to enumerative geometry}, Algebra i
  Analiz \textbf{17} (2005), no.~2, 170--214.

\bibitem[Spe05]{Speyer}
D.~E. Speyer, \emph{Tropical {G}eometry}, ProQuest LLC, Ann Arbor, MI, 2005,
  Thesis (Ph.D.)--University of California, Berkeley.

\bibitem[SS09]{SpeyerSturmfels}
D.~E. Speyer and B.~Sturmfels, \emph{Tropical mathematics}, Math. Mag.
  \textbf{82} (2009), 163--173.

\bibitem[Sta96]{SCCA}
R.~P. Stanley, \emph{Combinatorics and {C}ommutative {A}lgebra}, second ed.,
  Progress in Mathematics, vol.~41, Birkh\"auser Boston Inc., Boston, MA, 1996.

\bibitem[Stu02]{SturmfelsBergman}
B.~Sturmfels, \emph{Solving {S}ystems of {P}olynomial {E}quations}, CBMS
  Regional Conference Series in Mathematics, vol.~97, Published for the
  Conference Board of the Mathematical Sciences, Washington, DC, 2002.

\bibitem[Voi02]{Voisin}
C.~Voisin, \emph{Th\'eorie de {H}odge et {G}\'eom\'etrie {A}lg\'ebrique
  {C}omplexe}, Cours Sp\'ecialis\'es [Specialized Courses], vol.~10,
  Soci\'et\'e Math\'ematique de France, Paris, 2002.

\bibitem[WW86]{WachsWalker}
M.~L. Wachs and J.~W. Walker, \emph{On geometric semilattices}, Order
  \textbf{2} (1986), no.~4, 367--385.

\bibitem[Whi78]{Whitehead}
G.~W. Whitehead, \emph{Elements of {H}omotopy {T}heory}, Graduate Texts in
  Mathematics, vol.~61, Springer-Verlag, New York, 1978.

\bibitem[{Zha}13]{zbMATH06400709}
I.~{Zharkov}, \emph{{The Orlik-Solomon algebra and the Bergman fan of a
  matroid.}}, {J. G\"okova Geom. Topol. GGT} \textbf{7} (2013), 25--31.

\end{thebibliography}
\end{document}